\documentclass[10pt, a4paper]{amsart}
\usepackage[margin=3cm]{geometry}

\usepackage{amsmath}
\usepackage{dsfont}
\usepackage{mathrsfs}
\usepackage{amssymb}
\usepackage{hanging}
\usepackage{latexsym,amsfonts,amsthm}

\usepackage[OT2,OT1]{fontenc}
\newcommand\cyr
{
\renewcommand\rmdefault{wncyr}
\renewcommand\sfdefault{wncyss}
\renewcommand\encodingdefault{OT2}
\normalfont
\selectfont
}
\DeclareTextFontCommand{\textcyr}{\cyr}

\usepackage{color}
\usepackage{tikz}
\usepackage{multirow}

\usetikzlibrary{shapes, decorations, arrows, calc, arrows.meta, fit, positioning}
\tikzset{
    -Latex, auto, node distance = 1 cm and 1 cm, semithick,
    state/.style = {ellipse, draw, minimum width = 0.7 cm},
    point/.style = {circle, draw, inner sep = 0.04cm, fill, node contents = {}},
    bidirected/.style = {Latex-Latex, dashed},
    el/.style = {inner sep = 2pt, align = left, sloped}
}

\usepackage{systeme}
\usepackage{tabularx}
\usepackage{tabulary}
\usepackage{float}
\usepackage{subcaption}
\usepackage{mathtools}
\usepackage{stmaryrd}
\usepackage{mathabx}
\usepackage{url}
\usepackage{relsize}
\usepackage[hidelinks]{hyperref}
\usepackage{diagbox}
\usepackage{bm}
\usepackage{xcolor}

\def\jump#1{[\hspace{-2pt}[#1]\hspace{-2pt}]}

\newcommand{\bn}{{\bm n}}
\newcommand{\bt}{{\bm t}}

\newcommand{\bv}{{\bm v}}
\newcommand{\bp}{{\bm p}}

\newcommand{\bbR}{\mathbb{R}}

\newcommand{\p}{\partial}
\newcommand{\nab}{\nabla}

\newcommand{\bmu}{{\bm \mu}}
\newcommand{\calT}{\mathcal{T}}
\newcommand{\calE}{\mathcal{E}}

\newcommand{\calF}{\mathcal{F}}
\newcommand{\calK}{\mathcal{K}}

\newcommand{\calO}{\mathcal{O}}
\newcommand{\calX}{\mathcal{X}}
\newcommand{\calP}{\mathcal{P}}

\newcommand{\Grad}{\nabla\!\!\!\!\nabla}
\newcommand{\hesstwo}{\nab_\Gamma^2}
\newcommand{\bg}{{\bm g}}

\newcommand{\bnu}{{\bm \nu}}
\newcommand{\bchi}{{\bm \chi}}


\def\jump#1{[\hspace{-2pt}[#1]\hspace{-2pt}]}


\newtheorem{theorem}{Theorem}[section]
\newtheorem{lemma}[theorem]{Lemma}

\newtheorem{corollary}[theorem]{Corollary}

\theoremstyle{definition}

\theoremstyle{remark}
\newtheorem{remark}[theorem]{Remark}
\theoremstyle{example}

\allowdisplaybreaks[3]
\numberwithin{equation}{section}

\newcommand{\tbar}[1]{{\left\vert\kern-0.25ex\left\vert\kern-0.25ex\left\vert #1 
    \right\vert\kern-0.25ex\right\vert\kern-0.25ex\right\vert}}

\begin{document}

\parindent=0in

\title[A TraceFEM $C^0$ IP Method for the Surface Biharmonic]{A TraceFEM $C^0$ Interior Penalty Method for the Surface Biharmonic Equation}

\author[M.~Neilan]{Michael Neilan}
\address{Department of Mathematics, University of Pittsburgh, Pittsburgh, PA 15260 USA}
\email{neilan@pitt.edu}

\author[H.~Wan]{Hongzhi Wan}
\address{Department of Mathematics, University of Pittsburgh, Pittsburgh, PA 15260 USA}
\email{how41@pitt.edu}

\thanks{The authors were supported in part through the NSF, grant DMS-2309425.}
\date{}
\maketitle

\begin{abstract}
We construct and analyze a  TraceFEM discretization for the surface biharmonic problem.
The method utilizes standard quadratic Lagrange finite element spaces defined on a three-dimensional background mesh and a symmetric $C^0$ interior penalty formulation posed on a second-order polyhedral approximation of the surface.
Stability is achieved through a combination of surface edge penalties and bulk-facet penalization of gradient and Hessian jumps. 
We prove optimal first-order convergence in a discrete $H^2$ norm
and quadratic convergence in the $L^2$ norm. 
\end{abstract}

\thispagestyle{empty}

\section{Introduction}

TraceFEM has become a widely used unfitted 
    method for surface PDEs due to its conceptual simplicity
    and robustness with respect to surface geometry. 
    In contrast to surface finite element methods (SFEMs), which 
    build discrete spaces directly on a surface triangulation, TraceFEM
    employs standard volumetric finite element spaces defined on a fixed
    three-dimensional background mesh and poses the discrete formulation
on the induced two-dimensional surface mesh.
    This approach enables
the use of conventional finite element spaces, and it is particularly well-suited
for problems involving evolving surfaces as well as problems with coupling conditions
with the surrounding bulk. This framework is now relatively established and has been 
successfully applied for second-order  elliptic problems, as well
as for the surface Navier-Stokes equations (cf., e.g., \cite{OlshanskiiEtal09,OlshanskiiEtal24,BurmanEtal15,LarsonZahedi20}).

Although several TraceFEM (and SFEM) discretizations have been
developed in the past few decades, substantially fewer
results exist for finite element methods for fourth-order surface PDEs.
Existing contributions include 
 SFEMs based on $C^0$ interior penalty (IP) methods
in \cite{LarssonLarson17,neilan2025c0}, non-conforming SFEMs using
Zienkiewicz-type  elements \cite{WuZhou25}, and SFEMs built
on either Ciarlet-Raviart or Hellan–Herrmann–Johnson mixed formulations
\cite{NitschkeEtal12,BranderEtAl22,Walker22,SunEtal20}.
In contrast, to the best of our knowledge, the only TraceFEM discretization for fourth-order
surface PDEs is given in 
 \cite{reusken2020stream,BranderReusken20} which proposes and 
 analyzes TraceFEM discretizations for the surface stream function
 formulation of the Stokes problem, a fourth-order PDE whose
 principal operator is the surface biharmonic. 
Their method introduces a Ciarlet-Raviart-type mixed finite
element formulation, where the Laplace-Beltrami of the solution is introduced
as an additional unknown. While this construction is relatively straightforward,
using standard Lagrange finite element spaces, it increases
the number of degrees of freedom and leads to a relatively
large saddle-point system.

In this paper, we introduce and analyze a TraceFEM for the surface biharmonic
problem based on a symmetric $C^0$ IP formulation \cite{BrennerSung05,EngelEtal02}.
The discrete space is the standard 
 quadratic Lagrange finite element space defined on a three-dimensional
 background mesh, with consistency enforced through edge-based integral terms.
 In addition to penalization of (surface) gradients over surface edges, analogous to the Euclidean setting, the proposed method also includes penalization 
    of the gradient and Hessian operators across facets in the background mesh. We show
    that these bulk-penalty terms ensure stability of the method, independent
    to how the mesh intersects the  surface.
We prove optimal first-order convergence in a discrete 
$H^2$ norm and quadratic convergence in the $L^2$ norm, with both
estimates taking geometric errors into account.

The proposed method offers several advantages, foremost is
its simplicity:  similar to  \cite{reusken2020stream,BranderReusken20},
it uses standard Lagrange finite element spaces which 
are supported in existing software programs (e.g., NGSolve \cite{ngsolve}).
In addition, the proposed scheme is a primal formulation involving a single unknown.
The resulting algebraic system is symmetric positive definite, in line
with the underlying structure of the elliptic PDE. 
As far as we are 
aware, this is the first primal TraceFEM for fourth-order surface PDEs.

The rest of the paper is organized as follows.
Section \ref{sec-biharmonic} introduces 
the relevant surface differential operators as well as
the surface biharmonic problem. Section 
\ref{sec-prelim} describes the surface approximation, triangulations, scaling estimates
and norm equivalences with respect to  the exact and discrete surfaces. 
In Section \ref{sec-Method}, we present the $C^0$ IP method and establish
coercivity, proving that the discrete problem is well posed.
Section \ref{sec-converge} gives the error analysis,
including optimal first-order convergence 
in a discrete $H^2$ norm and second-order $L^2$ estimates
obtained via a duality argument.
Finally, in Section \ref{sec-numerics} we perform
some numerical experiments which support
the theoretical results.

\section{The surface biharmonic problem}\label{sec-biharmonic}

We assume $\Gamma$ is a simply connected compact oriented smooth surface without boundary in $\bbR^3$. We assume that $\Gamma$ is the level set of a smooth function 
$\phi$, i.e., $\Gamma = \{x\in \bbR^3:\ \phi(x) = 0\}$.
Let $U_{\delta}(\Gamma) = \{x \in \bbR^3: {\rm dist}(x, \Gamma) < \delta\}$ be a neighborhood
of $\Gamma$ with $\delta > 0$ sufficiently small such that the signed distance function $d$ (with $d < 0$ in the interior of $\Gamma$) is well defined. Set $\bn = \nab d$, a column vector,
so that $\bn|_{\Gamma}$ is the outward unit normal of $\Gamma$. We let ${\bf P}$ be the tangential projection, i.e.,
\[
{\bf P} = {\bf I} - \bn \otimes \bn,
\]
where ${\bf I}$ denotes the $3\times 3$ identity matrix.
The closest point projection $\bp:U_\delta\to \Gamma$ is
\[
\bp(x) = x - d(x)\bn(x).
\]
The Weingarten map is denoted by ${\bf H} = \nab^2 d$,
and the principal curvatures of $\Gamma$ are denoted by $\kappa_1$ and $\kappa_2$.

For a function $\psi$ defined on the surface $\Gamma$, its extension
$\psi^e: U_\delta \rightarrow \bbR$ is given by
\[
\psi^e = \psi \circ \bp.
\]
On the other hand, a function $\psi$ defined
on some $\tilde \Gamma \subset U_{\delta}(\Gamma)$,
can be lifted to $\Gamma$ via
\[
\psi^\ell \circ \bp = \psi\quad\text{on }\tilde \Gamma.
\]
We will often drop the subscripts $e$ and $\ell$ for the extensions and lifts
if the context is clear.

The surface gradient of a scalar function
$\psi:\Gamma\to \bbR$ is given by
\[
\nab_\Gamma\psi = {\bf P}\nab\psi^e.
\]
For a vector-valued function $\bv = [v_1, v_2, v_3]^\intercal$, 
the extension is defined componentwise so that $(\bv^e)_j = v_j^e$ and its
Jacobian $\Grad\bv$ satisfies $(\Grad\bv)_{i, j} = \frac{\p v_i}{\p x_j}$ for $i, j = 1, 2, 3$. The surface Jacobian and the surface divergence are as follows:
\[
\Grad_\Gamma\bv = {\bf P}\Grad\bv^e{\bf P}, \qquad {\rm div}_{\Gamma}\bv = {\rm tr}(\Grad_\Gamma \bv).
\]
Combining the above definitions, we define the projected surface Hessian $\hesstwo\psi: \Gamma \rightarrow \bbR^{3\times 3}$
and Laplace-Beltrami operator $\Delta_\Gamma \psi:\Gamma \to \bbR$ as
\[
\nabla_{\Gamma}^{2}\psi = \Grad_{\Gamma}\nabla_{\Gamma}\psi,\quad\Delta_\Gamma\psi = {\rm div}_{\Gamma}\nab_{\Gamma}\psi = {\rm tr}(\nab_\Gamma^2\psi).
\]

Given a source function $f:\Gamma\to \bbR$ 
with vanishing mean value $\int_\Gamma f = 0$, we consider
the problem of finding  $u: \Gamma \rightarrow \bbR^3$ satisfying $\int_\Gamma u = 0$ such that
\begin{equation} \label{eqn:SBiharmonicProblem}
\Delta_\Gamma^2u
= f \quad \text{on } \Gamma,
\end{equation}
where  $\Delta_\Gamma^2u := \Delta_\Gamma(\Delta_\Gamma u)$.


\section{Preliminaries}\label{sec-prelim}

\subsection{Approximate Geometries and Meshes}
Let $\calT^{\rm ext}_h$ be a quasi-uniform mesh of shape-regular simplices of a polygonal domain containing $U_{\delta}(\Gamma)$ with $h \approx {\rm diam}(T)\ \forall T\in \calT^{ext}$. Let $\phi_h$ be the  piecewise linear Lagrange interpolant
of the level set $\phi$, and let $\Gamma_h = \{x\in \bbR^3:\ \phi_h(x)=0\}$
be a polyhedral approximation to $\Gamma$. By standard approximation
properties of the Lagrange interpolant, we have
\begin{equation}\label{eqn:GeomApprox}
\|d\|_{L^\infty(\Gamma_h)} \lesssim h^{2}, \qquad \|\bn - \bn_h\|_{L^\infty(\Gamma_h)} \lesssim h,
\end{equation}
where $\bn_h$ is the outward unit normal of $\Gamma_h$. Here, we use the notation $A \lesssim B$ to mean $A \le cB$ for some constant $c > 0$ independent of $h$ as well as $A \approx B$ to mean $A \lesssim B$ and $B \lesssim A$. The constant $c$ is also
independent on how the surface $\Gamma_h$ intersects 
the background mesh $\calT^{\rm ext}_h$.
Above and throughout this paper, 
we assume $h$ is sufficiently small so that $\Gamma_h\subset U_\delta$, in particular,
the closest point projection is well defined on $\Gamma_h$.
We define the tangential projection with respect
to $\Gamma_h$ as ${\bf P}_h = {\bf I}- \bn_h\otimes \bn_h$
and note that, due to \eqref{eqn:GeomApprox},
\begin{equation}\label{eqn:PPhDiff}
\|{\bf P}-{\bf P}_h\|_{L^\infty(\Gamma_h)}\lesssim h.
\end{equation}

 We define the active mesh as
the set of simplices in $\calT_h^{\rm ext}$ intersecting the approximate surface $\Gamma_h$:
\[
\calT_h = \{T \in \calT^{\rm ext}_h: T \cap \Gamma_h \neq \varnothing\},\qquad
\Omega_h = \bigcup_{T\in \calT_h} T.
\]
Let $\calF_h$ denote the set of two-dimensional faces in $\calT_h$, and define
the induced triangulation of $\Gamma_h$ as
\begin{align*}
\calK_h
& = \{K = T \cap \Gamma_h: T \in \calT_h\}.
\end{align*}
The set of one-dimensional edges in $\calK_h$ is denoted by $\calE_h$, i.e.,
\begin{align*}
\calE_h
& = \{E = \partial K^+ \cap \partial K^-:\ K^+\neq K^-,\ K^+, K^- \in \calK_h\},
\end{align*}
where $\partial K^\pm$ denote the boundaries of $K^{\pm}$. 
If $F = \p T^+\cap \p T^-\in \calF_h$, then we use the notation
$\bnu_F^{\pm}$ for the outward unit normal of $\p T^{\pm}$ restricted to $F$.
Likewise, for $E = \p K^+\cap \p K^-$, $\bmu_E^{\pm}$ is the unit co-normal of $\p K^{\pm}$
restricted to $E$. We shall use the convention $\bnu_F = \bnu_F^+$ and $\bmu_E = (\bmu_E^+-\bmu^-_E)/(1-\bmu^+_E\cdot \bmu^-_E)$ (cf.~\cite{LarssonLarson17}) throughout the paper.
We also let $\bt_E$ denote a unit vector tangent to the edge $E$,
and set $\bmu_h:\calE_h\to \bbR^3$ such that $\bmu_h|_E = \bmu_E$
for all $E\in \calE_h$.

Analogous mesh quantities under
the image of the closest point projection
are denoted by the superscript $\ell$. For example,
\[
\calK_h^\ell = \{\bp(K):\ K\in \calK_h\}
\]
is the induced surface triangulation on the exact surface $\Gamma$,
and $\calE_h^\ell = \{\bp(E):\ E\in \calE_h\}$ is the set of edges
of $\calK_h^\ell$. We use the notation $K^\ell = \bp(K)$ 
and $E^\ell = \bp(E)$ for $K\in \calK_h$ and $E\in \calE_h$,
 let $\bmu_{E^\ell}$ be the unit co-normal of $E^\ell$,
 and set $\bmu_\ell:\calE_h^\ell\to \bbR^3$ such that $\bmu_\ell|_{E^\ell} = \bmu_{E^\ell}$.
Note that $\bmu_{E^\ell} \neq \bmu^\ell_E:=\bp(\bmu_E)$ in general.

On an edge $E = \p K^+\cap \p K^-\in \calE_h$
and face $F = \p T^+\cap \p T^-\in \calF_h$, we define the jump 
of functions $v$ and $w$ defined on $\Gamma_h$ and $\Omega_h$ respectively as
\begin{align*}
[v]|_E = v^+|_E-v^-|_E,\qquad [w]|_F = w^+|_F-w^-|_F,
\end{align*}
where $v^{\pm}$ and $w^{\pm}$ are the restrictions of $v$ and $w$ to $K^{\pm}$ and $T^{\pm}$, respectively.
Note that, by the definition of $\bmu_E$, $[\nab_{\Gamma_h} v] \cdot \bmu_E|_E = \nab_{\Gamma_h} v^+\cdot \bmu_E^+|_E+\nab_{\Gamma_h} v^-\cdot \bmu_E^-|_E$.
The averages of $v$ and $w$ across $E$ and $F$ are respectively given by
\begin{align*}
\{v\}|_E = \frac12 (v^+ +v^-)|_E,\qquad \{w\}_F = \frac12 (w^++ w^-)|_F.    
\end{align*}

Throughout this work, we use the notation
$(\cdot,\cdot)_D$ and $\|\cdot\|_D$ to denote the $L^2$-inner product
and $L^2$-norm with respect to the domain $D\subset \bbR^d\ (d=2,3)$.
For a set of geometric quantities $\calP_h$, we let $(\cdot,\cdot)_{\calP_h}
 = \sum_{P\in \calP_h} (\cdot,\cdot)_P$ 
denote the piecewise $L^2$-inner product with respect to $\calP_h$.

For a scalar-valued function $u: \Gamma \rightarrow \bbR$, 
its $W^{m,p}$ surface Sobolev norm is given by ($m\in \mathbb{N}_0,\ 1\le p<\infty)$
\[
\|u\|_{W^{m, p}(\Gamma)}^p
= \sum_{k = 0}^m\|\nab_\Gamma^ku\|_{L^p(\Gamma)}^p.
\]
For vector-valued function $\bchi: \Gamma \rightarrow \bbR^3$ with $\bchi = (\chi_1,\chi_2,\chi_3)^\intercal$, we set
\[
\|\bchi\|_{W^{m, p}(\Gamma)}^p
= \sum_{j = 1}^3\|\chi_j\|_{W^{m, p}(\Gamma)}^p.
\]
We  denote the piecewise $W^{m,p}$-space over a 
set of geometric entities $\calP_h$ as
\[
W^{m,p}(\calP_h) = \prod_{P\in \calP_h} W^{m,p}(P),
\]
with corresponding norm
$\|\cdot\|_{W^{m,p}(\calP_h)}^p = \sum_{P \in \calP_h}\|\cdot\|_{W^{m,p}(P)}^p$.
Similarly, $\|\cdot\|_{W^{m,p}(\partial\calP_h)}^p = \sum_{P \in \calP_h}\|\cdot\|_{W^{m,p}(\partial P)}^p$. 
This notation is also adopted for semi-norms, and we use the standard conventions $H^m = W^{m,2}$ and $W^{0,p} = L^p$. Furthermore, in the case $m=0$ and $p=2$, we write
$\|\cdot\|_{\calP_h} = \|\cdot\|_{L^2(\calP_h)}$
and $\|\cdot\|_{\p\calP_h} = \|\cdot\|_{L^2(\p \calP_h)}$

%

For  $\tau > 0$ and a point $x$ on $\Gamma$, we set $D_\tau(x) =\{y \in \Gamma: |x - y| < \tau\}$,
and
\[
\calK_{\tau, x} = \{K \in \calK_h: K^\ell \cap D_\tau(x) \neq \varnothing\}, \quad \calT_{\tau, x} = \{T \in \calT_h: T \cap \Gamma_h \in \calK_{\tau, x}\}.
\]

Note that the surface mesh $\calK_h$ can
have arbitrarily small elements and is generally not 
shape regular. Nonetheless, the following result from (cf. \cite[(5.1)--(5.4)]{burman2017cut}), essentially states that, for a triangle
in $\calK_h$ with a large aspect ratio, there exists 
a nearby triangle that is shape-regular.
\begin{lemma}[Large Intersection Covering Property]
There exists a set of points $\calX_h\subset \Gamma$ 
and constant $c>0$ such that
$\{\calK_{ch, x}, x \in \calX_h\}$ and $\{\calT_{c h, x}, x \in \calX_h\}$ are coverings of $\calK_h$ and $\calT_h$, respectively, with the following properties for $h$ sufficiently small :
\begin{itemize}
\item The number of sets containing a given point $y$ is uniformly bounded,
\[
\#\{x \in \calX_h: y \in \calT_{ch, x}\} \lesssim 1 \qquad \forall y \in \bbR^3.
\]

\item The number of elements in each set $\calT_{ch, x}$ is uniformly bounded,
\[
\#\calT_{ch, x} \lesssim 1 \qquad \forall x \in \calX_h,
\]
and each element in $\calT_{ch, x}$ shares at least one face with another element in $\calT_{ch, x}$.

\item For all $x \in \calX_h$ there exists an element $T_x \in \calT_{ch, x}$ that has a large intersection with $\Gamma_h$ in the sense that
\begin{align}
|T_x|
& \sim h|T_x \cap \Gamma_h| = h|K_x| \qquad \forall x \in \calX_h. \label{large-intersect}
\end{align}
\end{itemize}
\end{lemma}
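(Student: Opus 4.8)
The plan is to follow the covering/packing strategy of \cite{burman2017cut}, constructing $\calX_h$ as a maximal $ch$-separated subset of $\Gamma$ and then reading off the three properties from the geometry of a smooth compact surface together with the shape-regularity and quasi-uniformity of $\calT_h^{\rm ext}$. Concretely, I would fix the separation constant $c$ and take $\calX_h\subset\Gamma$ maximal with respect to the property that $|x-x'|\ge ch$ for distinct $x,x'\in\calX_h$; such a set exists by compactness of $\Gamma$ and a greedy selection. Maximality forces the surface disks $\{D_{ch}(x)\}_{x\in\calX_h}$ to cover $\Gamma$, since any $y\in\Gamma$ must lie within $ch$ of some $x\in\calX_h$ (else it could be added). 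Because every $K\in\calK_h$ has $K^\ell\subset\Gamma$ and every $T\in\calT_h$ meets $\Gamma_h$ (whose lift lies on $\Gamma$), it follows that $\{\calK_{ch,x}\}$ and $\{\calT_{ch,x}\}$ cover $\calK_h$ and $\calT_h$. For $h$ small, $D_{ch}(x)$ is comparable to a geodesic disk on the smooth $2$-surface, so $|D_{ch}(x)|\sim h^2$, and the separation gives $\#\calX_h\sim h^{-2}$.

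Next I would establish the two counting bullets by packing. The $ch$-separation means the disks $\{D_{ch/2}(x)\}$ of half the radius are pairwise disjoint, each of area $\sim h^2$; hence only $\calO(1)$ of the disks $D_{ch}(x)$ can contain a fixed point, and correspondingly only $\calO(1)$ of the patches overlap at any $y\in\bbR^3$, giving $\#\{x:\ y\in\calT_{ch,x}\}\lesssim 1$. For the bound $\#\calT_{ch,x}\lesssim 1$, I would note that every $T\in\calT_{ch,x}$ is a shape-regular simplex with ${\rm diam}(T)\approx h$ meeting $\Gamma_h$ within $\calO(h)$ of $x$; since the background mesh is quasi-uniform, the number of such simplices inside a fixed $\calO(h)$-ball is bounded independent of $h$ and of the cut configuration. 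The face-sharing property is then arranged by enlarging $c$ if necessary, using that the cut elements all meet the connected piece of $\Gamma_h$ near $x$, so that neighboring simplices in the cluster share facets.

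The main obstacle is the large-intersection property \eqref{large-intersect}, which is precisely what makes all subsequent estimates robust to how $\Gamma_h$ slices the mesh. Here I would argue by area balancing. The portion of $\Gamma_h$ lying over $D_{ch}(x)$ has area $\sim h^2$: indeed $|D_{ch}(x)|\sim h^2$ by the first step, and by \eqref{eqn:GeomApprox} the closest-point projection $\bp$ restricts to a bi-Lipschitz map between $\Gamma_h$ and $\Gamma$ with constants tending to $1$ as $h\to 0$, so areas are preserved up to a fixed factor. This total area is carried by the $\calO(1)$ cut elements $\{K=T\cap\Gamma_h:\ T\in\calT_{ch,x}\}$, so by the pigeonhole principle at least one of them, $K_x=T_x\cap\Gamma_h$, satisfies $|K_x|\gtrsim h^2$. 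Conversely $|K_x|\lesssim h^2$ always, since $K_x$ is a planar polygon inside a diameter-$h$ simplex, whence $|K_x|\sim h^2$. Finally shape-regularity and ${\rm diam}(T_x)\approx h$ give $|T_x|\sim h^3\sim h\,|K_x|$, which is exactly \eqref{large-intersect}. The delicate point throughout is that every constant in this chain must be traced to remain independent of the intersection pattern of $\Gamma_h$ with $\calT_h^{\rm ext}$; the pigeonhole step is what delivers a single fat element per patch without any non-degeneracy assumption on the cut.
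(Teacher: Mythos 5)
The paper itself gives no proof of this lemma; it is imported verbatim from \cite[(5.1)--(5.4)]{burman2017cut}. Judged on its own merits, most of your argument is sound and is essentially the strategy of that reference: a maximal separated point set on $\Gamma$ gives the covering; packing of disjoint half-radius disks gives the finite-overlap bound; quasi-uniformity and shape-regularity confine $\calT_{ch,x}$ to a ball of radius $\calO(h)$ about $x$ and hence bound its cardinality; and your pigeonhole step for \eqref{large-intersect} is correct and complete: the set $D=\bp^{-1}\bigl(D_{ch}(x)\bigr)\cap\Gamma_h$ has area $\gtrsim h^2$ (area comparison under $\bp$, via $|1-\mu_h|=\calO(h^2)$ or \eqref{eqn:GeomApprox}), it is covered by the $\calO(1)$ cut polygons $K\in\calK_{ch,x}$, so some $|K_x|\gtrsim h^2$, while planarity of each cut inside a single tetrahedron gives $|K_x|\lesssim h^2$ and quasi-uniformity gives $|T_x|\approx h^3$, whence $|T_x|\approx h\,|K_x|$.

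The one genuine gap is the face-sharing bullet, and the mechanism you propose for it --- ``enlarging $c$ if necessary'' --- does not close. Your construction produces, for $T\in\calT_{ch,x}$, a face-neighbor whose cut is only guaranteed to lie within distance roughly $(c+2)h$ of $x$, i.e.\ a member of $\calT_{(c+2)h,x}$, not of $\calT_{ch,x}$; enlarging the patch radius merely recreates the same problem for elements at the fringe of the enlarged patch, so no finite choice of $c$ terminates this regress. The correct argument needs connectedness rather than dilation: $D$ is connected (it is the homeomorphic preimage of the topological disk $D_{ch}(x)$ under $\bp|_{\Gamma_h}$) and has diameter $\approx 2ch$, which exceeds ${\rm diam}(T)\le h$ once $c\ge 1$. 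Any $T\in\calT_{ch,x}$ satisfies $K\cap D\neq\varnothing$ with $K=T\cap\Gamma_h$, and since $D\not\subset T$, connectedness of $D$ forces a point $q\in D\cap\partial T\subset K$. Taking $T'$ to be the neighbor across any face of $T$ containing $q$, one has $q\in T'\cap\Gamma_h$ and $\bp(q)\in D_{ch}(x)$, so $T'\in\calT_{ch,x}$ and it shares a face with $T$ --- with the \emph{same} constant $c$. This argument also handles degenerate cuts (where $T\cap\Gamma_h$ is a point or segment, so that membership in the patch is realized on $\partial T$ automatically), a case your sketch does not address but which the lemma, as stated for all of $\calT_h$, must cover.
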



\subsection{Norm Correspondences}
We set $\mu_h = \bn \cdot \bn_h(1 - d\kappa_1)(1 - d\kappa_2)$,
and for each $E\in \calE_h$,
$\mu_E = |({\bf P}-d{\bf H})\bt_E|$.
We then have the change of variable formulas \cite{Demlow09,CockburnDemlow16} 
\begin{align*}
\int_{\Gamma_h} v^e \mu_h &= \int_\Gamma v\quad \forall v\in L^1(\Gamma),\qquad
\int_{E} v^e \mu_E = \int_{E^\ell} v\quad \forall v\in L^1(E^\ell).
\end{align*}
Because $|1-\mu_h| = \calO(h^2)$ and $|1-\mu_E| = \calO(h^2)$, we have
\begin{align}
\|v^e\|_{L^p(K)} \approx \|v\|_{L^p(K^\ell)}, \qquad \|v^e\|_{L^p(\p K)}\approx \|v\|_{L^p(\p K^\ell)} \qquad \forall v\in W^{1,p}(K^\ell). \label{250921-1}
\end{align}
Likewise, since (cf.~\cite{demlow2007adaptive})
\begin{align}
\nab_{\Gamma_h} v^e
& = {\bf P}_h({\bf P} - d {\bf H})(\nab_\Gamma v)^e, \label{250914-1}
\end{align}
we have
\begin{align}
        \label{eqn:GradEquivBound}
\|\nab_{\Gamma_h} v^e\|_{L^p(K)}\approx \|\nab_\Gamma v\|_{L^p(K^\ell)},\quad
\|\nab_{\Gamma_h} v^e\|_{L^p(\p K)}\approx \|\nab_\Gamma v\|_{L^p(\p K^\ell)}\quad
\forall v\in W^{2,p}(K^\ell).
\end{align}
There also holds \cite{LarssonLarson17,neilan2025c0},
\begin{align*}
\nab_{\Gamma_h}^2v^e
& = {\bf P}_h(\nab_\Gamma^2v)^e{\bf P}_h
- d{\bf P}_h{\bf H}(\nab_\Gamma^2v)^e{\bf P}_h - d{\bf P}_h(\nab_\Gamma^2v)^e{\bf H}{\bf P}_h + d^2{\bf P}_h{\bf H}(\nab_\Gamma^2v)^e{\bf H}{\bf P}_h \\
& \qquad - ({\bf P}_h\bn) \otimes ({\bf H}(\nab_\Gamma v)^e){\bf P}_h - ({\bf P}_h{\bf H}(\nab_\Gamma v)^e) \otimes ({\bf P}_h\bn)
 - d{\bf P}_h\nab{\bf H}(\nab_\Gamma v)^e{\bf P}_h,
\end{align*}
where for a vector valued function $\bv=[v_1,v_2,v_3]^\intercal$,
the quantity $\nab {\bf H} \bv$ is a $3 \times 3$ matrix with entries
\begin{align}
(\nab{\bf H}\bv)_{i, j}
& = \sum_{k = 1}^3\frac{\p H_{i, k}}{\p x_j}v_k \qquad i, j = 1, 2, 3. \label{250909-3}
\end{align}
It thus follows that, for all $v \in W^{3,p}(K^\ell)$,
\begin{equation}
    \label{eqn:HessEquivBound}
\begin{split}
\|\nab_{\Gamma_h}^2 v^e\|_{L^p(K)}
&\lesssim \|\nab_{\Gamma}^2 v\|_{L^p(K^\ell)}
+ h \|\nab_\Gamma v\|_{L^p(K^\ell)},\\
\|\nab_{\Gamma_h}^2 v^e\|_{L^p(\p K)}
&\lesssim \|\nab_{\Gamma}^2 v\|_{L^p(\p K^\ell)}
+ h \|\nab_\Gamma v\|_{L^p(\p K^\ell)},
\end{split}
\end{equation}
and for $v\in W^{3,p}(K)$,
\begin{equation}
    \label{eqn:HessEquivBound2}
\begin{split}
\|\nab_{\Gamma}^2 v^\ell\|_{L^p(K^\ell)}
&\lesssim \|\nab_{\Gamma_h}^2 v\|_{L^p(K)}
+ h \|\nab_{\Gamma_h} v\|_{L^p(K)},\\
\|\nab_{\Gamma}^2 v^\ell\|_{L^p(\p K^\ell)}
&\lesssim \|\nab_{\Gamma_h}^2 v\|_{L^p(\p K)}
+ h \|\nab_{\Gamma_h} v\|_{L^p(\p K)}.
\end{split}
\end{equation}

Next, by virtue of $\Delta_{\Gamma_h}v^e = {\rm tr}(\nab_{\Gamma_h}^2v^e)$ 
and $\Delta_{\Gamma}v = {\rm tr}(\nab_{\Gamma}^2v)$ 
we have
\begin{equation}\label{LapChain}
\begin{split}
\Delta_{\Gamma_h}v^e
& = (\Delta_\Gamma v)^e - \bn_h^\intercal(\nab_\Gamma^2v)^e\bn_h \\
& \qquad - 2d({\rm tr}({\bf H}(\nab_\Gamma^2v)^e) - \bn_h^\intercal{\bf H}(\nab_\Gamma^2v)^e\bn_h) \\
& \qquad + d^2({\rm tr}({\bf H}(\nab_\Gamma^2v)^e{\bf H}) - \bn_h^\intercal{\bf H}(\nab_\Gamma^2v)^e{\bf H}\bn_h) \\
& \qquad + 2(\bn \cdot \bn_h)\bn_h^\intercal{\bf H}(\nab_\Gamma v)^e - d{\rm tr}({\bf P}_h\nab{\bf H}(\nab_\Gamma v)^e).
\end{split}
\end{equation}
Therefore
\begin{equation}
    \label{eqn:LapEquivBound}
\begin{split}
\|\Delta_{\Gamma_h} v^e\|_{L^p(K)}&\lesssim \|\Delta_\Gamma v\|_{L^p(K^\ell)}+h^2 \|\nab^2_\Gamma v\|_{L^p(K^\ell)}+ h \|\nab_\Gamma v\|_{L^p(K^\ell)},\\
\|\Delta_{\Gamma_h} v^e\|_{L^p(\p K)}&\lesssim \|\Delta_\Gamma v\|_{L^p(\p K^\ell)}+h^2 \|\nab^2_\Gamma v\|_{L^p(\p K^\ell)}+ h \|\nab_\Gamma v\|_{L^p(\p K^\ell)},
\end{split}
\end{equation}
and
\begin{equation}
    \label{eqn:LapEquivBound2}
\begin{split}
\|\Delta_{\Gamma} v^\ell\|_{L^p(K^\ell)}&\lesssim \|\Delta_{\Gamma_h} v\|_{L^p(K)}+h^2 \|\nab^2_{\Gamma_h} v\|_{L^p(K)}+ h \|\nab_{\Gamma_h} v\|_{L^p(K)},\\
\|\Delta_{\Gamma} v^\ell\|_{L^p(\p K^\ell)}&\lesssim \|\Delta_{\Gamma_h} v\|_{L^p(\p K)}+h^2 \|\nab^2_{\Gamma_h} v\|_{L^p(\p K)}+ h \|\nab_{\Gamma_h} v\|_{L^p(\p K)}.
\end{split}
\end{equation}

We end this subsection with 
an estimate that relates the $L^p$ norm
 on the bulk domain
to the $L^p$ norm on the surface.
\begin{lemma} \label{lem:deltaLem}
For all $v \in W^{m, p}(\Gamma)$ ($m\in \mathbb{N}_0,\ 1 \leq p < \infty$) the following bound holds
\begin{align*}
\|D^\mu u^e\|_{L^p(\Omega_h)}
& \lesssim h^{\frac{1}{p}}\|u\|_{W^{m, p}(\Gamma)}, \qquad |\mu| = m.
\end{align*}
In particular,
\begin{align}
\|D^\mu u^e\|_{\Omega_h}
& \lesssim h^{\frac{1}{2}}\|u\|_{H^m(\Gamma)}, \label{*} \\
\|D^\mu u^e\|_{L^1(\Omega_h)}
& \lesssim h\|u\|_{W^{m, 1}(\Gamma)}. \label{250728-1}
\end{align}
\end{lemma}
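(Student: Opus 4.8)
The plan is to exploit that $\Omega_h$ is a narrow band of width $\calO(h)$ about $\Gamma$ together with the fact that the extension $u^e=u\circ\bp$ is constant along the normal rays of $\Gamma$. First I would verify the tube inclusion $\Omega_h\subset\{x:\ |d(x)|\le ch\}$ for a constant $c$ independent of $h$. Indeed, given $T\in\calT_h$ there is a point $z\in T\cap\Gamma_h$, and for any $x\in T$ we have $|x-z|\le{\rm diam}(T)\lesssim h$; since $d$ is $1$-Lipschitz and $\|d\|_{L^\infty(\Gamma_h)}\lesssim h^2$ by \eqref{eqn:GeomApprox}, it follows that $|d(x)|\le|d(x)-d(z)|+|d(z)|\lesssim h$. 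Taking the union over $T\in\calT_h$ yields the claim.

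Next I would pass to normal (tubular) coordinates via the diffeomorphism $\Phi(y,r)=y+r\bn(y)$, $y\in\Gamma$, $|r|<\delta$, whose volume element is $(1-r\kappa_1)(1-r\kappa_2)\,dA(y)\,dr\approx dA(y)\,dr$ for $|r|\lesssim h$. Combined with the inclusion above, this gives
\[
\|D^\mu u^e\|_{L^p(\Omega_h)}^p\le\int_{\{|d|\le ch\}}|D^\mu u^e|^p\,dx\lesssim\int_{-ch}^{ch}\!\!\int_\Gamma|D^\mu u^e(\Phi(y,r))|^p\,dA(y)\,dr.
\]

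The crucial structural input is twofold. On one hand, a Cartesian derivative $D^\mu u^e$ of order $m=|\mu|$ can be written as a linear combination of the extended surface quantities $(\nabla_\Gamma^k u)^e$, $0\le k\le m$, with coefficients assembled from $d$, ${\bf H}$ and their derivatives, all of which are uniformly bounded on $U_\delta$ by the smoothness of $\Gamma$; hence $|D^\mu u^e(x)|\lesssim\sum_{k=0}^m|(\nabla_\Gamma^k u)^e(x)|$. For $m=1,2$ this is precisely \eqref{250914-1} and the Hessian identity preceding \eqref{eqn:HessEquivBound}; the general case follows by induction, differentiating the standard extension identity $\nabla u^e=({\bf P}-d{\bf H})(\nabla_\Gamma u)^e$ (the unprojected analogue of \eqref{250914-1}) and observing that each differentiation of a bounded geometric factor remains bounded. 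On the other hand, since $\bp\circ\Phi(y,r)=y$, the extensions are independent of the normal variable: $(\nabla_\Gamma^k u)^e(\Phi(y,r))=\nabla_\Gamma^k u(y)$. Combining these two facts, the integrand no longer depends on $r$, so the $r$-integration contributes only the band width $2ch\approx h$:
\[
\|D^\mu u^e\|_{L^p(\Omega_h)}^p\lesssim h\int_\Gamma\sum_{k=0}^m|\nabla_\Gamma^k u(y)|^p\,dA(y)=h\,\|u\|_{W^{m,p}(\Gamma)}^p,
\]
which is the asserted estimate after taking $p$-th roots. The special cases \eqref{*} and \eqref{250728-1} follow by setting $p=2$ and $p=1$, respectively.

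I expect the only genuinely technical ingredient to be the general-order chain-rule bound $|D^\mu u^e|\lesssim\sum_{k\le m}|(\nabla_\Gamma^k u)^e|$ with constants that are $h$-independent (indeed intrinsic to $\Gamma$). The low-order instances are already recorded in the excerpt, so the remaining work is the inductive bookkeeping of the geometric coefficients and the verification that they and their derivatives stay uniformly bounded on the fixed neighborhood $U_\delta$. Everything else—the tube inclusion and the normal-coordinate change of variables with its $\approx 1$ Jacobian—is elementary.
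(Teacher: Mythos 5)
Your proof is correct and is essentially the argument the paper relies on: the paper omits its own proof, deferring to \cite[Lemma 3.1]{reusken2015analysis}, and that proof proceeds exactly as you do --- narrow-band inclusion $\Omega_h\subset\{|d|\lesssim h\}$, tubular coordinates with Jacobian $\approx 1$, a chain-rule bound of Cartesian derivatives of $u^e$ by the extended surface derivatives $(\nabla_\Gamma^k u)^e$, and constancy of extensions along normal rays so that the normal integration contributes the factor $h$. Your remark that the argument is insensitive to the exponent $p\in[1,\infty)$ is precisely the ``trivial extension'' the paper invokes to pass from the cited $p=2$ case to general $p$.
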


\begin{proof}
The proof for the case $p=2$ is given 
in \cite[Lemma 3.1]{reusken2015analysis}.
The arguments given there also trivially
extend to general exponent $p\in [1,\infty)$,
and therefore the proof is omitted.
\end{proof}

\subsection{Finite Element Spaces}
For $D\subset \bbR^3$, let $\mathbb{P}_k(D)$ denote
the space of polynomials of degree $\le k$ with domain $D$.
We use the notation
\[
\mathbb{P}_k^{{\rm dc}}(\calT_h) = \prod_{T\in \calT_h} \mathbb{P}_k(T),\qquad
\mathbb{P}_k^{{\rm c}}(\calT_h) = \mathbb{P}_k^{{\rm dc}}(\calT_h)\cap H^1(\Omega_h),
\]
that is, $\mathbb{P}_k^{{\rm dc}}(\calT_h)$ is the space
of discontinuous $k$th-degree piecewise polynomials on $\calT_h$,
and $\mathbb{P}_k^{{\rm c}}(\calT_h)$ is the Lagrange finite element space,
consisting of globally continuous piecewise polynomials of degree $\le k$.
We set
\[
V_h = \mathbb{P}_2^{{\rm c}}(\calT_h),\qquad V_{h,0} = \left\{v\in V_h:\ \int_{\Gamma_h} v=0\right\}
\]
as the finite element spaces used in the $C^0$ interior penalty method below.

\subsection{Scaling Estimates I}
We will frequently use the following trace inequalities for $v\in W^{2,p}(\calT_h)$ ($1\le p<\infty$)
(cf.~\cite[(12.16)]{ern2021finite}, \cite[(4.4)]{reusken2015analysis}, and \cite[(15)]{hansbo2003finite})
\begin{align}
\|v\|_{L^p(\partial T)}^p
& \lesssim h^{-1}\|v\|_{L^p(T)}^p + h^{p-1}\|\nab v\|_{L^p(T)}^p \qquad \forall T \in \calT_h, \label{trace-inequ-1} \\
\|v\|_{L^p(\Gamma_h \cap T)}^p
& \lesssim h^{-1}\|v\|_{L^p(T)}^p+ h^{p-1}\|\nab v\|_{L^p(T)}^p \qquad \forall T \in \calT_h, \label{trace-inequ-2} \\
\|v\|_{L^p(E \cap F)}^p
& \lesssim h^{-1}\|v\|_{L^p(F)}^p + h^{p-1}\|\nab v\|_{L^p(F)}^p \qquad \forall (E,F) \in \calE_h \times \calF_h. 
 \label{trace-inequ-3}
\end{align}

We also have the following inverse inequalities for $v \in \mathbb{P}_k^{dc}(\calT_h)$
(cf.~\cite[(1.36)--(1.37)]{di2011mathematical} and \cite[Sec. 4]{hansbo2003finite})
\begin{align}
\|\nab v\|_T
& \lesssim h^{- 1}\|v\|_T & & \forall T \in \calT_h, \label{inverse-est-1} \\
\|v\|_{\partial T}
& \lesssim h^{- \frac{1}{2}}\|v\|_T & & \forall T \in \calT_h, \label{inverse-est-2} \\
\|v\|_{K \cap T}
& \lesssim h^{- \frac{1}{2}}\|v\|_T & & \forall (K, T) \in \calK_h \times \calT_h, \label{inverse-est-3} \\
\|v\|_{E \cap F}
& \lesssim h^{- \frac{1}{2}}\|v\|_F & & \forall (E, F) \in \calE_h \times \calF_h. \label{inverse-est-4}
\end{align}

\begin{lemma}\label{lem:NeighborCompare}
Let $v\in \mathbb{P}_k^{\rm dc}(\calT_h)$\ ($k\ge 0$),
and let $T^+,T^-$ be tetrahedra in $\calT_h$ with a common
face $F\in \p T^+\cap \p T^-\in \calF_h$.
Then
\begin{align}
\|v\|_{T^+}^2
& \lesssim \|v\|_{T^-}^2 + \sum_{j = 0}^{k}h^{2j + 1}\left\|\left[\frac{\p^j v}{\p\bnu_F^j}\right]\right\|_F^2, \label{F-1} \\
\|\nab v\|_{T^+}^2
& \lesssim \|\nab v\|_{T^-}^2 + \sum_{j = 0}^{k}h^{2j - 1}\left\|\left[\frac{\p^jv}{\p\bnu_F^j}\right]\right\|_F^2, \label{F-2}
\end{align}
with the hidden constants depending only on the shape-regularity of $\calT_h$ and $k$.
\end{lemma}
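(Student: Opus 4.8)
The plan is to transport the bound from $T^-$ to $T^+$ by extending each polynomial piece globally and measuring the discrepancy across $F$ through the normal-derivative jumps. Denote by $v^\pm$ the restrictions of $v$ to $T^\pm$, each viewed as a polynomial of degree $\le k$ on all of $\bbR^3$, and set $w = v^+ - v^-$, again a polynomial of degree $\le k$. The key structural observation is that the quantities on the right-hand sides of \eqref{F-1} and \eqref{F-2} are precisely the normal traces of $w$: for $0\le j\le k$,
\[
\left[\frac{\p^j v}{\p\bnu_F^j}\right]\Big|_F = \frac{\p^j w}{\p\bnu_F^j}\Big|_F =: g_j .
\]
Writing $v^+ = w + v^-$ on $T^+$, the triangle inequality reduces the proof to two independent estimates: (i) controlling $\|w\|_{T^+}$ and $\|\nab w\|_{T^+}$ by the $g_j$, and (ii) comparing $\|v^-\|_{T^+}$ and $\|\nab v^-\|_{T^+}$ with their analogues over $T^-$.

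For step (i) I would introduce coordinates $(y,s)$, where $s$ is the signed distance to the plane of $F$ measured along $\bnu_F$ and $y$ parametrizes that plane. As $w$ is a polynomial of degree $\le k$, its Taylor expansion in $s$ terminates and is exact, $w(y,s)=\sum_{j=0}^k \tfrac{s^j}{j!}\,g_j(y)$, with $g_j$ the planar extension of the jump above. On $T^+$ one has $|s|\lesssim h$, whence $|w(y,s)|^2\lesssim \sum_{j=0}^k h^{2j}|g_j(y)|^2$; integrating over $T^+$ by Fubini in the $s$-fibers, each of length $\lesssim h$, gives a factor $h$ times $\|g_j\|_{\hat F}^2$, where $\hat F$ denotes the orthogonal shadow of $T^+$ onto the plane of $F$. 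A polynomial norm equivalence on the comparable planar sets $F\subseteq\hat F$ then replaces $\|g_j\|_{\hat F}$ by $\|g_j\|_F$, yielding
\[
\|w\|_{T^+}^2 \lesssim \sum_{j=0}^k h^{2j+1}\,\|g_j\|_F^2 .
\]
The gradient bound follows at once from the inverse inequality \eqref{inverse-est-1}, $\|\nab w\|_{T^+}\lesssim h^{-1}\|w\|_{T^+}$, which multiplies every weight by $h^{-2}$ and reproduces the powers $h^{2j-1}$.

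Step (ii) is the standard polynomial norm equivalence across a shared face: because $T^+$ and $T^-$ are shape-regular simplices of comparable diameter sharing $F$, mapping the pair to a fixed reference patch and using finite-dimensional norm equivalence gives $\|v^-\|_{T^+}\lesssim\|v^-\|_{T^-}=\|v\|_{T^-}$ and, applied to the degree-$(k-1)$ polynomial $\nab v^-$, $\|\nab v^-\|_{T^+}\lesssim\|\nab v\|_{T^-}$. Combining (i) and (ii) via the triangle inequality and $(a+b)^2\le 2a^2+2b^2$ delivers \eqref{F-1} and \eqref{F-2}. The two norm equivalences are classical once a reference configuration is fixed; the part demanding the most care is the $h$-power bookkeeping in step (i), since the weights $h^{2j+1}$ and $h^{2j-1}$ arise from the precise interplay between the factor $s^{2j}\lesssim h^{2j}$ in the terminating Taylor expansion and the single extra power of $h$ from integration across the $O(h)$-thick slab $T^+$. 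A secondary point is ensuring the comparability of $T^+$ with its orthogonal shadow onto the plane of $F$, which is exactly where shape-regularity of $\calT_h$ enters.
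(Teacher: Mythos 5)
Your proof is correct. The paper itself gives no argument for Lemma \ref{lem:NeighborCompare}: it simply cites \cite[Lemma 5.1]{massing2014stabilized} for \eqref{F-1} and \cite[(4.5)]{GuzmanMaxim18} for \eqref{F-2}, so any self-contained proof differs from the paper's "proof" by default. Your argument---extend $v^\pm$ polynomially, set $w=v^+-v^-$, observe that the terminating Taylor expansion of $w$ in the $\bnu_F$-direction is exact with coefficients given precisely by the jumps $g_j$, integrate over the $O(h)$-thick slab $T^+$ by Fubini, and transfer planar norms from the shadow $\hat F$ back to $F$ by polynomial norm equivalence---is essentially the classical ghost-penalty argument used in the cited references, so it is a legitimate replacement for the citation. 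The $h$-power bookkeeping is right: $s^{2j}\lesssim h^{2j}$ plus one power of $h$ from the fiber integration yields the weights $h^{2j+1}$ in \eqref{F-1}, and the inverse inequality \eqref{inverse-est-1} applied to $w|_{T^+}\in\mathbb{P}_k(T^+)$ converts these to $h^{2j-1}$ in \eqref{F-2}; the comparison $\|v^-\|_{T^+}\lesssim\|v^-\|_{T^-}$ (and its gradient analogue applied componentwise to the degree-$(k-1)$ polynomial $\nab v^-$) is the standard norm equivalence for polynomials on comparable sets. The two geometric facts you flag as needing care do indeed follow from shape regularity: a face of a shape-regular tetrahedron is itself shape-regular (its area is $\gtrsim h^2$, hence its inradius is $\gtrsim h$), so both $\hat F$ and $T^+$ are contained in bounded dilations of $F$ and of $T^-$ respectively, which is exactly what the norm equivalences require---and this is also why the hidden constants depend only on the shape regularity of $\calT_h$ and on $k$, as the lemma asserts.
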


\begin{proof}
A proof of \eqref{F-1} can be found in \cite[Lemma 5.1]{massing2014stabilized}, and a proof of \eqref{F-2}
can be found in \cite[(4.5)]{GuzmanMaxim18}.
\end{proof}


\section{$C^0$ Interior Penalty Method}\label{sec-Method}
We define the following bilinear forms and linear form
\begin{align}
\label{eqn:AhDef}A_h(v, w)
& = a_h(v, w) + s_h(v, w), \\
\label{eqn:ahDef}a_h(v, w)
& = (\Delta_{\Gamma_h}v, \Delta_{\Gamma_h}w)_{\calK_h} - (\{\Delta_{\Gamma_h}v\}, \bmu_h \cdot [\nab_{\Gamma_h}w])_{\calE_h} -(\bmu_h \cdot [\nab_{\Gamma_h}v], \{\Delta_{\Gamma_h}w\})_{\calE_h} \\
& \qquad + \frac{\sigma}{h}(\bmu_h\cdot [\nab_{\Gamma_h}v], \bmu_h \cdot [\nab_{\Gamma_h}w])_{\calE_h}, \\
\label{eqn:shDef}s_h(v, w)
& = 
\gamma \left(([\nab v], [\nab w])_{\calF_h} + ([\nab^2v], [\nab^2w])_{\calF_h}\right), \\
l_h(v)
& = (f_h, v)_{\Gamma_h},
\end{align}
where $f_h$ is an approximation to $f$ defined on $\Gamma_h$,
and $\sigma,\gamma>0$ are penalty parameters. 
The $C^0$ interior penalty method is to find $u_h \in V_{h, 0}$ such that
\begin{align}
A_h(u_h, v)
& = l_h(v) \qquad \forall v \in V_{h, 0}. \label{disc-prob}
\end{align}

Associated to the bilinear forms, we define the (semi)norms
\begin{align*} 
\begin{split}
\|v\|_{h}^2
& = \|\Delta_{\Gamma_h}v\|_{\calK_h}^2 + h^{-1}\|\bmu_h \cdot [\nab_{\Gamma_h}v]\|_{\calE_h}^2+\|[\nab v]\|_{\calF_h}^2 + \|[\nab^2v]\|_{\calF_h}^2, \\
\tbar{v}_h^2 & = \|v\|_h^2 +  h \|\Delta_{\Gamma_h} v \|_{\p \calK_h}^2.
 \end{split}
\end{align*}
We also define the analogous bilinear form and linear form defined on the exact surface:
\begin{align*}
a_h^\ell(v, w)
& = (\Delta_{\Gamma}v, \Delta_{\Gamma}w)_{\calK_h^\ell} - (\{\Delta_{\Gamma}v\}, \bmu_{\ell} \cdot [\nab_{\Gamma}w])_{\calE^\ell_h} - (\bmu_{\ell} \cdot [\nab_{\Gamma}v], \{\Delta_{\Gamma}w\})_{\calE_h^\ell} \\
& \qquad + \frac{\sigma}{h}(\bmu_{\ell} \cdot [\nab_{\Gamma}v], \bmu_{\ell} \cdot [\nab_{\Gamma}w])_{\calE^\ell_h}, \\
l(v)
& = (f,v)_{\Gamma},
\end{align*}
along with the corresponding norms:
\begin{align*} 
\begin{split}
\|v\|_{*,h}^2
& = \|\Delta_{\Gamma}v\|_{\calK^\ell_h}^2 + h^{-1}\|\bmu_\ell \cdot [\nab_{\Gamma}v]\|_{\calE^\ell_h}^2+\|[\nab v]\|_{\calF_h}^2 + \|[\nab^2v]\|_{\calF_h}^2, \\
\tbar{v}_{*,h}^2 & = \|v\|_{*,h}^2 +  h \|\Delta_{\Gamma} v \|_{\p \calK^\ell_h}^2
 \end{split}
\end{align*}
\begin{remark}\label{rem:Consistent}
Note that, just like in the Euclidean setting,
the bilinear form $a_h^\ell(\cdot,\cdot)$ is consistent
with the surface biharmonic operator. In particular,
standard integration-by-parts identities 
show that the exact solution to \eqref{eqn:SBiharmonicProblem} 
satisfies  $a_h^\ell(u, v^\ell) = l(v^\ell)$ for all $v \in V_{h, 0}$ provided $u\in H^3(\Gamma)$.
\end{remark}


\subsection{Scaling Estimates II}
The stability and convergence analysis of the $C^0$ IP method requires a few preliminary results, relating norms between the surface and bulk triangulations as well as equivalence norm estimates on $V_h$. First, we have the following Poincare-type inequality for piecewise polynomial spaces.

\begin{lemma}
Let $w \in \mathbb{P}_k^{dc}(\calT_h)$,
and let $\lambda_{\Gamma_h}(w)\in \bbR$ denote
the average of $w$ over $\Gamma_h$.
Then for $h$ sufficiently small, there holds 
\begin{align}
\|w - \lambda_{\Gamma_h}(w)\|_{\calT_h}^2
& \lesssim h\|\nab_{\Gamma_h}w\|_{\calK_h}^2 + h^{2}\|\nab w\|_{\calT_h}^2 + h^{-1}\|[w]\|_{\calF_h}^2.
\label{7.1}
\end{align}
\end{lemma}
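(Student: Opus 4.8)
The plan is to prove \eqref{7.1} in two stages: first I would transfer the bulk $L^2$-norm over $\calT_h$ to a surface $L^2$-norm over $\calK_h$, picking up facet-jump and bulk-gradient terms, and then invoke a broken Poincar\'e inequality on $\Gamma_h$ to control the surface norm by the surface gradient. Throughout, write $c = \lambda_{\Gamma_h}(w)$; since $c$ is a global constant it belongs to $\mathbb{P}_k^{\rm dc}(\calT_h)$, every jump of $w-c$ equals the corresponding jump of $w$, and $w-c$ has vanishing mean over $\Gamma_h$, which is needed for the Poincar\'e step.

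For the first stage I would exploit the Large Intersection Covering Property. Fix a patch $\calT_{ch,x}$ with distinguished element $T_x$ satisfying $|T_x|\sim h|K_x|$, $K_x = T_x\cap\Gamma_h$, from \eqref{large-intersect}. Rescaling $T_x$ to a reference simplex of unit diameter turns the large-intersection condition into the statement that the rescaled cut has area comparable to the simplex, so norm equivalence on the finite-dimensional space $\mathbb{P}_k$ gives $\|w-c\|_{T_x}^2 \lesssim h\|w-c\|_{K_x}^2$. Since $\#\calT_{ch,x}\lesssim 1$ and the patch is face-connected, each $T\in\calT_{ch,x}$ is reached from $T_x$ by a uniformly bounded chain of face-neighbours, so iterating \eqref{F-1} yields
\[
\|w-c\|_{T}^2 \lesssim \|w-c\|_{T_x}^2 + \sum_{F}\sum_{j=0}^{k} h^{2j+1}\Big\|\Big[\tfrac{\p^j w}{\p\bnu_F^j}\Big]\Big\|_F^2,
\]
the inner sum running over the boundedly many faces of the patch. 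The higher-order normal-derivative jumps are absorbed into the bulk gradient: for $j\ge 1$, the inverse trace estimate \eqref{inverse-est-2} together with \eqref{inverse-est-1} gives $\|\p^j w/\p\bnu_F^j\|_F^2 \lesssim h^{-2j+1}\|\nab w\|_T^2$, whence $h^{2j+1}\|[\p^j w/\p\bnu_F^j]\|_F^2 \lesssim h^2\|\nab w\|_{T}^2$, while the $j=0$ term is $h\|[w]\|_F^2 \le h^{-1}\|[w]\|_F^2$. Summing over the covering and using its finite overlap and bounded cardinality (so that every $K\in\calK_h$, $T\in\calT_h$ and $F\in\calF_h$ is counted boundedly often) produces
\[
\|w-c\|_{\calT_h}^2 \lesssim h\|w-c\|_{\calK_h}^2 + h^2\|\nab w\|_{\calT_h}^2 + h^{-1}\|[w]\|_{\calF_h}^2.
\]

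It remains to bound $h\|w-c\|_{\calK_h}^2 = h\|w-c\|_{\Gamma_h}^2$. For this I would use a broken Poincar\'e inequality on the cut surface, $\|w-c\|_{\Gamma_h}^2\lesssim\|\nab_{\Gamma_h}w\|_{\calK_h}^2 + h^{-1}\|[w]\|_{\calE_h}^2$, valid because $w-c$ has zero surface mean; on the non-shape-regular mesh $\calK_h$ this is again established through the covering, comparing degenerate triangles with their shape-regular representatives. Multiplying by $h$ gives $h\|w-c\|_{\Gamma_h}^2 \lesssim h\|\nab_{\Gamma_h}w\|_{\calK_h}^2 + \|[w]\|_{\calE_h}^2$. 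Finally I convert surface edge jumps into facet jumps: each $E\in\calE_h$ equals $F\cap\Gamma_h$ for the shared face $F$, with $[w]|_E = ([w]|_F)|_E$, so applying the inverse trace bound \eqref{inverse-est-4} to the polynomial $[w]|_F$ on $F$ gives $\|[w]\|_E^2 \lesssim h^{-1}\|[w]\|_F^2$, hence $\|[w]\|_{\calE_h}^2 \lesssim h^{-1}\|[w]\|_{\calF_h}^2$. Combining these with the bulk bound of the previous paragraph yields \eqref{7.1}.

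I expect the main obstacle to be the non-shape-regularity of both the surface and active meshes: neither the good-element equivalence $\|w-c\|_{T_x}^2\lesssim h\|w-c\|_{K_x}^2$ nor the broken surface Poincar\'e holds element-by-element, and both must be routed through the Large Intersection Covering Property and the neighbour-comparison estimate of Lemma \ref{lem:NeighborCompare}. The attendant bookkeeping is equally delicate: one must check that the chain of face-hops generates only jumps already present on the right-hand side, with every higher-order normal-derivative jump absorbed into $h^2\|\nab w\|_{\calT_h}^2$ and only the zeroth-order jump $[w]$ surviving.
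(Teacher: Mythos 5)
Your two-stage skeleton (bulk-to-surface transfer via the fat-intersection covering and face-neighbour chaining, then a Poincar\'e inequality on the surface) is the same as that of the proof the paper invokes by citation (\cite[Lemma 5.2]{burman2017cut}), and your absorption of the $j\ge 1$ normal-derivative jumps from \eqref{F-1} into $h^2\|\nab w\|_{\calT_h}^2$ is correct. However, both of your load-bearing steps have genuine gaps. The first is the fat-element estimate $\|w-c\|_{T_x}^2 \lesssim h\|w-c\|_{K_x}^2$: this is \emph{false} for $k\ge 1$, and no norm-equivalence or compactness argument can rescue it, because $K_x$ lies in the plane $\{\phi_h|_{T_x}=0\}$, so $\|\cdot\|_{K_x}$ is only a \emph{seminorm} on $\mathbb{P}_k(T_x)$ whose kernel contains every polynomial divisible by the affine function $\ell$ vanishing on that plane (take $w-c=\ell$: the right-hand side vanishes while the left does not). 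The correct fat-element inequality, which is the one used in the cited reference and is valid for all $H^1$ functions by scaling a trace/Poincar\'e inequality on a unit simplex with a fat planar cut, carries a gradient correction,
\begin{equation*}
\|p\|_{T_x}^2 \lesssim h\|p\|_{K_x}^2 + h^2\|\nab p\|_{T_x}^2,
\end{equation*}
and this correction is precisely the origin of the $h^2\|\nab w\|_{\calT_h}^2$ term in \eqref{7.1}. The gap is repairable because that term already sits on the right-hand side of \eqref{7.1}, but as written the step is wrong, and it matters: the paper applies \eqref{7.1} with $k=1$ (to $w=\nab v$ in Appendix A), not only with $k=0$.

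The second, more serious, gap is Stage 2: you assume a broken Poincar\'e inequality $\|w-c\|_{\Gamma_h}^2\lesssim \|\nab_{\Gamma_h}w\|_{\calK_h}^2+h^{-1}\|[w]\|_{\calE_h}^2$ on the cut surface and suggest it follows "through the covering, comparing degenerate triangles with their shape-regular representatives." The covering machinery does not deliver this: Lemma \ref{lem:NeighborCompare} compares bulk tetrahedra across shape-regular faces and has no analogue for the triangles of $\calK_h$, which may share arbitrarily short edges, and all standard proofs of broken Poincar\'e inequalities (averaging/enrichment, compactness, duality with edge traces) degenerate with the surface mesh. This missing inequality is exactly the nontrivial content in generalizing \cite[Lemma 5.2]{burman2017cut}, where $w$ is continuous and the surface step is simply lifting to $\Gamma$ and applying the Poincar\'e inequality on the smooth closed surface, to $\mathbb{P}_k^{\rm dc}(\calT_h)$. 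A workable repair stays in the bulk, where the mesh is shape regular: let $w^c\in\mathbb{P}_k^{\rm c}(\calT_h)$ be the Oswald (nodal averaging) interpolant of $w$, so that $\|w-w^c\|_{\calT_h}^2\lesssim h\|[w]\|_{\calF_h}^2$ and $\|\nab(w-w^c)\|_{\calT_h}^2\lesssim h^{-1}\|[w]\|_{\calF_h}^2$; then control $w-w^c$ on $\Gamma_h$ by the cut inverse inequality \eqref{inverse-est-3}, and apply the continuous-case lifted surface Poincar\'e inequality to $w^c$. With that substitution, and with the corrected fat-element bound above, your argument closes; in that route the edge-jump-to-face-jump conversion via \eqref{inverse-est-4} becomes unnecessary.
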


\begin{proof}
The proof is a simple generalization of the one given in \cite[Lemma 5.2]{burman2017cut}, and is therefore omitted.
\end{proof}


\begin{lemma} \label{lem:72}
For $v \in V_h$ and $h$ sufficiently small, the following estimate holds:
\begin{align} \label{lem-7.2}
h\|\nab^2v\|_{\calT_h}^2
& \lesssim h^2\|\Delta_{\Gamma_h}v\|_{\calK_h}^2 + \|[\nab^2v]\|_{\calF_h}^2.
\end{align}
\end{lemma}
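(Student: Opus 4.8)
The plan is to exploit that, for $v\in V_h=\mathbb{P}_2^{{\rm c}}(\calT_h)$, the Hessian $\nab^2 v$ is \emph{constant} on each $T\in\calT_h$, so the left-hand side is governed by a single symmetric matrix per element. Writing $\bar M:=\big(\lambda_{\Gamma_h}(\p_{i}\p_j v)\big)_{i,j=1}^{3}$ for the matrix of $\Gamma_h$-averages of the (element-wise constant) second partials, I would split $\nab^2 v=\bar M+(\nab^2 v-\bar M)$ and treat the oscillation $\nab^2 v-\bar M$ and the constant $\bar M$ separately. For the oscillation I apply the Poincar\'e estimate \eqref{7.1} to each component $w=\p_i\p_j v\in\mathbb{P}_0^{dc}(\calT_h)$; since such $w$ is piecewise constant, the terms $\nab_{\Gamma_h}w$ and $\nab w$ vanish, and summing over $i,j$ gives
\[
\|\nab^2 v-\bar M\|_{\calT_h}^2 \lesssim h^{-1}\|[\nab^2 v]\|_{\calF_h}^2 .
\]
This is the only place the earlier Poincar\'e lemma (and hence the covering property) is used.

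Next I would reduce the whole estimate to a bound on $|\bar M|$. Using $\|\bar M\|_{\calT_h}^2=|\bar M|^2|\Omega_h|\lesssim h|\bar M|^2$ (from \eqref{*} with $u\equiv 1$) together with the oscillation bound,
\[
h\|\nab^2 v\|_{\calT_h}^2 \lesssim h\|\bar M\|_{\calT_h}^2 + h\|\nab^2 v-\bar M\|_{\calT_h}^2 \lesssim h^2|\bar M|^2 + \|[\nab^2 v]\|_{\calF_h}^2 ,
\]
so it suffices to show $|\bar M|^2\lesssim\|\Delta_{\Gamma_h}v\|_{\calK_h}^2+h^{-2}\|[\nab^2 v]\|_{\calF_h}^2$. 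Because $\Gamma_h$ is piecewise planar, ${\bf P}_h$ is constant on each $K$ and $\Delta_{\Gamma_h}v={\rm tr}({\bf P}_h\nab^2 v\,{\bf P}_h)$ there; hence ${\rm tr}({\bf P}_h\bar M{\bf P}_h)=\Delta_{\Gamma_h}v-{\rm tr}\big({\bf P}_h(\nab^2 v-\bar M){\bf P}_h\big)$ on $\Gamma_h$. Integrating over $\Gamma_h$ and using $|K|\lesssim h^2$, $|T|\approx h^3$ with the oscillation bound to absorb the second term yields
\[
\int_{\Gamma_h}\big({\rm tr}({\bf P}_h\bar M{\bf P}_h)\big)^2 \lesssim \|\Delta_{\Gamma_h}v\|_{\calK_h}^2 + h^{-2}\|[\nab^2 v]\|_{\calF_h}^2 .
\]

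The heart of the argument, and the step I expect to be the main obstacle, is the coercivity lower bound $\int_{\Gamma_h}({\rm tr}({\bf P}_h\bar M{\bf P}_h))^2\gtrsim|\bar M|^2$. Since ${\rm tr}({\bf P}\bar M{\bf P})={\rm tr}(\bar M)-\bn^\intercal\bar M\bn$, this amounts to the quadratic form $\bar M\mapsto\int_\Gamma({\rm tr}(\bar M)-\bn^\intercal\bar M\bn)^2$ being positive definite on symmetric matrices: if it vanishes then $\bn^\intercal\bar M\bn={\rm tr}(\bar M)$ for the normals $\bn$ of $\Gamma$, and since $\Gamma$ is a closed surface its Gauss map is surjective onto $S^2$, forcing $\bn\mapsto\bn^\intercal\bar M\bn$ to be constant on $S^2$; thus $\bar M$ is a multiple of the identity and, by the trace relation, $\bar M=0$. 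This gives $\int_\Gamma({\rm tr}(\bar M)-\bn^\intercal\bar M\bn)^2\ge c_\Gamma|\bar M|^2$ with $c_\Gamma>0$ depending on $\Gamma$ but not on $h$ nor on how $\Gamma_h$ cuts the mesh, and transferring this bound to $\Gamma_h$ costs only $\calO(h^2)|\bar M|^2$ via \eqref{eqn:PPhDiff}, which is absorbed for $h$ small. The essential difficulty is that $\Delta_{\Gamma_h}$ is a \emph{trace} and so cannot control the full Hessian locally: on a patch where the surface normal is nearly constant, a globally smooth quadratic can have ${\rm tr}({\bf P}_h\nab^2 v\,{\bf P}_h)\approx 0$ while $\nab^2 v\neq 0$ and all jumps vanish, so no purely local bound is possible, and only the global spread of the surface normal — i.e. the closedness of $\Gamma$ — makes the constant part $\bar M$ recoverable. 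Combining the three displays proves the claim.
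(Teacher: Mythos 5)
Your proposal is correct and follows essentially the same route as the paper's proof: the same splitting of $\nab^2 v$ into its $\Gamma_h$-average plus an oscillation controlled by the Poincar\'e estimate \eqref{7.1}, and the same key coercivity claim for the constant part, proved via surjectivity of the Gauss map of the closed surface $\Gamma$ (your ``constant quadratic form on $\mathbb{S}^2$ implies $\bar M=\lambda {\bf I}$, hence $\bar M=0$'' is just a repackaging of the paper's ${\rm tr}(A)=0$ argument), followed by the identical ${\bf P}\to{\bf P}_h$ perturbation via \eqref{eqn:PPhDiff} absorbed for small $h$ and the reconnection to $\Delta_{\Gamma_h}v$ through inverse estimates.
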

\begin{proof}
Taking $k = 0$ and $w = \nab^2 v$ in \eqref{7.1} with $v\in V_h$ and setting $A = \lambda_{\Gamma_h}(\nab^2v)$ to be the average of $\nab^2v$, a constant symmetric $3\times 3$ matrix, we have
\begin{align} \label{7.8}
\begin{split}
h\|\nab^2v\|_{\calT_h}^2
& \lesssim h\|A\|_{\calT_h}^2 + h\|\nab^2v - A\|_{\calT_h}^2
 \lesssim h\|A\|_{\calT_h}^2 + \|[\nab^2v]\|_{\calF_h}^2,
\end{split}
\end{align}
where we used that $v$ is a piecewise quadratic polynomial.
%
It remains to estimate the first term $h\|A\|_{\calT_h}^2$. Clearly,
\begin{align}
h\|A\|_{\calT_h}^2
& \lesssim h^2\|A\|_{\calK_h}^2 \lesssim h^2 \|A\|_{\Gamma}^2 \label{7.9}
\end{align}
because $A$ is constant and $|\Gamma| \approx |\Gamma_h|$.\\
{\em Claim:}\ $\|A\|_\Gamma \lesssim \|{\bf P} : A\|_\Gamma$.\\
{\em Proof of claim:}
We show that $A\to \|{\bf P} : A\|_\Gamma$ is a norm on constant symmetric matrices.
Suppose $\|{\bf P} : A \|_\Gamma = 0$, which implies ${\bf P} : A = 0$ on $\Gamma$. Since $\Gamma$ is a closed, orientable surface in $\bbR^3$, its Gauss map is surjective, and so we can consider $\bn = {\bm e}_j\ (j = 1, 2, 3)$, the unit vector whose $j$-th component is one. By considering these three cases we easily conclude ${\rm tr}(A) = 0$. Therefore $0 = {\bf P} : A = {\rm tr}(A) - \bn^\intercal A \bn = - \bn^\intercal A \bn$ for all $\bn \in \mathbb{S}^2$. As $A$ is symmetric,  this implies $A = 0$, and so $A\to \|{\bf P} : A\|_\Gamma$ is a norm. By equivalence of norms, we then have $\|A\|_\Gamma \lesssim \|{\bf P} : A\|_\Gamma$, and the claim is proved. 

Continuing from \eqref{7.9}, we apply the claim and \eqref{eqn:PPhDiff} to obtain
\begin{align*}
h\|A\|_{\calT_h}^2
& \lesssim h^2\|A\|_{\calK_h}^2 \lesssim h^2\|{\bf P} : A\|_\Gamma^2 \lesssim h^2\|{\bf P} : A\|_{\calK_h}^2 \\
& \lesssim h^2\|{\bf P}_{h} : A\|_{\calK_h}^2 + h^2\|({\bf P} - {\bf P}_{h}) : A\|_{\calK_h}^2 \\
& \lesssim h^2\|{\bf P}_{h} : A\|_{\calK_h}^2 + h^4\|A\|_{\calK_h}^2.
\end{align*}
We absorb $h^4\|A\|_{\calK_h}^2$ to the left-hand side for $h$ sufficiently small. Consequently,
\begin{align}
h\|A\|_{\calT_h}^2
\lesssim h^2\|{\bf P}_{h} : A\|_{\calK_h}^2. \label{7.10}
\end{align}
Because $\bn_h$ is constant, there holds
$\Delta_{\Gamma_h} v = {\bf P}_h:\nab^2 v$.
Therefore, using the boundedness of ${\bf P}_{h}$, \eqref{inverse-est-3}, and \eqref{7.1}, it follows that
\begin{align*}
h\|A\|_{\calT_h}^2
& \lesssim h^2\|{\bf P}_{h} : \nab^2v\|_{\calK_h}^2 + h^2\|{\bf P}_{h} : (A - \nab^2v)\|_{\calK_h}^2 \notag \\
& \lesssim h^2\|\Delta_{\Gamma_h} v\|_{\calK_h}^2 + h\|A - \nab^2v\|_{\calT_h}^2 \notag \\
& \lesssim h^2\|\Delta_{\Gamma_h} v\|_{\calK_h}^2 + \|[\nab^2v]\|_{\calF_h}^2,
\end{align*}
which in combination with \eqref{7.8} 
yields the desired inequality \eqref{lem-7.2}.
\end{proof}


\begin{lemma}[Trace Inequality]
The following estimate holds
\begin{align}
h\|\Delta_{\Gamma_h}v\|_{\partial\calK_h}^2
\lesssim \|\Delta_{\Gamma_h}v\|_{\calK_h}^2 + \|[\nab^2v]\|_{\calF_h}^2
\le \|v\|_h^2 \qquad \forall v \in V_h,
\label{7.12}
\end{align}
for $h$ sufficiently small. Consequently, the norms $\|\cdot\|_h$ and $\tbar{\cdot}_h$ are equivalent on $V_h$.
\end{lemma}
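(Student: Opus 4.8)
The plan is to prove the trace bound \eqref{7.12} first and then read off the norm equivalence as an immediate corollary. The central difficulty is that the surface elements $K\in\calK_h$ need not be shape regular, so the naive elementwise scaling $h\|\cdot\|_{\partial K}^2\lesssim\|\cdot\|_K^2$ breaks down on elements with small intersection; this is exactly where the Large Intersection Covering Property and Lemma \ref{lem:NeighborCompare} enter. I expect the cluster-comparison step below to be the main obstacle.

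First I would record the algebraic simplification that makes everything tractable: since $\bn_h$ is constant on each $K$ and $v$ is piecewise quadratic, $\Delta_{\Gamma_h}v={\bf P}_h:\nab^2v$ is a \emph{constant} $c_K$ on each $K$, which moreover extends to a constant on the parent tetrahedron $T$ (with $\nab^2v$ itself constant on $T$). Because $K$ is a planar polygon with $\calO(1)$ edges, each of length $\lesssim h$, one has $|\partial K|\lesssim h$ and hence the elementwise bound $h\|\Delta_{\Gamma_h}v\|_{\partial K}^2\lesssim h^2|c_K|^2$. The task thus reduces to estimating $\sum_K h^2|c_K|^2$.

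Next I would pass to the covering $\{\calK_{ch,x}\}_{x\in\calX_h}$ and, on each cluster, compare $c_K$ to the value $c_{K_x}$ attached to the distinguished large-intersection element $T_x$. Splitting
\[
c_K-c_{K_x}={\bf P}_h^{K}:\bigl(\nab^2v|_{T}-\nab^2v|_{T_x}\bigr)+\bigl({\bf P}_h^{K}-{\bf P}_h^{K_x}\bigr):\nab^2v|_{T_x},
\]
I would control the first term by telescoping the identity $\nab^2v|_{T^+}-\nab^2v|_{T^-}=[\nab^2v]|_F$ (equivalently, Lemma \ref{lem:NeighborCompare} with $k=0$) along the face-connected path from $T$ to $T_x$ inside the cluster, producing a sum of Hessian jumps, and the second term by $|{\bf P}_h^{K}-{\bf P}_h^{K_x}|\lesssim h$, a consequence of \eqref{eqn:PPhDiff}, the smoothness of $\bn$, and the $\calO(h)$ diameter of the cluster. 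Squaring, and using \eqref{large-intersect} in the form $h^2|c_{K_x}|^2\sim\|\Delta_{\Gamma_h}v\|_{K_x}^2$, the jump scaling $h^2|[\nab^2v]_F|^2\sim\|[\nab^2v]\|_F^2$, and $h^4|\nab^2v|_{T_x}|^2\sim h\|\nab^2v\|_{T_x}^2$, I would obtain after summing over the $\calO(1)$ elements of each cluster
\[
\sum_{K\in\calK_{ch,x}}h^2|c_K|^2\lesssim\|\Delta_{\Gamma_h}v\|_{K_x}^2+\sum_{F}\|[\nab^2v]\|_F^2+h\|\nab^2v\|_{T_x}^2 .
\]

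Finally, I would sum over $x\in\calX_h$ and invoke the bounded-overlap properties of the covering, which collapse the right-hand side to $\|\Delta_{\Gamma_h}v\|_{\calK_h}^2+\|[\nab^2v]\|_{\calF_h}^2+h\|\nab^2v\|_{\calT_h}^2$; the last term is then estimated by Lemma \ref{lem:72} and, since $h^2\|\Delta_{\Gamma_h}v\|_{\calK_h}^2\le\|\Delta_{\Gamma_h}v\|_{\calK_h}^2$, reabsorbed into the first two, yielding \eqref{7.12}. For the norm equivalence, note that $\|\Delta_{\Gamma_h}v\|_{\calK_h}^2$ and $\|[\nab^2v]\|_{\calF_h}^2$ are both summands of $\|v\|_h^2$, so \eqref{7.12} gives $h\|\Delta_{\Gamma_h}v\|_{\partial\calK_h}^2\lesssim\|v\|_h^2$; combined with $\|v\|_h^2\le\tbar{v}_h^2=\|v\|_h^2+h\|\Delta_{\Gamma_h}v\|_{\partial\calK_h}^2$, this establishes $\|v\|_h\approx\tbar{v}_h$ on $V_h$.
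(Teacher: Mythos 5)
Your proof is correct and is essentially the paper's own argument: both exploit that $\Delta_{\Gamma_h}v={\bf P}_h:\nab^2v$ is piecewise constant, reduce the boundary norm to scaled element values, compare each element to the large-intersection element $T_x$ of its cluster (picking up Hessian jumps along face-neighbor paths), control the variation of ${\bf P}_h$ via \eqref{eqn:PPhDiff}, and absorb the leftover $h\|\nab^2v\|_{\calT_h}^2$ term with Lemma \ref{lem:72}. The only organizational difference is that you telescope $\nab^2v$ directly and handle the projection variation cluster-locally, whereas the paper applies \eqref{F-1} to $\Delta_{\Gamma_h}v$ itself and then bounds $\|[\Delta_{\Gamma_h}v]\|_{\calF_h}^2$ face-by-face through the splitting $[{\bf P}_h\nab^2 v]=[{\bf P}_h]\{\nab^2 v\}+\{{\bf P}_h\}[\nab^2 v]$ --- a cosmetic rather than substantive distinction.
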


\begin{proof}
We start by observing that since both $\bn_h$ and $\nab^2v$ are piecewise constant functions on $\calK_h$, so is $\Delta_{\Gamma_h}v$. Then, successively employing \eqref{inverse-est-4}, \eqref{inverse-est-2}, and applying the large intersection covering property and \eqref{F-1} to $\Delta_{\Gamma_h} v$ yield
\begin{align} \label{7.13}
\begin{split}
h\|\Delta_{\Gamma_h}v\|_{\partial\calK_h}^2
& \lesssim h^{- 1}\|\Delta_{\Gamma_h}v\|_{\calT_h}^2  \\
& \lesssim h^{- 1}\sum_{x \in \calX_h}\|\Delta_{\Gamma_h}v\|_{\calT_{h, x}}^2  \\
& \lesssim h^{- 1}\sum_{x \in \calX_h}\|\Delta_{\Gamma_h}v\|_{T_x}^2 + \|[\Delta_{\Gamma_h}v]\|_{\calF_h}^2  \\
%
&\lesssim \|\Delta_{\Gamma_h} v\|_{\calK_h}^2 + \|[\Delta_{\Gamma_h} v]\|_{\calF_h}^2.
\end{split}
\end{align}
%
Next, we note that
\begin{align*}
\|[\Delta_{\Gamma_h}v]\|_{\calF_h}^2
& = \|{\rm tr}([{\bf P}_{h}\nab^2v])\|_{\calF_h}^2
\leq \|[{\bf P}_{h}\nab^2v]\|_{\calF_h}^2.
\end{align*}
Therefore, by a standard identity for the jump operator, the continuity of ${\bf P}$, the boundedness of ${\bf P}_{h}$, \eqref{eqn:PPhDiff}, and \eqref{inverse-est-2} give us
\begin{align}
\begin{split} \label{251110-1}
\|[{\bf P}_{h}\nab^2v]\|_{\calF_h}^2
& \lesssim \|[{\bf P}_{h}]\{\nab^2v\}\|_{\calF_h}^2 + \|\{{\bf P}_{h}\}[\nab^2v]\|_{\calF_h}^2 \\
& \lesssim \|[{\bf P}-{\bf P}_{h}]\{\nab^2v\}\|_{\calF_h}^2 + \|[\nab^2v]\|_{\calF_h}^2 \\
& \lesssim h\|\nab^2v\|_{\calT_h}^2 + \|[\nab^2v]\|_{\calF_h}^2.
\end{split}
\end{align}
We then use 
\eqref{lem-7.2} to obtain
\begin{align}
\|[\Delta_{\Gamma_h}v]\|_{\calF_h}^2
& \lesssim h^2\|\Delta_{\Gamma_h}v\|_{\calK_h}^2 + \|[\nab^2 v]\|_{\calF_h}^2. \label{eqn:DeltaJumpControl}
\end{align}
Combining \eqref{7.13} and \eqref{eqn:DeltaJumpControl} yields the desired result \eqref{7.12}.
\end{proof}

\subsection{Coercivity and Continuity}

\begin{lemma}[Coercivity]
The discrete bilinear form is coercive with respect to the discrete energy norm, i.e.,
\begin{align}
\|v\|_h^2
& \lesssim A_h(v, v) \qquad \forall v \in V_h, \label{3.11}
\end{align}
provided $\sigma$ is sufficient large and $\gamma>0$.
\end{lemma}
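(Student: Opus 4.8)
The plan is to expand $A_h(v,v)$ directly and observe that every constituent of $\|v\|_h^2$ reappears with a favorable sign except the off-diagonal consistency term, which then has to be absorbed into the positive contributions. Using symmetry of the $L^2$ inner products, one has
\begin{align*}
A_h(v,v)
&= \|\Delta_{\Gamma_h}v\|_{\calK_h}^2 - 2(\{\Delta_{\Gamma_h}v\}, \bmu_h \cdot [\nab_{\Gamma_h}v])_{\calE_h} + \frac{\sigma}{h}\|\bmu_h\cdot[\nab_{\Gamma_h}v]\|_{\calE_h}^2 \\
&\qquad + \gamma\|[\nab v]\|_{\calF_h}^2 + \gamma\|[\nab^2v]\|_{\calF_h}^2,
\end{align*}
so the edge- and facet-jump penalties immediately reproduce the $h^{-1}\|\bmu_h\cdot[\nab_{\Gamma_h}v]\|_{\calE_h}^2$ and $\|[\nab v]\|_{\calF_h}^2$ norm terms, and the only genuine work is bounding the cross term.

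First I would apply Cauchy--Schwarz followed by a weighted Young's inequality, splitting the $h$-scaling symmetrically so the gradient-jump factor matches the penalty scaling:
\[
2\bigl|(\{\Delta_{\Gamma_h}v\}, \bmu_h \cdot [\nab_{\Gamma_h}v])_{\calE_h}\bigr|
\le \epsilon\, h\|\{\Delta_{\Gamma_h}v\}\|_{\calE_h}^2 + \frac{1}{\epsilon}\, h^{-1}\|\bmu_h\cdot[\nab_{\Gamma_h}v]\|_{\calE_h}^2.
\]
The last term is of the same form as the penalty contribution and is absorbed once $\sigma > 1/\epsilon$. For the first term, the key step is to control the edge average by the bulk Laplace--Beltrami norm: since $\{\Delta_{\Gamma_h}v\}$ on an edge is dominated by the traces of $\Delta_{\Gamma_h}v$ on the adjacent element boundaries, one has $\|\{\Delta_{\Gamma_h}v\}\|_{\calE_h}^2 \lesssim \|\Delta_{\Gamma_h}v\|_{\p\calK_h}^2$, whereupon the trace inequality \eqref{7.12} gives $h\|\{\Delta_{\Gamma_h}v\}\|_{\calE_h}^2 \lesssim \|\Delta_{\Gamma_h}v\|_{\calK_h}^2 + \|[\nab^2v]\|_{\calF_h}^2$.

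Substituting these bounds back, I would arrive at a lower bound of the form
\begin{align*}
A_h(v,v)
&\ge (1-\epsilon C)\|\Delta_{\Gamma_h}v\|_{\calK_h}^2 + \Bigl(\sigma - \tfrac{1}{\epsilon}\Bigr)h^{-1}\|\bmu_h\cdot[\nab_{\Gamma_h}v]\|_{\calE_h}^2 \\
&\qquad + \gamma\|[\nab v]\|_{\calF_h}^2 + (\gamma-\epsilon C)\|[\nab^2v]\|_{\calF_h}^2,
\end{align*}
and the conclusion follows by fixing $\epsilon$ small enough relative to the fixed $\gamma > 0$ so that $1-\epsilon C \ge \tfrac12$ and $\gamma - \epsilon C \ge \tfrac{\gamma}{2}$, and then choosing $\sigma > 1/\epsilon$.

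The step I expect to be the main obstacle is the reliance on \eqref{7.12}. Unlike the standard Euclidean $C^0$ IP argument, the surface mesh $\calK_h$ is not shape-regular and may contain arbitrarily small elements, so the edge-to-element trace bound is not automatic and in fact leaks a Hessian-jump contribution $\|[\nab^2v]\|_{\calF_h}^2$ into the estimate. This is precisely why the bulk Hessian-jump penalty in $s_h$ is indispensable: the surplus term $\gamma\|[\nab^2v]\|_{\calF_h}^2$ is what absorbs that leakage, which is the structural reason coercivity requires $\gamma > 0$ alongside $\sigma$ large.
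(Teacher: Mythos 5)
Your proof is correct and follows essentially the same route as the paper's: expand $A_h(v,v)$, estimate the cross term by Cauchy--Schwarz and Young's inequality with parameter $\epsilon$, and control $h\|\{\Delta_{\Gamma_h}v\}\|_{\calE_h}^2$ via $\|\Delta_{\Gamma_h}v\|_{\p\calK_h}^2$ and the trace inequality \eqref{7.12}, absorbing the leaked $\|[\nab^2 v]\|_{\calF_h}^2$ term into the $\gamma$-penalty before fixing $\epsilon$ and taking $\sigma$ large. Your closing remark on why the bulk Hessian-jump penalty (hence $\gamma>0$) is structurally indispensable on a non-shape-regular surface mesh is precisely the role \eqref{7.12} plays in the paper's argument.
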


\begin{proof}
Starting from the definition of $A_h$ and applying Cauchy's inequality with $\varepsilon$, we have
\begin{align*}
A_h(v, v)
& \gtrsim \|\Delta_{\Gamma_h}v\|_{\calK_h}^2 - \varepsilon h\|\{\Delta_{\Gamma_h}v\}\|_{\calE_h}^2 + \frac{\sigma - \varepsilon^{-1}}{h}\|\bmu_h \cdot [\nab_{\Gamma_h}v]\|_{\calE_h}^2 +
\gamma(\|[\nab v]\|_{\calF_h}^2 + \|[\nab^2v]\|_{\calF_h}^2), 
\end{align*}
for all $\varepsilon>0$.
Employing \eqref{7.12}, we immediately see that
\[
A_h(v, v) \gtrsim \|v\|_h^2.
\]
when choosing $\varepsilon$ small and $\sigma$
large enough.
\end{proof}

\begin{lemma}[Boundedness]
The discrete bilinear form is bounded in the following sense:
\begin{align}
A_h(v, w)
& \lesssim \tbar{v}_{h}\tbar{w}_h \qquad \forall v, w \in (H^3(\Gamma))^e + V_h. \label{bdd1}
\end{align}
Moreover,
\begin{align} \label{bdd2}
a_h^\ell(v,w)\lesssim \tbar{v}_{*,h}\tbar{w}_{*,h}\qquad \forall v,w\in H^3(\Gamma)+V_h^\ell.
\end{align}

\end{lemma}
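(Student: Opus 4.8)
The plan is to prove both estimates by bounding the bilinear forms term by term with the Cauchy--Schwarz inequality, distributing the powers of $h$ so that each resulting factor is absorbed into one of the two triple-bar norms. I would focus on $A_h(v,w)=a_h(v,w)+s_h(v,w)$ first; the bound for $a_h^\ell$ follows from the identical argument carried out on the exact surface, simply replacing $\Delta_{\Gamma_h}$, $\bmu_h$, $\calK_h$, $\calE_h$ by $\Delta_{\Gamma}$, $\bmu_\ell$, $\calK_h^\ell$, $\calE_h^\ell$ (the $\calF_h$-jump contributions present in $\|\cdot\|_{*,h}$ are simply not needed there, since $a_h^\ell$ has no $s_h$ part). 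All traces appearing below are well defined because each $v$ is either an $H^3$-extension or piecewise polynomial on the relevant elements.

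I would first dispatch the three routine contributions. The symmetric term $(\Delta_{\Gamma_h}v,\Delta_{\Gamma_h}w)_{\calK_h}$ is bounded directly by $\|\Delta_{\Gamma_h}v\|_{\calK_h}\|\Delta_{\Gamma_h}w\|_{\calK_h}\le \|v\|_h\|w\|_h$. The penalty term is handled by writing $\frac{\sigma}{h}(\bmu_h\cdot[\nab_{\Gamma_h}v],\bmu_h\cdot[\nab_{\Gamma_h}w])_{\calE_h}\le \sigma\,(h^{-1/2}\|\bmu_h\cdot[\nab_{\Gamma_h}v]\|_{\calE_h})(h^{-1/2}\|\bmu_h\cdot[\nab_{\Gamma_h}w]\|_{\calE_h})\lesssim\|v\|_h\|w\|_h$, which matches exactly the second component of $\|\cdot\|_h$. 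The two $s_h$ terms are immediate, since $([\nab v],[\nab w])_{\calF_h}$ and $([\nab^2v],[\nab^2w])_{\calF_h}$ are controlled by the corresponding $\calF_h$-jump contributions of the energy norm.

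The crux is the pair of consistency (cross) terms $(\{\Delta_{\Gamma_h}v\},\bmu_h\cdot[\nab_{\Gamma_h}w])_{\calE_h}$ and its companion with $v,w$ interchanged, and this is precisely where the extra boundary term $h\|\Delta_{\Gamma_h}v\|_{\p\calK_h}^2$ in $\tbar{\cdot}_h$ is required. Since $\Gamma_h$ is closed, every edge $E\in\calE_h$ is shared by exactly two elements, so the elementary bound $\|\{\Delta_{\Gamma_h}v\}\|_{\calE_h}^2\le\tfrac12\|\Delta_{\Gamma_h}v\|_{\p\calK_h}^2$ holds by convexity of $|\cdot|^2$ and summation over edges. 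Applying Cauchy--Schwarz on $\calE_h$ and splitting the powers of $h$ symmetrically then gives
\[
(\{\Delta_{\Gamma_h}v\},\bmu_h\cdot[\nab_{\Gamma_h}w])_{\calE_h}\lesssim \big(h^{1/2}\|\Delta_{\Gamma_h}v\|_{\p\calK_h}\big)\big(h^{-1/2}\|\bmu_h\cdot[\nab_{\Gamma_h}w]\|_{\calE_h}\big)\le \tbar{v}_h\,\tbar{w}_h,
\]
because the first factor is controlled by the boundary term of $\tbar{v}_h$ and the second by $\|w\|_h\le\tbar{w}_h$. The companion term is estimated identically with the roles of $v$ and $w$ reversed, and summing the five contributions yields \eqref{bdd1}.

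Finally I would repeat the same three-step estimate verbatim for $a_h^\ell$ on $\Gamma$: the analogous edge-averaging bound $\|\{\Delta_{\Gamma} v\}\|_{\calE_h^\ell}^2\le\tfrac12\|\Delta_{\Gamma} v\|_{\p\calK_h^\ell}^2$ together with the same $h$-splitting gives $a_h^\ell(v,w)\lesssim\tbar{v}_{*,h}\tbar{w}_{*,h}$, which is \eqref{bdd2}. I expect no genuine obstacle beyond bookkeeping; the only conceptual point--and the reason the statement is phrased with $\tbar{\cdot}_h$ rather than $\|\cdot\|_h$--is that the averaged-Laplacian factor in the consistency terms cannot be absorbed into $\|\cdot\|_h$ alone and must be paid for by the additional element-boundary term, exactly as in the standard Euclidean $C^0$ interior penalty analysis.
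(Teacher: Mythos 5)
Your proposal is correct and follows essentially the same route as the paper, whose proof is simply the observation that the bound follows from the Cauchy--Schwarz inequality and the definitions of $\tbar{\cdot}_h$ and $\tbar{\cdot}_{*,h}$. Your write-up just makes explicit the key bookkeeping the paper leaves implicit: pairing the averaged-Laplacian factor in the cross terms, via $\|\{\Delta_{\Gamma_h}v\}\|_{\calE_h}^2\le\tfrac12\|\Delta_{\Gamma_h}v\|_{\p\calK_h}^2$, with the weight $h^{1/2}$ so it is absorbed by the boundary term of $\tbar{\cdot}_h$, while the jump factor carries the weight $h^{-1/2}$.
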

\begin{proof}
The continuity estimates follow from the Cauchy-Schwarz inequality and the definitions
of $\tbar{\cdot}_h$ and $\tbar{\cdot}_{*, h}$.
\end{proof}


\subsection{Geometric Consistency Estimates}

Estimating the geometric errors 
in the Trace $C^0$ IP method requires
a few more technical lemmas and notation.
We introduce the following function spaces and norms
\begin{align*}
W
& = \{w \in C^0(\Gamma_h): w|_{K} \in H^3(K) \quad \forall K \in \calK_h\}, \\
W^\ell
& = \{w \in C^0(\Gamma): w|_{K^\ell} \in H^3(K^\ell) \quad \forall K^\ell \in \calK_h^\ell\}, \\
\tbar{w}_{2h}^2
& = \tbar{w}_h^2 +
\|\nab_{\Gamma_h}w\|_{\calK_h}^2 + h^2\|\nab_{\Gamma_h}^2w\|_{\calK_h}^2
+ h\|\nab_{\Gamma_h}w\|_{\partial\calK_h}^2 + h^3\|\nab_{\Gamma_h}^2w\|_{\partial\calK_h}^2,\\ 
\tbar{w}_{*, 2h}^2
& = \tbar{w}_{*,h}^2 
+ \|\nab_{\Gamma}w\|_{\calK^\ell_h}^2 + h^2\|\nab_{\Gamma}^2w\|_{\calK^\ell_h}^2
  + h\|\nab_{\Gamma}w\|_{\partial\calK_h^\ell}^2 + h^3\|\nab_{\Gamma}^2 w\|_{\p \calK_h^\ell}^2.
\end{align*}

We then have the following result
that shows that, for finite element functions, various norms 
defined on the discrete surface are controlled 
by the energy norm $\|{\cdot}\|_h$.
In particular, the norms $\|\cdot\|_h$, $\tbar{\cdot}_h$, and $\tbar{\cdot}_{2h}$
are equivalent on $V_{h,0}$.
Its proof is found in Appendix \ref{sec:ProofOfNormEstimates}.

\begin{lemma}[Norm Equivalences]\label{lem:NormEstimates}
There holds for all $v\in V_{h,0}$,
\begin{equation} \label{eqn:AllAtOnce}
\begin{split}
&h\|\nab v\|_{\calT_h}^2+ \|v\|_{\calK_h}^2 
 + \|\nab_{\Gamma_h} v\|_{\calK_h}^2
+h^2\|\nab_{\Gamma_h}^2v\|_{\calK_h}^2\\ 
&\qquad\qquad
+h \|v\|_{\p \calK_h}^2 
+ h \|\nab_{\Gamma_h} v\|_{\p \calK_h}^2
+h^3\|\nab_{\Gamma_h}^2v\|_{\p\calK_h}^2\lesssim \|v\|_h^2.
\end{split}
\end{equation}
Consequently, $\|\cdot\|_h$, $\tbar{\cdot}_h$, and $\tbar{\cdot}_{2h}$ are equivalent
on $V_{h,0}$.
\end{lemma}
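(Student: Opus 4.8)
The plan is to bound each of the seven terms on the left of \eqref{eqn:AllAtOnce} separately by $\|v\|_h^2$, always using the shape-regular background mesh $\calT_h$ as an intermediary so that the possibly degenerate surface mesh $\calK_h$ is touched only through the large-intersection covering property and \eqref{large-intersect}. I would first dispose of the two Hessian terms. Since $\nab_{\Gamma_h}^2 v = {\bf P}_h\nab^2 v\,{\bf P}_h$ is controlled pointwise by $\nab^2 v$ and is piecewise constant on $\calK_h$, the interior term $h^2\|\nab_{\Gamma_h}^2 v\|_{\calK_h}^2$ follows from \eqref{inverse-est-3} and the bulk Hessian bound \eqref{lem-7.2}, while the boundary term $h^3\|\nab_{\Gamma_h}^2 v\|_{\partial\calK_h}^2$ follows by repeating the covering/\eqref{F-1} argument of \eqref{7.12} (which is valid for piecewise constants) to reduce it to the interior term.

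The crux is the bulk gradient $h\|\nab v\|_{\calT_h}^2$. I would split $\nab v = \bg_0 + (\nab v - \bg_0)$ with $\bg_0 := \lambda_{\Gamma_h}(\nab v)$ the (constant) surface average. Applying \eqref{7.1} componentwise to $\nab v$, using that $v\in V_h$ is continuous (so $[\nab v]$ has only a normal component, with $\|[\nab v]\|_{\calF_h}\le\|v\|_h$) and that $\nab_{\Gamma_h}\nab v = {\bf P}_h\nab^2 v$, together with \eqref{lem-7.2}, yields $h\|\nab v - \bg_0\|_{\calT_h}^2\lesssim\|v\|_h^2$. The delicate part, and the \emph{main obstacle}, is the constant part $\bg_0$: a globally affine $v$ has vanishing Hessian and no Laplace--Beltrami contribution, so the surface-normal component of $\bg_0$ is invisible to every term of $\|v\|_h$ except the co-normal penalty $h^{-1}\|\bmu_h\cdot[\nab_{\Gamma_h}v]\|_{\calE_h}^2$. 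I would show that the $\calO(h)$ kinks of $\Gamma_h$ across its edges make $[{\bf P}_h]\bg_0\neq 0$ and that, after summation over $\calE_h$ and subtraction of the already-controlled oscillating and $[\nab v]$ contributions, this penalty controls $|\bg_0|^2$; this is a nondegeneracy argument in the spirit of the ``$A\mapsto\|{\bf P}:A\|_\Gamma$ is a norm'' claim in Lemma~\ref{lem:72}, relying on the global geometry of the closed surface (surjectivity of the Gauss map) to rule out cancellation. Since $|\Omega_h|\approx h$, this gives $h\|\bg_0\|_{\calT_h}^2\approx h^2|\bg_0|^2\lesssim\|v\|_h^2$, and combining the two pieces controls $h\|\nab v\|_{\calT_h}^2$.

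With the bulk gradient in hand, the lower-order surface terms follow from a Poincar\'e/integration-by-parts argument that uses the zero-mean constraint defining $V_{h,0}$. Applying \eqref{7.1} to $v$ itself (where $[v]=0$ and $\lambda_{\Gamma_h}(v)=0$) and then \eqref{inverse-est-3} gives $\|v\|_{\calK_h}^2\lesssim\|\nab_{\Gamma_h}v\|_{\calK_h}^2 + h\|\nab v\|_{\calT_h}^2$, reducing the $L^2$ term to the tangential gradient term. For the latter I would integrate by parts on each flat $K$, using that $\Gamma_h$ is closed and $v$ continuous, to obtain $\|\nab_{\Gamma_h}v\|_{\calK_h}^2 = -(\Delta_{\Gamma_h}v,v)_{\calK_h} + (v,\bmu_h\cdot[\nab_{\Gamma_h}v])_{\calE_h}$, where the co-normal convention of the paper makes the edge term exact with $v$ continuous. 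Estimating the right-hand side by Cauchy--Schwarz, bounding the edge norm $\|v\|_{\calE_h}$ by \eqref{trace-inequ-3} and \eqref{7.1}, and absorbing then yields $\|\nab_{\Gamma_h}v\|_{\calK_h}^2\lesssim\|v\|_h^2$ and, in turn, $\|v\|_{\calK_h}^2\lesssim\|v\|_h^2$ and the boundary term $h\|v\|_{\partial\calK_h}^2\lesssim\|v\|_h^2$.

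Finally the boundary gradient term $h\|\nab_{\Gamma_h}v\|_{\partial\calK_h}^2$ must be kept tangential, since passing through the full bulk gradient loses a factor of $h$; I would again invoke the covering, transfer each edge contribution to a shape-regular good cell $K_x$ via \eqref{F-1}--\eqref{F-2} and \eqref{large-intersect}, apply a trace inequality on the now nondegenerate $K_x$, and reduce to the already-controlled interior quantities $\|\nab_{\Gamma_h}v\|_{\calK_h}^2$ and $h^2\|\nab_{\Gamma_h}^2 v\|_{\calK_h}^2$ plus face jumps. Summing the seven estimates gives \eqref{eqn:AllAtOnce}, and the asserted equivalence of $\|\cdot\|_h$, $\tbar{\cdot}_h$, and $\tbar{\cdot}_{2h}$ on $V_{h,0}$ is then immediate, since every extra term in $\tbar{\cdot}_h$ and $\tbar{\cdot}_{2h}$ appears on the left of \eqref{eqn:AllAtOnce} while the reverse bounds are trivial. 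Throughout, the one genuinely nonroutine point is the second paragraph's control of the surface-normal part of the average gradient, which is the only place where the global curvature of $\Gamma_h$ enters in an essential way.
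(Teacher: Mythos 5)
Your architecture largely coincides with the paper's own proof (Appendix~\ref{sec:ProofOfNormEstimates}): the splitting $\nab v = \bg_0 + (\nab v - \bg_0)$ with $\bg_0 = \lambda_{\Gamma_h}(\nab v)$, the use of \eqref{7.1} and \eqref{lem-7.2} for the oscillating part and for both Hessian terms, the covering/\eqref{F-1} argument for $h\|\nab_{\Gamma_h}v\|_{\p\calK_h}^2$, and the integration-by-parts identity $\|\nab_{\Gamma_h}v\|_{\calK_h}^2 = -(\Delta_{\Gamma_h}v,v)_{\calK_h} + (v,\bmu_h\cdot[\nab_{\Gamma_h}v])_{\calE_h}$ with Cauchy--Schwarz and absorption are exactly Steps 1--5 of the paper. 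The gap is in your second paragraph: the constant part $\bg_0$. You propose to prove a \emph{lower} bound on the co-normal penalty, namely that the $\calO(h)$ kinks of $\Gamma_h$ force $h^{-1}\|\bmu_h\cdot[{\bf P}_h]\bg_0\|_{\calE_h}^2 \gtrsim |\bg_0|^2$ after subtracting controlled terms. This is left as a sketch, and it is far harder than you indicate: the jumps $[\bn_h]$ across individual edges admit no lower bound (they depend on how $\Gamma_h$ cuts the background mesh, and vanish on flat portions of $\Gamma$); a per-edge expansion shows $\bmu_E\cdot[{\bf P}_h]\bg_0 \approx (\bmu_E\cdot[\bn_h])(\bn\cdot\bg_0)$, so each edge sees only the normal component of $\bg_0$ weighted by an uncontrolled kink factor; and converting this into a bound uniform in $h$ and in the cut configuration would require a global aggregation over the closed surface that you do not supply. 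As written, this step is a conjecture, not a proof, and it is the step your whole bulk-gradient bound rests on.

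The paper's resolution is structural rather than geometric, and much simpler. It bounds $\|\bg_0\|_{\calT_h}^2 \lesssim h\|{\bf P}_h\bg_0\|_{\calK_h}^2$ using the \emph{vector} analogue of the claim in Lemma~\ref{lem:72}: a constant vector cannot be tangentially invisible on a closed surface (surjectivity of the Gauss map makes ${\bm c}\mapsto\|{\bf P}{\bm c}\|_\Gamma$ a norm, and then \eqref{eqn:PPhDiff} replaces ${\bf P}$ by ${\bf P}_h$). It then compares ${\bf P}_h\bg_0$ with $\nab_{\Gamma_h}v$ and simply \emph{tolerates} the resulting term $h\|\nab_{\Gamma_h}v\|_{\calK_h}^2$ on the right-hand side of Steps 1--3; only at the end does the integration-by-parts argument (your own third paragraph) show $\|\nab_{\Gamma_h}v\|_{\calK_h}^2 \lesssim \|v\|_h^2$ and retroactively close every earlier bound. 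Your ordering --- finishing the bulk-gradient estimate before touching any surface term --- is precisely what forces you into the intractable kink-nondegeneracy claim, since at that stage $\|\nab_{\Gamma_h}v\|_{\calK_h}$ is not yet available. If you replace your treatment of $\bg_0$ by the tangential-projection argument and carry $\|\nab_{\Gamma_h}v\|_{\calK_h}^2$ along until the final absorption, the remainder of your proof goes through and is essentially the paper's.
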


\begin{lemma} \label{251116-1}
There holds $\tbar{w^\ell}_{*,2h} \approx \tbar{w}_{2h}$ for all $w\in W$.
Moreover, there holds $\tbar{\phi}_{*,2h}\lesssim \|\phi\|_{H^4(\Gamma)}$ for all $\phi\in H^4(\Gamma)$.
\end{lemma}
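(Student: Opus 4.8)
The plan is to establish the norm equivalence $\tbar{w^\ell}_{*,2h} \approx \tbar{w}_{2h}$ by matching each of the constituent terms in the two norms, using the change-of-variable and chain-rule estimates collected in Section \ref{sec-prelim}. First I would recall the definitions of $\tbar{\cdot}_{2h}$ and $\tbar{\cdot}_{*,2h}$ and organize the comparison term-by-term: the $\Delta_{\Gamma_h}$-versus-$\Delta_\Gamma$ volume and boundary terms, the co-normal jump terms on edges, the facet-jump terms $\|[\nab v]\|_{\calF_h}^2$ and $\|[\nab^2 v]\|_{\calF_h}^2$, and the added gradient/Hessian terms on $\calK_h$ and $\p\calK_h$. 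The key observation is that since $w$ is prescribed directly on $\Gamma_h$ (not as an extension), we have $w = (w^\ell)^e$, so applying the equivalences \eqref{250921-1}, \eqref{eqn:GradEquivBound}, \eqref{eqn:HessEquivBound}, \eqref{eqn:HessEquivBound2}, \eqref{eqn:LapEquivBound}, and \eqref{eqn:LapEquivBound2} to $v = w^\ell$ lets me pass freely between the discrete- and exact-surface quantities. The facet terms $\|[\nab v]\|_{\calF_h}^2$ and $\|[\nab^2 v]\|_{\calF_h}^2$ appear identically in both norms (they are defined on the bulk facets $\calF_h$, independent of the surface), so they require no conversion.

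The first step is to handle the Laplace-Beltrami terms. Using \eqref{eqn:LapEquivBound} and \eqref{eqn:LapEquivBound2}, the terms $\|\Delta_{\Gamma_h} w\|_{\calK_h}^2$ and $\|\Delta_\Gamma w^\ell\|_{\calK_h^\ell}^2$ differ only by lower-order contributions of the form $h^2\|\nab_\Gamma^2 w^\ell\|^2$ and $h\|\nab_\Gamma w^\ell\|^2$; crucially, these lower-order terms are \emph{already present} (with matching powers of $h$) in the enriched norm $\tbar{\cdot}_{*,2h}$, so they can be absorbed rather than creating uncontrolled error. The same applies to the boundary Laplace-Beltrami terms $h\|\Delta_{\Gamma_h} w\|_{\p\calK_h}^2$ via the $\p K$/$\p K^\ell$ versions of these estimates. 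The second step treats the gradient and Hessian terms directly through \eqref{eqn:GradEquivBound}, \eqref{eqn:HessEquivBound}, and \eqref{eqn:HessEquivBound2}, again observing that the remainder terms produced by the chain rule are subordinate to terms that the $2h$-norm already contains.

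The third step handles the co-normal edge terms $h^{-1}\|\bmu_h\cdot[\nab_{\Gamma_h} w]\|_{\calE_h}^2$ versus $h^{-1}\|\bmu_\ell\cdot[\nab_\Gamma w^\ell]\|_{\calE_h^\ell}^2$. This is the subtlest part, since the co-normals $\bmu_h$ and $\bmu_\ell$ do not correspond exactly under the closest-point projection (indeed the excerpt explicitly notes $\bmu_{E^\ell}\neq \bmu_E^\ell$), and the gradient conversion \eqref{250914-1} on edges introduces the curvature-weighted projection $\bf P_h(\bf P - d\bf H)$. I expect this co-normal jump comparison to be the main obstacle: one must carefully account both for the $\calO(h)$ discrepancy between $\bmu_h$ and the pulled-back $\bmu_\ell$ and for the geometric distortion of the gradient, then verify that the resulting error, after multiplication by $h^{-1}$, is still controlled by the gradient terms $\|\nab_{\Gamma_h} w\|_{\p\calK_h}^2$ (with the matching $h$-weight) that were deliberately added to $\tbar{\cdot}_{2h}$. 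Collecting these bounds in both directions yields the two-sided equivalence. Finally, for the second assertion, I would bound $\tbar{\phi}_{*,2h}\lesssim \|\phi\|_{H^4(\Gamma)}$ for $\phi\in H^4(\Gamma)$: the volume and edge terms follow from trace inequalities on the shape-regular representatives guaranteed by the Large Intersection Covering Property together with the fact that the $H^4$ regularity controls $\Delta_\Gamma\phi\in H^2(\Gamma)$ (hence its traces on $\calK_h^\ell$ and $\p\calK_h^\ell$), while the enrichment terms involving $\nab_\Gamma\phi$ and $\nab_\Gamma^2\phi$ are dominated by $\|\phi\|_{H^2(\Gamma)}\le\|\phi\|_{H^4(\Gamma)}$.
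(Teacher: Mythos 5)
Your treatment of the first assertion follows the paper's proof essentially verbatim in structure: the chain-rule equivalences \eqref{eqn:GradEquivBound}, \eqref{eqn:HessEquivBound2}, \eqref{eqn:LapEquivBound2} reduce everything to the co-normal edge term, which is then compared to $h^{-1}\|\bmu_h\cdot[\nab_{\Gamma_h}w]\|_{\calE_h}^2$ via a geometric consistency estimate and absorbed for small $h$ (the paper does this with \eqref{251117-4}, whose error bound is $h\tbar{w}_{*,2h}$ rather than a bound by the gradient terms alone, but the absorption mechanism is the same). You correctly identify the co-normal term as the crux, so this part is fine, modulo the fact that you only describe, rather than prove, the needed co-normal estimate --- which the paper itself outsources to \cite{neilan2025c0}.

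The second assertion is where your proposal has a genuine gap: the Large Intersection Covering Property is the wrong tool for a general $\phi\in H^4(\Gamma)$. That property is only effective in combination with inverse estimates and the jump-comparison Lemma \ref{lem:NeighborCompare} (\eqref{F-1}--\eqref{F-2}), all of which rely on $v$ being a piecewise polynomial; none of this machinery lets you move norms of a Sobolev function between neighboring cut elements or off the cut skeleton. The problematic terms are precisely the boundary ones, $h\|\Delta_\Gamma\phi\|_{\p\calK_h^\ell}^2$, $h\|\nab_\Gamma\phi\|_{\p\calK_h^\ell}^2$, and $h^3\|\nab_\Gamma^2\phi\|_{\p\calK_h^\ell}^2$: the curvilinear elements $K^\ell$ and their edges are not shape regular, so no elementwise trace inequality on $K^\ell$ is available. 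The paper's mechanism is to pass to the extension $\phi^e$, run the trace inequalities through the \emph{shape-regular background mesh} --- edge inside face via \eqref{trace-inequ-3}, face inside tetrahedron via \eqref{trace-inequ-1} --- and then convert the resulting bulk norms $\|\nab^j\phi^e\|_{\calT_h}$ back to surface norms using Lemma \ref{lem:deltaLem}, i.e.\ \eqref{*}; see \eqref{251103-5}--\eqref{251103-6}. This route also shows that your claim that the enrichment terms are ``dominated by $\|\phi\|_{H^2(\Gamma)}$'' is false: each trace step costs derivatives in the bulk, so the bound for $h^3\|\nab_\Gamma^2\phi\|_{\p\calK_h^\ell}^2$ genuinely consumes $\|\nab^4\phi^e\|_{\calT_h}$, hence the full $\|\phi\|_{H^4(\Gamma)}$, and even the gradient boundary term requires $H^3(\Gamma)$. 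Without replacing the covering-property argument by the extension-plus-bulk-trace argument, the second half of your proof does not go through.
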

\begin{proof}
Comparing the definitions of $\tbar{\cdot}_{*, 2h}$ and $\tbar{\cdot}_{2h}$, based on \eqref{eqn:LapEquivBound2}, \eqref{eqn:GradEquivBound}, and \eqref{eqn:HessEquivBound2}, we observe that it is sufficient to focus on the term $h^{-1}\|\bmu_{\ell} \cdot [\nab_{\Gamma}w^\ell]\|_{\calE^\ell_h}^2$. By \eqref{250921-1} and \eqref{251117-4},
\begin{align*}
& h^{-1}\|\bmu_{\ell} \cdot [\nab_{\Gamma}w^\ell]\|_{\calE^\ell_h}^2 \\
& \lesssim h^{-1}\|\bmu_{\ell} \cdot [\nab_{\Gamma}w^\ell] - (\bmu_h \cdot [\nab_{\Gamma_h}w])^\ell\|_{\calE^\ell_h}^2 + h^{-1}\|(\bmu_h \cdot [\nab_{\Gamma_h}w])^\ell\|_{\calE^\ell_h}^2 \\
& \lesssim h^{-1}\|(\bmu_{\ell} \cdot [\nab_{\Gamma}w^\ell])^e - \bmu_h \cdot [\nab_{\Gamma_h}w]\|_{\calE_h}^2 + h^{-1}\|\bmu_h \cdot [\nab_{\Gamma_h}w]\|_{\calE_h}^2 \\
& \lesssim h^2\tbar{w^\ell}_{*, 2h}^2 + \tbar{w}_{2h}^2,
\end{align*}
which implies $\tbar{w^\ell}_{*, 2h}^2 \lesssim h^2\tbar{w^\ell}_{*, 2h}^2 + \tbar{w}_{2h}^2$, and therefore $\tbar{w^\ell}_{*, 2h} \lesssim \tbar{w}_{2h}$.

To prove the second assertion,
we use \eqref{eqn:GradEquivBound}, \eqref{trace-inequ-3}, \eqref{trace-inequ-1}, and \eqref{*} 
to obtain, for all $\phi\in H^4(\Gamma)$.
\begin{align}
\|\Delta_{\Gamma}\phi\|_{\calK^\ell_h}^2 + \|\nab_{\Gamma}\phi\|_{\calK^\ell_h}^2 + h^2\|\nab_{\Gamma}^2\phi\|_{\calK^\ell_h}^2 + h^{-1}\|\bmu_{\ell} \cdot [\nab_{\Gamma}\phi]\|_{\calE^\ell_h}^2
+h\|\nab_{\Gamma}\phi\|_{\partial\calK_h^\ell}^2
\lesssim \|\phi\|_{H^4(\Gamma)}^2, \label{251103-2}
\end{align}
By \eqref{eqn:HessEquivBound2}, \eqref{251103-2}, \eqref{trace-inequ-3}, \eqref{trace-inequ-1}, and \eqref{*},
\begin{align}
\begin{split} \label{251103-5}
\|\nab_{\Gamma}^2\phi\|_{\partial\calK^\ell_h}
& \lesssim \|\nab_{\Gamma_h}^2\phi^e\|_{\partial\calK_h} + h\|\nab_{\Gamma_h}\phi^e\|_{\partial\calK_h} \\
& \lesssim \|\nab^2\phi^e\|_{\partial\calK_h} + h^{\frac{1}{2}}\|\phi\|_{H^4(\Gamma)} \\
& \lesssim h^{-1}\|\nab^2\phi^e\|_{\calT_h} + \|\nab^3\phi^e\|_{\calT_h}  + h\|\nab^4\phi^e\|_{\calT_h}
 + h^{\frac{1}{2}}\|\phi\|_{H^4(\Gamma)} \\
& \lesssim h^{-\frac{1}{2}}\|\phi\|_{H^4(\Gamma)}.
\end{split}
\end{align}
Likewise, using \eqref{eqn:LapEquivBound2}, \eqref{251103-5}, and \eqref{251103-2} yields
\begin{align}
\begin{split} \label{251103-6}
\|\Delta_{\Gamma}\phi\|_{\p \calK^\ell_h}
& \lesssim \|\Delta_{\Gamma_h}\phi^e\|_{\p \calK_h} + h^2\|\nab_{\Gamma_h}^2\phi^e\|_{\partial\calK_h} + h\|\nab_{\Gamma_h}\phi^e\|_{\partial\calK_h} \\
& \lesssim \|\nab^2\phi^e\|_{\partial\calK_h} + h^{\frac{1}{2}}\|\phi\|_{H^4(\Gamma)} \\
& \lesssim h^{-\frac{1}{2}}\|\phi\|_{H^4(\Gamma)}.
\end{split}
\end{align}
Combining \eqref{251103-2}--\eqref{251103-6} completes the proof.
\end{proof}

\begin{lemma} \label{251117-1}
There holds for all $w \in W^\ell$,
\begin{align}
\|(\Delta_\Gamma w)^e - \Delta_{\Gamma_h}w^e\|_{\calK_h}
& \lesssim h\tbar{w}_{*, 2h}, \label{251117-2} \\
h^{\frac{1}{2}}\|(\{\Delta_{\Gamma}w\})^e - \{\Delta_{\Gamma_h}w^e\}\|_{\calE_h}
& \lesssim h\tbar{w}_{*, 2h}, \label{251117-3} \\
h^{-\frac{1}{2}}\big\|(\bmu_{\ell} \cdot [\nab_\Gamma w])^e - \bmu_h \cdot [\nab_{\Gamma_h}w^e]\big\|_{\calE_h}
& \lesssim h\tbar{w}_{*, 2h}. \label{251117-4}
\end{align}
\end{lemma}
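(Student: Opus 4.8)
The plan is to derive all three estimates from the same two pointwise chain-rule identities, namely \eqref{LapChain} for $\Delta_{\Gamma_h}w^e$ and \eqref{250914-1} for $\nab_{\Gamma_h}w^e$, combined with the algebraic facts that the surface Hessian and the Weingarten map are tangential, i.e. $\bn^\intercal(\nab_\Gamma^2 w)^e = (\nab_\Gamma^2 w)^e\bn = 0$ and $\bn^\intercal{\bf H} = {\bf H}\bn = 0$ (since ${\bf H} = \nab^2 d$ and $\bn = \nab d$). For each quantity I would subtract the corresponding exact quantity and then exploit that every surviving term carries a geometric factor controlled by \eqref{eqn:GeomApprox}--\eqref{eqn:PPhDiff}: a factor $\bn_h - \bn = O(h)$, $d = O(h^2)$, or ${\bf P}-{\bf P}_h = O(h)$. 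After reducing to norms of $(\nab_\Gamma w)^e$ and $(\nab_\Gamma^2 w)^e$ on $\calK_h$ (resp. $\p\calK_h$), I convert these to norms of $\nab_\Gamma w$ and $\nab_\Gamma^2 w$ on $\calK_h^\ell$ (resp. $\p\calK_h^\ell$) via the change-of-variables equivalence \eqref{250921-1} applied componentwise, and finally recognize the outcome inside the definition of $\tbar{w}_{*,2h}$.

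For \eqref{251117-2} I would insert $\bn^\intercal(\nab_\Gamma^2 w)^e\bn = 0$ into the leading term of \eqref{LapChain}: writing $\bn_h = \bn + (\bn_h - \bn)$ and using that \emph{both} one-sided contractions against $\bn$ vanish, the term $\bn_h^\intercal(\nab_\Gamma^2 w)^e\bn_h$ collapses to $(\bn_h-\bn)^\intercal(\nab_\Gamma^2 w)^e(\bn_h-\bn) = O(h^2)|(\nab_\Gamma^2 w)^e|$. The $O(1)$ gradient term $(\bn\cdot\bn_h)\bn_h^\intercal{\bf H}(\nab_\Gamma w)^e$ reduces to $O(h)|(\nab_\Gamma w)^e|$ because $\bn^\intercal{\bf H}=0$, and the remaining terms already carry a factor $d=O(h^2)$. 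This gives the pointwise bound $|(\Delta_\Gamma w)^e - \Delta_{\Gamma_h}w^e|\lesssim h^2|(\nab_\Gamma^2 w)^e| + h|(\nab_\Gamma w)^e|$; integrating over $\calK_h$, converting via \eqref{250921-1}, and using $h\|\nab_\Gamma^2 w\|_{\calK_h^\ell}\le\tbar{w}_{*,2h}$ and $\|\nab_\Gamma w\|_{\calK_h^\ell}\le\tbar{w}_{*,2h}$ yields \eqref{251117-2}. Estimate \eqref{251117-3} is the same bound measured on $\calE_h\subset\p\calK_h$; since the average commutes with the extension, $(\{\Delta_\Gamma w\})^e - \{\Delta_{\Gamma_h}w^e\} = \{(\Delta_\Gamma w)^e - \Delta_{\Gamma_h}w^e\}$, and the boundary contributions $h^3\|\nab_\Gamma^2 w\|_{\p\calK_h^\ell}^2$ and $h\|\nab_\Gamma w\|_{\p\calK_h^\ell}^2$ in $\tbar{w}_{*,2h}^2$ absorb the prefactor $h^{1/2}$.

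The genuinely delicate estimate is \eqref{251117-4}, where the co-normals must be handled with care. Using the jump convention $\bmu_E\cdot[\nab_{\Gamma_h}w^e] = \sum_\pm \bmu_E^\pm\cdot\nab_{\Gamma_h}w^{e,\pm}$, the identity \eqref{250914-1}, the symmetry of ${\bf P}_h,{\bf P},{\bf H}$, and ${\bf P}_h\bmu_E^\pm = \bmu_E^\pm$ (as $\bmu_E^\pm$ is tangent to $K^\pm$), I would rewrite each side as $\bmu_E^\pm\cdot\nab_{\Gamma_h}w^{e,\pm} = (({\bf P}-d{\bf H})\bmu_E^\pm)\cdot(\nab_\Gamma w^\pm)^e$, so the full difference becomes $\sum_\pm a^\pm\cdot(\nab_\Gamma w^\pm)^e$ with $a^\pm := (\bmu_{E^\ell}^\pm)^e - ({\bf P}-d{\bf H})\bmu_E^\pm$. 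Splitting the one-sided gradients into their average and jump across $E$ recasts this as $(a^++a^-)\cdot\{(\nab_\Gamma w)^e\} + \tfrac12(a^+-a^-)\cdot[(\nab_\Gamma w)^e]$. Two geometric facts then drive the estimate: first, $a^\pm = O(h)$ individually (the projected discrete co-normal matches the exact co-normal up to the $O(h)$ errors in $\bn$ and in the edge tangent); second, and crucially, $a^+ + a^- = O(h^2)$, since the exact co-normals are \emph{exactly} antipodal ($\bmu_{E^\ell}^+ = -\bmu_{E^\ell}^-$ at each point of the smooth curve $E^\ell$) while the $O(h)$ part of $\bmu_E^+ + \bmu_E^-$ points in the normal direction and is annihilated to leading order by ${\bf P}$. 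For the jump part I would use that $w$ is $C^0$ across $E$, so $[\nab_\Gamma w]$ is purely co-normal and hence $\|[(\nab_\Gamma w)^e]\|_{\calE_h}\lesssim\|\bmu_\ell\cdot[\nab_\Gamma w]\|_{\calE_h^\ell}$.

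Assembling these, the average part contributes $h^{-1/2}\cdot h^2\|\nab_\Gamma w\|_{\p\calK_h^\ell} = h\cdot h^{1/2}\|\nab_\Gamma w\|_{\p\calK_h^\ell}\le h\tbar{w}_{*,2h}$, and the jump part contributes $h^{-1/2}\cdot h\|\bmu_\ell\cdot[\nab_\Gamma w]\|_{\calE_h^\ell} = h\cdot h^{-1/2}\|\bmu_\ell\cdot[\nab_\Gamma w]\|_{\calE_h^\ell}\le h\tbar{w}_{*,2h}$, proving \eqref{251117-4}. I expect the main obstacle to be establishing the second-order cancellation $a^+ + a^- = O(h^2)$: this is the one point where the crude $O(h)$ bound on the individual co-normal discrepancies is insufficient, and it requires carefully quantifying how the dihedral defect of $\Gamma_h$ across an edge enters the sum of co-normals and how ${\bf P}$ removes its leading part, presumably through the edge change-of-variables relations (the analogues of $\mu_E = |({\bf P}-d{\bf H})\bt_E|$). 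That is where the technical effort is concentrated.
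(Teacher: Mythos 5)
Your proposal is correct in structure, and the details you supply all check out. For \eqref{251117-2}--\eqref{251117-3} you are doing exactly what the paper does (its proof is a single sentence: ``follow from \eqref{LapChain} and \eqref{250921-1}''), just with the details written out: the tangentiality facts $\bn^\intercal(\nab_\Gamma^2 w)^e=(\nab_\Gamma^2 w)^e\bn=0$ and ${\bf H}\bn=0$, together with $|\bn_h-\bn|\lesssim h$ and $|d|\lesssim h^2$ from \eqref{eqn:GeomApprox}, give the pointwise bound $|(\Delta_\Gamma w)^e-\Delta_{\Gamma_h}w^e|\lesssim h^2|(\nab_\Gamma^2 w)^e|+h|(\nab_\Gamma w)^e|$, and the bookkeeping against the terms of $\tbar{\cdot}_{*,2h}$ (in particular the $h^3\|\nab_\Gamma^2 w\|_{\p\calK_h^\ell}^2$ and $h\|\nab_\Gamma w\|_{\p\calK_h^\ell}^2$ terms for the edge estimate) is exactly right. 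Where you genuinely diverge from the paper is \eqref{251117-4}: the paper does not prove it at all, but cites the companion SFEM paper \cite[(C.12)--(C.14)]{neilan2025c0}, whereas you reconstruct the argument. Your decomposition into $a^\pm=(\bmu_{E^\ell}^\pm)^e-({\bf P}-d{\bf H})\bmu_E^\pm$, the average/jump splitting, the use of ${\bf P}_h\bmu_E^\pm=\bmu_E^\pm$ and symmetry to move the co-normal through \eqref{250914-1}, and the observation that $[\nab_\Gamma w]$ is purely co-normal so that $|[(\nab_\Gamma w)^e]|=|(\bmu_\ell\cdot[\nab_\Gamma w])^e|$, are all sound and presumably mirror what is done in the cited appendix.

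The one step you leave open --- the second-order cancellation $a^++a^-=O(h^2)$ --- is a real omission in your write-up, but it is not new mathematics: it is precisely the estimate $\|{\bf P}\bmu_h-\bmu_\ell\|_{L^\infty(\p K)}\lesssim h^2$ from \cite[(3.9)]{neilan2025c0}, which this paper itself quotes and uses (in the estimate of $I_1$ in the proof of Lemma \ref{250902-2}, via the identity ${\bf P}(\bmu_{\p K}^++\bmu_{\p K}^-)=({\bf P}\bmu_{\p K}^+-\bmu_{\p K^\ell}^+)+({\bf P}\bmu_{\p K}^--\bmu_{\p K^\ell}^-)$, exploiting the exact antipodality $\bmu_{\p K^\ell}^++\bmu_{\p K^\ell}^-=0$). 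With that citation in hand, your computation closes: $a^++a^-=-{\bf P}(\bmu_E^++\bmu_E^-)+d{\bf H}(\bmu_E^++\bmu_E^-)=O(h^2)+O(h^3)$, using also $\|\bmu_{\p K}^\pm-\bmu_{\p K^\ell}^\pm\|_{L^\infty}\lesssim h$ (which the paper uses in the proof of Lemma \ref{lem:Phbn}) for the $O(h^3)$ term and for your claim $|a^\pm|\lesssim h$. So your proof is complete modulo a reference the paper already relies on elsewhere; you did not need to re-derive the dihedral-defect analysis you anticipated.
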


\begin{proof}
The first two estimates \eqref{251117-2}--\eqref{251117-3} follow from
\eqref{LapChain} and \eqref{250921-1}. A proof of the third estimate \eqref{251117-4} is given in \cite[(C.12)--(C.14)]{neilan2025c0} and \eqref{250921-1}.
\end{proof}

\begin{lemma}\label{lem:aDiff1}
There holds for $v, w \in W$,
\begin{align}
|a_h^\ell(v^\ell, w^\ell) - a_h(v, w)|
& \lesssim h \tbar{v^\ell}_{*, 2h}\tbar{w^\ell}_{*, 2h}. \label{251117-16}
\end{align}
\end{lemma}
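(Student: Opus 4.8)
The plan is to pull the whole of $a_h^\ell(v^\ell,w^\ell)$ back to the discrete surface and then reduce everything to the consistency estimates of Lemma \ref{251117-1}. Using the change-of-variable formulas $\int_{K^\ell} g = \int_K g^e\mu_h$ and $\int_{E^\ell} g = \int_E g^e\mu_E$, together with $(v^\ell)^e = v$ and $(w^\ell)^e = w$, the difference $a_h^\ell(v^\ell,w^\ell)-a_h(v,w)$ becomes a sum of four integrals over $\calK_h$ and $\calE_h$, one for each term of the bilinear forms. In each integral the integrand has the schematic form $p\,q\,\mu - \tilde p\,\tilde q$, where $p=(\cdot)^e$ is the extension of the exact-surface quantity, $\tilde p$ is the corresponding discrete quantity, and $\mu\in\{\mu_h,\mu_E\}$. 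I would split each integrand as
\[
pq\mu - \tilde p\tilde q = pq(\mu-1) + p(q-\tilde q) + (p-\tilde p)\tilde q,
\]
reducing matters to (i) a geometric factor error controlled by $|1-\mu_h|,|1-\mu_E|=\calO(h^2)$, and (ii) operator errors $\|p-\tilde p\|$ controlled by Lemma \ref{251117-1}.

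For the volume term $(\Delta_\Gamma v^\ell,\Delta_\Gamma w^\ell)_{\calK_h^\ell}-(\Delta_{\Gamma_h}v,\Delta_{\Gamma_h}w)_{\calK_h}$ I take $p=(\Delta_\Gamma v^\ell)^e$, $\tilde p=\Delta_{\Gamma_h}v$, $\mu=\mu_h$. The factor term is $\calO(h^2)$ after Cauchy--Schwarz and \eqref{250921-1}, while each cross term is estimated by \eqref{251117-2} applied to $v^\ell$ or $w^\ell$, contributing the factor $h$; the remaining factors are bounded by $\|(\Delta_\Gamma v^\ell)^e\|_{\calK_h}\approx\|\Delta_\Gamma v^\ell\|_{\calK_h^\ell}\lesssim\tbar{v^\ell}_{*,2h}$ and $\|\Delta_{\Gamma_h}w\|_{\calK_h}\le\|w\|_h\lesssim\tbar{w}_{2h}\approx\tbar{w^\ell}_{*,2h}$, the last equivalence being Lemma \ref{251116-1}.

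The edge terms are the delicate ones because of the $h^{-1}$ weight in the penalty and the traces of $\Delta$ entering the averages. For the penalty term I use \eqref{251117-4} (which supplies $h^{3/2}$) against the bounds $\|(\bmu_\ell\cdot[\nab_\Gamma v^\ell])^e\|_{\calE_h}\lesssim h^{1/2}\tbar{v^\ell}_{*,2h}$ and $\|\bmu_h\cdot[\nab_{\Gamma_h}w]\|_{\calE_h}\lesssim h^{1/2}\tbar{w^\ell}_{*,2h}$, read off from the $h^{-1}\|\bmu_\ell\cdot[\nab_\Gamma v^\ell]\|_{\calE_h^\ell}^2$ and $h^{-1}\|\bmu_h\cdot[\nab_{\Gamma_h}w]\|_{\calE_h}^2$ contributions to the energy norms; after multiplying by $\sigma/h$ every piece is $\lesssim h\,\tbar{v^\ell}_{*,2h}\tbar{w^\ell}_{*,2h}$. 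For the two consistency terms I pair the averaged-Laplacian factor $\{\Delta_{\Gamma_h}v\}$, bounded by $\|\{\Delta_{\Gamma_h}v\}\|_{\calE_h}\lesssim\|\Delta_{\Gamma_h}v\|_{\partial\calK_h}\lesssim h^{-1/2}\tbar{v}_{2h}\approx h^{-1/2}\tbar{v^\ell}_{*,2h}$ (and likewise for $(\{\Delta_\Gamma v^\ell\})^e$ on $\Gamma$), with the gradient-jump consistency estimates \eqref{251117-3}--\eqref{251117-4}; the crucial feature is that the $h^{-1/2}$ cost of the Laplacian trace is exactly compensated by the $h^{3/2}$ gained in \eqref{251117-4} and the $h^{1/2}$ gained in \eqref{251117-3}, so each piece is again $\lesssim h\,\tbar{v^\ell}_{*,2h}\tbar{w^\ell}_{*,2h}$. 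Summing the four contributions yields \eqref{251117-16}.

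The main obstacle is the bookkeeping of $h$-powers in the edge terms: the averages $\{\Delta_{\Gamma_h}v\}$ and $(\{\Delta_\Gamma v^\ell\})^e$ are only controlled at the cost of a negative power $h^{-1/2}$ from the element-boundary traces in $\tbar{\cdot}_h$, $\tbar{\cdot}_{*,h}$, and the prefactor $\sigma/h$ further erodes the budget, so obtaining the clean $\calO(h)$ rate requires these losses to be matched precisely by the half-order gains in \eqref{251117-3}--\eqref{251117-4} and by the $\calO(h^2)$ smallness of $1-\mu_E$. A secondary point requiring care is that $v,w\in W$ are not finite element functions, so the equivalences invoked must be the geometric ones of Lemma \ref{251116-1} rather than the finite-element equivalences of Lemma \ref{lem:NormEstimates}.
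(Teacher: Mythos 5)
Your proposal is correct and follows essentially the same route as the paper's proof: pair the four terms of the two bilinear forms, change variables to $\calK_h$ and $\calE_h$, split off the geometric factor via $|1-\mu_h|,|1-\mu_E|=\calO(h^2)$, and control the operator differences with Lemma \ref{251117-1}, using \eqref{250921-1} and Lemma \ref{251116-1} for the norm correspondences. The only cosmetic difference is your three-term splitting $pq\mu-\tilde p\tilde q = pq(\mu-1)+p(q-\tilde q)+(p-\tilde p)\tilde q$ versus the paper's four-term version, and your explicit $h$-power bookkeeping for the edge terms, which the paper leaves to the reader.
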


\begin{proof}
All terms in $a_h^\ell(v^\ell, w^\ell) - a_h(v, w)$ can be paired and rewritten as
\begin{align*}
& |a_h^\ell(v^\ell, w^\ell) - a_h(v, w)| \\
& \le \left|(\Delta_{\Gamma}v^\ell, \Delta_{\Gamma}w^\ell)_{\calK^\ell_h} - (\Delta_{\Gamma_h}v, \Delta_{\Gamma_h}w)_{\calK_h}\right| \\
& \qquad + \left|(\{\Delta_{\Gamma}v^\ell\}, \bmu_{\ell} \cdot [\nab_{\Gamma}w^\ell])_{\calE_h^\ell} - (\{\Delta_{\Gamma_h}v\}, \bmu_h \cdot [\nab_{\Gamma_h}w])_{\calE_h}\right| \\
& \qquad + \left|(\{\Delta_{\Gamma}w^\ell\}, \bmu_{\ell} \cdot [\nab_{\Gamma}v^\ell])_{\calE_h^\ell} - (\{\Delta_{\Gamma_h}w\}, \bmu_h \cdot [\nab_{\Gamma_h}v])_{\calE_h}\right| \\
& \qquad + \frac{\sigma}{h}\left|(\bmu_{\ell} \cdot [\nab_{\Gamma}v^\ell], \bmu_{\ell} \cdot [\nab_{\Gamma}w^\ell])_{\calE_h^\ell} - (\bmu_h \cdot [\nab_{\Gamma_h}v], \bmu_h \cdot [\nab_{\Gamma_h}w])_{\calE_h}\right| \\
& =: I + II + III + IV.
\end{align*}
By a change of the integration domain, adding and subtracting terms, we can rewrite $I$  as
\begin{align*}
I
& = \Big|((\mu_h - 1)(\Delta_{\Gamma}v^\ell)^e, (\Delta_{\Gamma}w^\ell)^e)_{\calK_h} \\
& \qquad + ((\Delta_{\Gamma}v^\ell)^e - \Delta_{\Gamma_h}v, (\Delta_{\Gamma}w^\ell)^e)_{\calK_h} + ((\Delta_{\Gamma}v^\ell)^e, (\Delta_{\Gamma}w^\ell)^e - \Delta_{\Gamma_h}w)_{\calK_h} \\
& \qquad - ((\Delta_{\Gamma}v^\ell)^e - \Delta_{\Gamma_h}v, (\Delta_{\Gamma}w^\ell)^e - \Delta_{\Gamma_h}w)_{\calK_h}\Big|.
\end{align*}
We then use $|\mu_h - 1| = \calO(h^2)$ and \eqref{251117-2} to obtain
\[
I
\lesssim h\tbar{v^\ell}_{*, 2h}\tbar{w^\ell}_{*, 2h}.
\]
The estimates for $II$, $III$, and $IV$ follow 
the same arguments, but using \eqref{251117-3}--\eqref{251117-4} instead of \eqref{251117-2}.
We omit the details.
\end{proof}

\begin{lemma} \label{251116-3}
There holds for all $v \in V_{h, 0}$,
\begin{align}
|l(v^\ell) - l_h(v)|
& \lesssim \|f-F^\ell_h\|_{\Gamma}\tbar{v}_{ h} \label{4.6},
\end{align}
where
\begin{equation} \label{FhDef}
F^\ell_h = \mu_h^{-1}f^\ell_h.
\end{equation}
\end{lemma}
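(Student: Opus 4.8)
The plan is to transport both functionals to the same surface and recognize their difference as a single $L^2$ pairing on $\Gamma$, after which a Cauchy--Schwarz step and a Poincar\'e-type bound finish the argument. First I would apply the change of variables formula $\int_\Gamma g = \int_{\Gamma_h} g^e \mu_h$ with $g = f\,v^\ell$; since lifting followed by extension is the identity on $\Gamma_h$, we have $(v^\ell)^e = v$, and hence $l(v^\ell) = (f,v^\ell)_\Gamma = \int_{\Gamma_h} f^e\, v\, \mu_h$. Subtracting $l_h(v) = \int_{\Gamma_h} f_h\, v$ then gives
\[
l(v^\ell) - l_h(v) = \int_{\Gamma_h} \big(\mu_h f^e - f_h\big)\, v.
\]

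Next I would absorb the Jacobian into the data error. Setting $F_h := \mu_h^{-1} f_h$ on $\Gamma_h$ (well defined since $|1-\mu_h| = \calO(h^2)$), we factor $\mu_h f^e - f_h = \mu_h(f^e - F_h)$, and because $(F_h^\ell)^e = F_h$ we may write $f^e - F_h = (f - F_h^\ell)^e$. Using $v = (v^\ell)^e$ and the multiplicativity of the extension, the integrand becomes $\mu_h\big((f - F_h^\ell)\,v^\ell\big)^e$, and the change of variables formula applied in reverse collapses the expression to a pairing on the exact surface,
\[
l(v^\ell) - l_h(v) = \big(f - F_h^\ell,\ v^\ell\big)_\Gamma.
\]
This is the heart of the proof: the definition $F_h^\ell = \mu_h^{-1} f_h^\ell$ is chosen precisely so that $\mu_h$ is cancelled and the consistency error in the data appears cleanly as $f - F_h^\ell$ measured on $\Gamma$.

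It then remains to apply Cauchy--Schwarz, $|l(v^\ell) - l_h(v)| \le \|f - F_h^\ell\|_\Gamma \|v^\ell\|_\Gamma$, and to control $\|v^\ell\|_\Gamma$ by $\tbar{v}_h$. By the elementwise $L^2$ norm equivalence \eqref{250921-1}, summed over $\calK_h$, we have $\|v^\ell\|_\Gamma \approx \|v\|_{\calK_h}$, and Lemma \ref{lem:NormEstimates} (specifically \eqref{eqn:AllAtOnce}) gives $\|v\|_{\calK_h} \lesssim \|v\|_h \le \tbar{v}_h$. Chaining these bounds yields \eqref{4.6}.

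The algebra above is short; the genuine obstacle is the final step $\|v\|_{\calK_h} \lesssim \tbar{v}_h$. Since the energy norm $\|\cdot\|_h$ contains only Laplacian, Hessian and jump terms and no zeroth-order contribution, this estimate is really a Poincar\'e-type inequality that depends essentially on the mean-zero constraint $v \in V_{h,0}$. Fortunately this is exactly what Lemma \ref{lem:NormEstimates} provides, so here the work reduces to invoking \eqref{eqn:AllAtOnce} rather than re-deriving it.
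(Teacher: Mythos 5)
Your proposal is correct and is essentially the paper's own proof, spelled out in full: the paper likewise uses the change of integration domain to write $l(v^\ell)-l_h(v)=(f-F_h^\ell,v^\ell)_\Gamma$ (which is exactly why $F_h^\ell$ is defined as it is), then applies Cauchy--Schwarz and Lemma \ref{lem:NormEstimates} to bound $\|v\|_{\Gamma_h}\lesssim\tbar{v}_h$. Your identification of the mean-zero constraint $v\in V_{h,0}$ as the essential ingredient behind the final Poincar\'e-type step is also exactly right.
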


\begin{proof}
A change of the integration domain and Lemma \ref{lem:NormEstimates}
give us
\begin{align*}
|l(v^\ell) - l_h(v)|
& = |(f-F^\ell_h, v^\ell)_\Gamma| \lesssim \|f-F^{\ell}_h\|_\Gamma \|v\|_{\Gamma_h}
\lesssim  \|f-F^{\ell}_h\|_\Gamma\tbar{v}_{h}.
\end{align*}
\end{proof}


\section{Convergence Analysis}\label{sec-converge}

\subsection{Energy Estimates}
To prove convergence of the numerical method
with respect to the $H^2$-type norm,
we first establish approximation results
of a Lagrange-type interpolant.
\begin{lemma} \label{lem:InterpFun}
Let $\pi_h:H^2(\Omega_h)\to V_{h}$ be the
quadratic Lagrange (nodal) interpolant, and define $\mathring{\pi}_h:H^2(\Omega_h)\to V_{h,0}$
such that
\[
\mathring{\pi}_h \phi = \pi_h \phi - \frac1{|\Gamma_h|} \int_{\Gamma_h} \pi_h \phi.
\]
Then there holds, for all $\phi\in H^4(\Gamma)$,
\begin{align} \label{4.8}
h \tbar{\mathring{\pi}_h \phi^e}_h+ \tbar{\phi^e - \mathring{\pi}_h \phi^e}_{2h}+ \tbar{\phi-(\mathring{\pi}_h \phi^e)^\ell}_{*,2h}\lesssim h \|\phi\|_{H^3(\Gamma)}+h^2 \|\phi\|_{H^4(\Gamma)}.
\end{align}
\end{lemma}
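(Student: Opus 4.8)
The plan is to exploit the fact that all three quantities on the left are \emph{seminorms} that annihilate additive constants: every term in $\tbar{\cdot}_h$, $\tbar{\cdot}_{2h}$, and $\tbar{\cdot}_{*,2h}$ involves only $\nab$, $\nab_{\Gamma_h}$, $\Delta_{\Gamma_h}$, $\nab_\Gamma$, $\Delta_\Gamma$, or jumps of these. Hence the mean-value shift defining $\mathring{\pi}_h$ drops out, and one may replace $\mathring{\pi}_h\phi^e$ by $\pi_h\phi^e$ throughout. Using $(\phi^e)^\ell=\phi$ and linearity of the lift, the third term becomes $\tbar{(\phi^e-\pi_h\phi^e)^\ell}_{*,2h}$. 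This collapses the whole estimate onto the single interpolation-error quantity $\tbar{\phi^e-\pi_h\phi^e}_{2h}$ on the discrete surface.

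First I would dispatch the third and first terms. Taking $w:=\phi^e-\pi_h\phi^e\in W$ (continuous and piecewise $H^3$, since $\phi^e$ is smooth and $\pi_h\phi^e\in V_h$), Lemma~\ref{251116-1} gives $\tbar{(\phi^e-\pi_h\phi^e)^\ell}_{*,2h}\approx\tbar{\phi^e-\pi_h\phi^e}_{2h}$. For the first term, since $\mathring{\pi}_h\phi^e\in V_{h,0}$, the norm equivalence of Lemma~\ref{lem:NormEstimates} yields $\tbar{\mathring{\pi}_h\phi^e}_h\approx\tbar{\mathring{\pi}_h\phi^e}_{2h}$, which a triangle inequality bounds by $\tbar{\phi^e-\pi_h\phi^e}_{2h}+\tbar{\phi^e}_{2h}$. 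The crucial point is that I would \emph{not} bound $\tbar{\phi^e}_{2h}$ by the crude $\|\phi\|_{H^4(\Gamma)}$ of Lemma~\ref{251116-1}, but instead show $\tbar{\phi^e}_{2h}\lesssim\|\phi\|_{H^3(\Gamma)}+h\|\phi\|_{H^4(\Gamma)}$: the interior and boundary Laplacian, gradient, and Hessian terms are controlled by $\|\phi\|_{H^3(\Gamma)}$ using the norm-correspondence estimates \eqref{250921-1}, \eqref{eqn:GradEquivBound}, \eqref{eqn:HessEquivBound}, \eqref{eqn:LapEquivBound}, the trace inequalities \eqref{trace-inequ-1}, \eqref{trace-inequ-3}, and Lemma~\ref{lem:deltaLem}; the facet jumps $[\nab\phi^e]$, $[\nab^2\phi^e]$ vanish because $\phi^e$ is smooth; and only the edge term requires $H^4$, being handled by \eqref{251117-4} together with $[\nab_\Gamma\phi]=0$ (the exact-surface gradient is single-valued), giving $h^{-1/2}\|\bmu_h\cdot[\nab_{\Gamma_h}\phi^e]\|_{\calE_h}\lesssim h\tbar{\phi}_{*,2h}\lesssim h\|\phi\|_{H^4(\Gamma)}$. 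The prefactor $h$ then turns these into $h\|\phi\|_{H^3(\Gamma)}+h^2\|\phi\|_{H^4(\Gamma)}$.

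It then remains to prove the central estimate $\tbar{\phi^e-\pi_h\phi^e}_{2h}\lesssim h\|\phi\|_{H^3(\Gamma)}+h^2\|\phi\|_{H^4(\Gamma)}$. Writing $v:=\phi^e-\pi_h\phi^e$, I would bound each constituent by combining the bulk quadratic interpolation bounds $\|\nab^j v\|_T\lesssim h^{3-j}\|\nab^3\phi^e\|_T$ for $j\le2$ (with $\nab^3 v=\nab^3\phi^e$), the trace inequalities \eqref{trace-inequ-1}--\eqref{trace-inequ-3}, and Lemma~\ref{lem:deltaLem} to pass from bulk seminorms of $\phi^e$ to surface norms of $\phi$ (each conversion supplying a factor $h^{1/2}$). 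The interior Laplacian, gradient, and Hessian terms then come out $\lesssim h\|\phi\|_{H^3(\Gamma)}$; the facet jumps reduce to $[\nab^j v]$ (again since $\phi^e$ is smooth) and are likewise $\lesssim h\|\phi\|_{H^3(\Gamma)}$; and the $h^2\|\phi\|_{H^4(\Gamma)}$ contribution emerges precisely from the boundary-trace terms $h^{1/2}\|\Delta_{\Gamma_h}v\|_{\p\calK_h}$ and $h^{3/2}\|\nab_{\Gamma_h}^2 v\|_{\p\calK_h}$, where the edge-to-face-to-tetrahedron trace cascade exposes $\|\nab^4\phi^e\|_T$.

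The hardest step will be the surface co-normal edge jump $h^{-1/2}\|\bmu_h\cdot[\nab_{\Gamma_h}v]\|_{\calE_h}$, which is \emph{unweighted} and for which the geometric route through \eqref{251117-4} would deliver only the too-large bound $\lesssim h\|\phi\|_{H^4(\Gamma)}$. Here I would instead use the tangency of the co-normals, ${\bf P}_h\bmu_E^\pm=\bmu_E^\pm$, together with the identity $\bmu_h\cdot[\nab_{\Gamma_h}v]=\nab_{\Gamma_h}v^+\cdot\bmu_E^++\nab_{\Gamma_h}v^-\cdot\bmu_E^-$ to rewrite the jump as $\nab v^+\cdot\bmu_E^++\nab v^-\cdot\bmu_E^-$, a sum of \emph{bulk} co-normal derivatives of the interpolation error, whence $|\bmu_h\cdot[\nab_{\Gamma_h}v]|\le|\nab v^+|+|\nab v^-|$ pointwise on $E$. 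Cascading \eqref{trace-inequ-3} and \eqref{trace-inequ-1} and inserting the quadratic interpolation bounds then gives $h^{-1/2}\|\bmu_h\cdot[\nab_{\Gamma_h}v]\|_{\calE_h}\lesssim h\|\phi\|_{H^3(\Gamma)}$, which closes the argument. Throughout, the main care lies in tracking the exact powers of $h$, in distinguishing the two treatments of the edge jump (the sharp geometric bound \eqref{251117-4} for the smooth factor $\phi^e$ versus the bulk co-normal estimate for the interpolation error $v$), and in invoking Lemma~\ref{lem:deltaLem} only on bulk seminorms of the smooth factor $\phi^e$.
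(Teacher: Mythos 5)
Your proposal is correct, and its core coincides with the paper's proof: both reduce everything to the single quantity $\tbar{\phi^e-\pi_h\phi^e}_{2h}$ by observing that all the seminorms annihilate the mean-value shift, bound that quantity by the bulk seminorms $h^{-3}\|\nabla e\|_{\calT_h}^2+h^{-1}\|\nabla^2 e\|_{\calT_h}^2+h\|\nabla^3 e\|_{\calT_h}^2+h^3\|\nabla^4 e\|_{\calT_h}^2$ via the trace cascades \eqref{trace-inequ-1}--\eqref{trace-inequ-3}, insert standard quadratic interpolation estimates, and convert to surface norms with Lemma \ref{lem:deltaLem}; the exact-surface term is then absorbed via Lemma \ref{251116-1}. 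The one place you genuinely deviate is the term $h\tbar{\mathring{\pi}_h\phi^e}_h$: the paper bounds it directly through interpolant stability and inverse inequalities, e.g.\ $\|\Delta_{\Gamma_h}\pi_h\phi^e\|_{\calK_h}\lesssim h^{-1/2}\|\nabla^2\pi_h\phi^e\|_{\calT_h}\lesssim h^{-1/2}\|\nabla^2\phi^e\|_{\Omega_h}\lesssim\|\phi\|_{H^2(\Gamma)}$, combined with the equivalence $\tbar{\cdot}_h\approx\|\cdot\|_h$ on $V_h$, to get $\tbar{\pi_h\phi^e}_h\lesssim\|\phi\|_{H^3(\Gamma)}$; you instead pass through the triangle inequality $\tbar{\pi_h\phi^e}_{2h}\le\tbar{\phi^e-\pi_h\phi^e}_{2h}+\tbar{\phi^e}_{2h}$ and must then prove $\tbar{\phi^e}_{2h}\lesssim\|\phi\|_{H^3(\Gamma)}+h\|\phi\|_{H^4(\Gamma)}$, which forces you to invoke the geometric consistency estimate \eqref{251117-4} (with $[\nabla_\Gamma\phi]=0$) to handle the purely geometric edge jump of the smooth function $\phi^e$. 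Both routes close because of the $h$ prefactor; the paper's is leaner, needing no geometric machinery at this stage, while yours has the merit of making explicit, via the tangency identity ${\bf P}_h\bmu_E^{\pm}=\bmu_E^{\pm}$, how the unweighted edge jump of the interpolation error reduces to bulk co-normal derivatives---a step the paper compresses into ``repeated use of the trace inequality.'' One small inaccuracy in a side remark: applying \eqref{251117-4} to the interpolation error would not produce a ``too-large bound $h\|\phi\|_{H^4(\Gamma)}$''; it is simply circular, since it leaves behind the exact-surface jump $h^{-1/2}\|\bmu_\ell\cdot[\nabla_\Gamma(\phi-(\pi_h\phi^e)^\ell)]\|_{\calE_h^\ell}$, which is of the same size as the quantity being estimated. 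This does not affect your argument, since the route you actually take for that term is sound.
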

\begin{proof}

Let $\phi\in H^4(\Gamma)$ and note that $\phi^e\in H^4(\Omega_h)$ due to
the smoothness of $\Gamma$. 
Set $e = \phi^e - {\pi}_h \phi^e$.
By repeated use of the trace inequality \eqref{trace-inequ-1}--\eqref{trace-inequ-3} and standard approximation properties of the Lagrange interpolant, we have
\begin{align*}
\tbar{e}^2_{2h}\lesssim h^{-3}\|\nab e\|_{\calT_h}^2 + h^{-1} \|\nab^2 e\|_{\calT_h}^2 + h \|\nab^3 e\|_{\calT_h}^2 +h^3 \|\nab^4 e\|_{\calT_h}^2 \lesssim h |\phi^e|_{H^3(\Omega_h)}^2+h^3 |\phi^e|_{H^4(\Omega_h)}^2.
\end{align*}
Therefore by \eqref{*}, $\tbar{e}_{2h}\lesssim h \|\phi\|_{H^3(\Gamma)}+h^2 \|\phi\|_{H^4(\Gamma)}$.  Similar arguments,
along with \eqref{eqn:GradEquivBound}, \eqref{eqn:HessEquivBound2}, and \eqref{eqn:LapEquivBound2} yield
$\tbar{e^\ell}_{*, 2h} \lesssim h \|\phi\|_{H^3(\Gamma)}+h^2 \|\phi\|_{H^4(\Gamma)}$.

Next, we use the inverse inequality \eqref{inverse-est-3}, the local $H^2$-stability of the Lagrange interpolant, and \eqref{*} to obtain
\[
\|\Delta_{\Gamma_h} ({\pi}_h \phi^e)\|_{\calK_h}^2
\lesssim h^{-1}\|\nab^2 ({\pi}_h \phi^e)\|_{\calT_h}^2\lesssim h^{-1} \|\nab^2 \phi^e\|_{\Omega_h}^2\lesssim \|\phi\|_{H^2(\Gamma)}^2.
\]
We apply similar arguments in the norm $\|\cdot\|_h$ 
and use norm equivalence to conclude $\tbar{\pi_h \phi^e}_h\lesssim 
\|\pi_h \phi^e \|_h\lesssim \|\phi\|_{H^3(\Gamma)}$.

The estimates \eqref{4.8} now
follow from Lemma \ref{251116-1}:
\begin{align*}
h\tbar{\mathring{\pi}_h \phi^e}_h + \tbar{\phi^e-\mathring{\pi}_h \phi^e}_{2h}+\tbar{\phi-(\mathring{\pi}_h \phi^e)^\ell}_{*,2h}
&\lesssim h\tbar{\mathring{\pi}_h \phi^e}_h + \tbar{\phi^e-\mathring{\pi}_h \phi^e}_{2h}\\
&= h\tbar{{\pi}_h \phi^e}_h + \tbar{\phi^e-{\pi}_h \phi^e}_{2h}\\
&\lesssim h \|\phi\|_{H^3(\Gamma)}+h^2 \|\phi\|_{H^4(\Gamma)}.
\end{align*}
\end{proof}

\begin{theorem}[Error Estimates] \label{H2ErrorThm}
Let $u$ be the exact solution to \eqref{eqn:SBiharmonicProblem}, 
and let $u_h \in V_{h, 0}$ solve the $C^0$ IP method \eqref{disc-prob}. If $u \in H^4(\Gamma)$, then there holds
\begin{align}
\tbar{u^e - u_h}_h
& \lesssim \|f - F_h^\ell\|_{\Gamma} +h\|u\|_{H^4(\Gamma)}, \label{251019-2}
\end{align}
where $F_h^\ell$ is defined by \eqref{FhDef}.
\end{theorem}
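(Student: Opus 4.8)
The plan is to run a second Strang-lemma argument, splitting the error through the modified interpolant and controlling the discrete remainder by coercivity. First I would write
\begin{align*}
u^e - u_h = (u^e - \mathring{\pi}_h u^e) + \xi, \qquad \xi := \mathring{\pi}_h u^e - u_h \in V_{h,0}.
\end{align*}
The interpolation part is disposed of immediately by $\tbar{u^e - \mathring{\pi}_h u^e}_h \le \tbar{u^e - \mathring{\pi}_h u^e}_{2h} \lesssim h\|u\|_{H^3(\Gamma)} + h^2\|u\|_{H^4(\Gamma)}$ from \eqref{4.8}, so it remains to bound $\tbar{\xi}_h \approx \|\xi\|_h$, where I invoke the norm equivalence on $V_{h,0}$ (Lemma \ref{lem:NormEstimates}).

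For the discrete part I would use coercivity \eqref{3.11} and the discrete equation \eqref{disc-prob} to write $\|\xi\|_h^2 \lesssim A_h(\xi,\xi) = A_h(\mathring{\pi}_h u^e,\xi) - l_h(\xi)$, then insert the consistent exact-surface form $a_h^\ell(u,\xi^\ell) = l(\xi^\ell)$ (Remark \ref{rem:Consistent}, valid since $u\in H^4(\Gamma)\subset H^3(\Gamma)$). Using $A_h = a_h + s_h$ this produces the three-way split
\begin{align*}
A_h(\xi,\xi) = \big[a_h(\mathring{\pi}_h u^e,\xi) - a_h^\ell(u,\xi^\ell)\big] + s_h(\mathring{\pi}_h u^e,\xi) + \big[l(\xi^\ell) - l_h(\xi)\big].
\end{align*}
The data term is controlled directly by \eqref{4.6}, giving $\lesssim \|f - F_h^\ell\|_{\Gamma}\,\tbar{\xi}_h \lesssim \|f - F_h^\ell\|_{\Gamma}\|\xi\|_h$. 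For the first bracket I would further insert $a_h^\ell((\mathring{\pi}_h u^e)^\ell,\xi^\ell)$: the difference $a_h(\mathring{\pi}_h u^e,\xi) - a_h^\ell((\mathring{\pi}_h u^e)^\ell,\xi^\ell)$ is the geometric-consistency estimate \eqref{251117-16}, while $a_h^\ell((\mathring{\pi}_h u^e)^\ell - u,\xi^\ell)$ is handled by boundedness \eqref{bdd2}. Chaining the lifting norm equivalence $\tbar{(\cdot)^\ell}_{*,2h}\approx\tbar{\cdot}_{2h}$ (Lemma \ref{251116-1}), the equivalences on $V_{h,0}$ (Lemma \ref{lem:NormEstimates}), the bound $\tbar{(\mathring{\pi}_h u^e)^\ell}_{*,2h}\lesssim \|u\|_{H^4(\Gamma)}$ (triangle inequality with $\tbar{u}_{*,2h}\lesssim\|u\|_{H^4(\Gamma)}$ and the third term of \eqref{4.8}), and the interpolation bound $\tbar{u - (\mathring{\pi}_h u^e)^\ell}_{*,2h}\lesssim h\|u\|_{H^4(\Gamma)}$ yields the whole bracket $\lesssim h\|u\|_{H^4(\Gamma)}\|\xi\|_h$.

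The subtle, and genuinely new, term is the stabilization $s_h(\mathring{\pi}_h u^e,\xi)$, which has no exact-surface counterpart and is therefore a true nonconforming consistency error. The key observation I would use is that $u^e\in H^4(\Omega_h)\hookrightarrow C^2$, so $[\nab u^e] = [\nab^2 u^e] = 0$ across every facet in $\calF_h$ and consequently $s_h(u^e,\xi)=0$. Hence $s_h(\mathring{\pi}_h u^e,\xi) = s_h(\mathring{\pi}_h u^e - u^e,\xi)$, and Cauchy--Schwarz against the facet-jump terms appearing in $\|\cdot\|_h$ bounds this by $\tbar{u^e - \mathring{\pi}_h u^e}_{2h}\,\|\xi\|_h \lesssim h\|u\|_{H^4(\Gamma)}\|\xi\|_h$ via \eqref{4.8}. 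Collecting the three pieces, dividing through by $\|\xi\|_h$, and applying the triangle inequality with the interpolation estimate then gives \eqref{251019-2}.

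I expect the main obstacle to be twofold: recognizing the stabilization contribution as a consistency error that must be absorbed through the smoothness of the exact extension (rather than through any surface integration-by-parts identity), and the careful bookkeeping of which norm is attached to which object — the discrete energy norm $\|\cdot\|_h$ on $V_{h,0}$, the lifted norm $\tbar{\cdot}_{*,2h}$ on $\Gamma$, and the augmented $\tbar{\cdot}_{2h}$ norms — so that the provided equivalence (Lemmas \ref{lem:NormEstimates}, \ref{251116-1}), geometric-consistency \eqref{251117-16}, boundedness \eqref{bdd2}, and interpolation \eqref{4.8} results can be chained without losing a power of $h$.
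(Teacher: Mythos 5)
Your proposal is correct and follows essentially the same route as the paper: a Strang-type argument built from coercivity \eqref{3.11}, the geometric consistency estimate \eqref{251117-16}, the data estimate \eqref{4.6}, and the interpolation bounds \eqref{4.8}, with Lemmas \ref{lem:NormEstimates} and \ref{251116-1} handling the norm bookkeeping. The only differences are cosmetic: the paper invokes Strang's lemma as a black box and applies Lemma \ref{lem:aDiff1} to the pair $(u^e,v)$, whereas you unfold Strang explicitly, route the consistency through $(\mathring{\pi}_h u^e,\xi)$ together with the boundedness \eqref{bdd2}, and make explicit the identity $s_h(u^e,\cdot)=0$ (from $u^e\in H^4(\Omega_h)\hookrightarrow C^2$) that the paper uses silently when $s_h(u^e,v)$ drops out of its consistency computation.
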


\begin{proof}
We write for arbitrary $v\in V_{h,0}$ 
\begin{align*}
A_h(u^e,v) - l_h(v) 
&= a_h(u^e,v)-l_h(v) +s_h(u^e,v)\\
& = l(v^\ell) - l_h(v) +\big(a_h(u^e,v)-a_h^\ell(u,v^\ell)\big),
\end{align*}
where we used the identity $a_h^\ell(u,v^\ell) = l(v^\ell)$ (cf.~Remark \ref{rem:Consistent}).
Applying Lemmas \ref{251116-3}, \ref{lem:aDiff1}, \ref{251116-1} and \ref{lem:NormEstimates} yield
\begin{align*}
A_h(u^e,v) - l_h(v) 
\lesssim \|f-F_h^\ell\|_\Gamma \tbar{v}_h + h \tbar{u}_{*,2h} \tbar{v^\ell}_{*,2h}
\lesssim (\|f-F_h^\ell\|_\Gamma + h \|u\|_{H^4(\Gamma)}) \|v\|_h.
\end{align*}

It then follows from Strang's lemma  that
\begin{align*}
\tbar{u^e - u_h}_h\lesssim  \inf_{v\in V_{h,0}} \tbar{u^e-v}_{h} + \|f-F_h^\ell\|_{\Gamma} + h\|u\|_{H^4(\Gamma)}.
\end{align*}
Finally, we take $v = \mathring{\pi}_h u^e$ and apply Lemma \ref{lem:InterpFun} to obtain the result.
\end{proof}

\begin{corollary}\label{cor:2h}
There holds
\[
\tbar{u^e - u_h}_{2h}\lesssim \|f-F_h^\ell\|_\Gamma + h\|u\|_{H^4(\Gamma)}.
\]
\end{corollary}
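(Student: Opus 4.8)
The plan is to bound $\tbar{u^e - u_h}_{2h}$ by relating it to the already-established energy-norm estimate of Theorem \ref{H2ErrorThm}. The key observation is that $u^e - u_h$ is not itself an element of $V_{h,0}$ (since $u^e$ is the exact extension), but the norm equivalence on $V_{h,0}$ from Lemma \ref{lem:NormEstimates} cannot be applied directly to it. Instead I would split the error through the interpolant $\mathring{\pi}_h u^e \in V_{h,0}$ via the triangle inequality:
\begin{align*}
\tbar{u^e - u_h}_{2h} \le \tbar{u^e - \mathring{\pi}_h u^e}_{2h} + \tbar{\mathring{\pi}_h u^e - u_h}_{2h}.
\end{align*}

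The first term is a pure interpolation error already controlled by Lemma \ref{lem:InterpFun}, giving $\tbar{u^e - \mathring{\pi}_h u^e}_{2h} \lesssim h\|u\|_{H^3(\Gamma)} + h^2\|u\|_{H^4(\Gamma)} \lesssim h\|u\|_{H^4(\Gamma)}$. The second term lies in $V_{h,0}$, so by the norm equivalence of $\tbar{\cdot}_{2h}$ and $\tbar{\cdot}_h$ on $V_{h,0}$ (Lemma \ref{lem:NormEstimates}) I can replace $\tbar{\mathring{\pi}_h u^e - u_h}_{2h}$ by $\tbar{\mathring{\pi}_h u^e - u_h}_h$. Then another triangle inequality $\tbar{\mathring{\pi}_h u^e - u_h}_h \le \tbar{\mathring{\pi}_h u^e - u^e}_h + \tbar{u^e - u_h}_h$ reduces everything to the interpolation error in the $h$-norm (again bounded by Lemma \ref{lem:InterpFun}) plus the main estimate $\tbar{u^e - u_h}_h$ from Theorem \ref{H2ErrorThm}.

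Assembling these pieces, each contribution is controlled by $\|f - F_h^\ell\|_\Gamma + h\|u\|_{H^4(\Gamma)}$, which yields the claimed bound. The only mild subtlety is keeping track of which object lives in $V_{h,0}$ so that the norm equivalence applies, since $\tbar{\cdot}_{2h}$ genuinely dominates $\tbar{\cdot}_h$ and the equivalence holds only on the finite element space, not on $(H^3(\Gamma))^e + V_h$. I expect no serious obstacle here: the corollary is essentially a bootstrapping observation that the stronger $2h$-norm of the discrete error costs nothing extra beyond the energy estimate, precisely because the extra terms in $\tbar{\cdot}_{2h}$ are slaved to $\|\cdot\|_h$ on $V_{h,0}$.
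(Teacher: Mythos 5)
Your proposal is correct and is essentially identical to the paper's own proof: the same splitting through $\mathring{\pi}_h u^e$, the norm equivalence of $\tbar{\cdot}_{2h}$ and $\tbar{\cdot}_h$ on $V_{h,0}$ from Lemma \ref{lem:NormEstimates}, and the reduction to Lemma \ref{lem:InterpFun} plus Theorem \ref{H2ErrorThm}. Your care in applying the equivalence only to the discrete function $\mathring{\pi}_h u^e - u_h \in V_{h,0}$ is exactly the point the paper's argument relies on.
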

\begin{proof}
By Lemmas \ref{lem:NormEstimates} and \ref{lem:InterpFun}, 
as well as Theorem \ref{H2ErrorThm}, we have
\begin{align*}
\tbar{u^e - u_h}_{2h}&\lesssim \tbar{u^e-\mathring{\pi}_h u^e}_{2h}+\tbar{\mathring{\pi}_h u^e-u_h}_h\\
&\lesssim 
\tbar{u^e-\mathring{\pi}_h u^e}_{2h}+\tbar{u^e - u_h}_h\lesssim \|f-F_h^\ell\|_\Gamma + h\|u\|_{H^4(\Gamma)}.
\end{align*}
\end{proof}

\subsection{$L^2$ Estimates}

In this section, we prove
that the error $u^e-u_h$ 
in the $L^2$ norm converges quadratically
with respect to the mesh parameter.
To show this result, we first
extend the result given in \cite[Lemma 3.2]{LarssonLarson17}
to the TraceFEM setting.
\begin{lemma}\label{lem:Phbn}
For $\bchi \in [W^{2, 1}(\Gamma)]^3$ and $h$ small enough, there holds
\begin{align}
\left|\int_{\Gamma_h}({\bf P}_h\bn) \cdot \bchi^e\right|
& \lesssim h^2\|\bchi\|_{W^{2, 1}(\Gamma)}, \label{250728-2} \\
\left|\int_{\Gamma_h}({\bf P}\bn_h) \cdot \bchi^e\right|
& \lesssim h^2\|\bchi\|_{W^{2, 1}(\Gamma)}. \label{250909-2}
\end{align}
\end{lemma}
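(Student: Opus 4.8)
The plan is to prove the first bound \eqref{250728-2} by a surface integration-by-parts argument and then deduce \eqref{250909-2} from it by a purely algebraic identity. For the reduction, observe that $\mathbf P\bn_h + \mathbf P_h\bn = (1-\bn\cdot\bn_h)(\bn+\bn_h)$; since $1-\bn\cdot\bn_h = \tfrac12|\bn-\bn_h|^2 \lesssim h^2$ by \eqref{eqn:GeomApprox}, the vector $\mathbf P\bn_h + \mathbf P_h\bn$ is pointwise $\calO(h^2)$ on $\Gamma_h$. Hence $\big|\int_{\Gamma_h}(\mathbf P\bn_h)\cdot\bchi^e\big| \le \big|\int_{\Gamma_h}(\mathbf P_h\bn)\cdot\bchi^e\big| + h^2\|\bchi\|_{L^1(\Gamma)}$, and \eqref{250909-2} follows once \eqref{250728-2} is in hand.

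For \eqref{250728-2}, the key observation is that the integrand is a surface gradient: since $\bn=\nab d$, we have $\mathbf P_h\bn = \mathbf P_h\nab d = \nab_{\Gamma_h}d$ on each $K\in\calK_h$. I would then integrate by parts element by element. Each $K$ is flat, so its mean curvature vanishes, and $\Gamma_h$ is closed, so it has no boundary edges; applying the surface divergence theorem to $d\,\bchi^e$ on each $K$ and summing gives
\[
\int_{\Gamma_h}(\mathbf P_h\bn)\cdot\bchi^e = -\int_{\Gamma_h} d\,{\rm div}_{\Gamma_h}\bchi^e + \sum_{E\in\calE_h}\int_E d\,\bchi^e\cdot(\bmu_E^+ + \bmu_E^-),
\]
where the edge sum involves only the conormal sum $\bmu_E^+ + \bmu_E^-$ because $d$ and $\bchi^e$ are single-valued across each interior edge.

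The two terms are estimated separately, both using $\|d\|_{L^\infty(\Gamma_h)}\lesssim h^2$ from \eqref{eqn:GeomApprox}. The surface term is controlled by $\|\nab_{\Gamma_h}\bchi^e\|_{L^1(\Gamma_h)}\lesssim\|\bchi\|_{W^{1,1}(\Gamma)}$ (apply \eqref{eqn:GradEquivBound} with $p=1$ componentwise and change variables), so it is $\lesssim h^2\|\bchi\|_{W^{1,1}(\Gamma)}$. The edge term is the crux and the main obstacle, since a naive bound would only give $\calO(h)$. Here I would exploit two facts: first, the conormals of adjacent faces nearly cancel, $\|\bmu_E^+ + \bmu_E^-\| = \|\bt_E\times(\bn_h^+-\bn_h^-)\| \lesssim \|\bn_h^+-\bn_h^-\| \lesssim h$, because both face normals lie within $\calO(h)$ of $\bn$ by \eqref{eqn:GeomApprox}; and second, the skeleton bound $\|\bchi^e\|_{L^1(\calE_h)}\lesssim h^{-1}\|\bchi\|_{W^{2,1}(\Gamma)}$. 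Multiplying the three factors $h^2$ (from $d$), $h$ (from the conormal sum), and $h^{-1}$ (from the skeleton bound) yields exactly $h^2\|\bchi\|_{W^{2,1}(\Gamma)}$, which also explains why the full $W^{2,1}$ regularity is consumed.

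The skeleton bound is proved entirely on the background mesh, which sidesteps the lack of shape-regularity of $\calK_h$: each $E\in\calE_h$ lies in a face $F=\p T^+\cap\p T^-\in\calF_h$, so \eqref{trace-inequ-3} (from $F$ to $E$) followed by \eqref{trace-inequ-1} (from $\p T$ to $T$), both with $p=1$, gives
\[
\|\bchi^e\|_{L^1(E)}\lesssim h^{-2}\|\bchi^e\|_{L^1(T)} + h^{-1}\|\nab\bchi^e\|_{L^1(T)} + \|\nab^2\bchi^e\|_{L^1(T)}.
\]
Summing over $E$ with bounded overlap and invoking the bulk estimate \eqref{250728-1} of Lemma \ref{lem:deltaLem}, which turns each $\|D^\mu\bchi^e\|_{L^1(\Omega_h)}$ into $h\|\bchi\|_{W^{|\mu|,1}(\Gamma)}$, leaves the dominant factor $h^{-1}\|\bchi\|_{W^{2,1}(\Gamma)}$, as needed. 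The delicate points to verify carefully are the vanishing of the boundary/curvature contributions in the elementwise integration by parts (flatness of $K$ and closedness of $\Gamma_h$) and the orientation bookkeeping that produces precisely the conormal sum on each shared edge.
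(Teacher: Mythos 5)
Your proposal is correct and follows essentially the same route as the paper: the same algebraic reduction of \eqref{250909-2} to \eqref{250728-2}, the same elementwise divergence-theorem splitting into a surface term $-\int d\,{\rm div}_{\Gamma_h}\bchi^e$ and an edge term with the conormal sum, and the same $h^2\cdot h\cdot h^{-1}$ accounting via the $p=1$ trace inequalities and Lemma \ref{lem:deltaLem}. The only cosmetic difference is your justification of $|\bmu_E^+ + \bmu_E^-|\lesssim h$ through the cross-product identity $\bmu_E^{\pm}=\pm\,\bt_E\times\bn_h^{\pm}$, whereas the paper compares each discrete conormal with the corresponding exact-surface conormal (which cancel exactly); both arguments are valid.
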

\begin{proof}
First, notice that it is enough to prove \eqref{250728-2} since,
if this inequality holds, 
we can use the identity
\[
{\bf P}\bn_h
= (1 - \bn \cdot \bn_h)(\bn + \bn_h) - {\bf P}_h\bn,
\]
Holder's inequality, \eqref{eqn:GeomApprox}, and \eqref{250921-1} to get
\begin{align*}
\left|\int_{\Gamma_h}({\bf P}\bn_h) \cdot \bchi^e\right|
& = \left|\int_{\Gamma_h}(1 - \bn \cdot \bn_h)(\bn + \bn_h) \cdot \bchi^e - \int_{\Gamma_h}({\bf P}_h\bn) \cdot \bchi^e\right| \\
& \lesssim h^2\|\bchi\|_{L^1(\Gamma)} + h^2\|\bchi\|_{W^{2, 1}(\Gamma)} 
 \lesssim h^2\|\bchi\|_{W^{2, 1}(\Gamma)},
\end{align*}
i.e., \eqref{250909-2} holds. 

To begin the proof of \eqref{250728-2},
we set $\jump{\bmu_h}|_E := \bmu_{\p K}^+ + \bmu_{\p K}^-$ to denote
the jump of the co-normal across the edge $E$,
where $E = \p K^+\cap \p K^-\in \calE_h$.
By the divergence theorem, we have
\begin{equation}\label{eqn:PhnLemmaStart}
\begin{split}
\int_{\Gamma_h}({\bf P}_h\bn) \cdot \bchi^e
& = \int_{\Gamma_h}({\bf P}_h\nab d) \cdot \bchi^e \\
& = -\int_{\calK_h}d\,\nab \cdot ({\bf P}_h\bchi^e) + \int_{\calE_h}d\,\jump{\bmu_h} \cdot \bchi^e \\
& =: I_1 + I_2.
\end{split}
\end{equation}
\textit{Estimate} $I_1$:  Using that $\bn_h$ is piecewise constant, we have $ \nab \cdot ({\bf P}_h\bchi^e) = {\rm div}_{\Gamma_h} \bchi^e$.
Therefore by \eqref{eqn:GeomApprox}, Holder's inequality, and \eqref{eqn:GradEquivBound}
we have
\begin{equation}\label{eqn:PhnI1}
|I_1|
\lesssim h^2\|\nab \cdot ({\bf P}_h\bchi^e)\|_{L^1(\mathcal{K}_h)}
= h^2 \|{\rm div}_{\Gamma_h} \bchi^e\|_{L^1(\calK_h)}
\lesssim h^2\|\bchi^e\|_{W^{1, 1}(\calK_h)}
\lesssim h^2\|\bchi\|_{{W}^{1, 1}(\Gamma)}.
\end{equation}

\textit{Estimate} $I_2$: Applying Holder's inequality again yields
\begin{align}\label{eqn:I2FirstBound}
|I_2|
& \lesssim \|d\|_{L^\infty(\calE_h)}\|\jump{\bmu_h}\|_{L^\infty(\calE_h)}\|\bchi^e\|_{L^1(\p\calK_h)}\lesssim h^3 \|\bchi^e\|_{L^1(\p \calK_h)},
\end{align}
where we used
\begin{align*}
\|d\|_{L^\infty(\calE_h)}
& \lesssim \|d\|_{L^\infty(\Gamma_h)} \lesssim h^2, \\
\|\jump{\bmu_h}\|_{L^\infty(\calE_h)}
& \lesssim \|\bmu_{\p K}^+ - \bmu_{\p K^\ell}^+\|_{L^\infty(\calE_h)} + \|\bmu_{\p K}^- - \bmu_{\p K^\ell}^-\|_{L^\infty(\calE_h)} \lesssim h.
\end{align*}
Thanks to \eqref{trace-inequ-3}, \eqref{trace-inequ-1}, and \eqref{250728-1}, we have
\begin{align*}
\|\bchi^e\|_{L^1(\p\calK_h)}
& \lesssim h^{-1}\|\bchi^e\|_{L^1(\calF_h)} + \|\nab\bchi^e\|_{L^1(\calF_h)} \\
& \lesssim h^{-2}\|\bchi^e\|_{L^1(\calT_h)} + h^{-1}\|\nab\bchi^e\|_{L^1(\calT_h)} + \|\nab^2\bchi^e\|_{L^1(\calT_h)} \\
& \lesssim h^{-1}\|\bchi\|_{L^1(\Gamma)} + \|\bchi\|_{W^{1, 1}(\Gamma)} + h\|\bchi\|_{W^{2, 1}(\Gamma)} \\
& \lesssim h^{-1}\|\bchi\|_{W^{2, 1}(\Gamma)}.
\end{align*}
Applying this estimate to \eqref{eqn:I2FirstBound}
yields  $|I_2| \lesssim h^2\|\bchi\|_{W^{2, 1}(\Gamma)}$.
Combining this inequality with \eqref{eqn:PhnLemmaStart}-- \eqref{eqn:PhnI1}
yields \eqref{250728-2}, which completes the proof.
\end{proof}

Using Lemma \ref{lem:Phbn},
we extend the geometric
consistency estimate \eqref{251117-16}
for functions with higher regularity.

\begin{lemma} \label{250902-2}
For $\psi \in H^4(\Gamma), \phi \in H^3(\Gamma)$, the following integral estimates hold:
\begin{align}
((\Delta_\Gamma\psi)^e, (\Delta_\Gamma\phi)^e - \Delta_{\Gamma_h}\phi^e)_{\calK_h}
& \lesssim h^2\|\psi\|_{H^4(\Gamma)}\|\phi\|_{H^3(\Gamma)}, \label{250728-3} \\
((\{\Delta_\Gamma\psi\})^e, (\bmu_\ell \cdot [\nab_\Gamma\phi])^e - \bmu_h \cdot [\nab_{\Gamma_h}\phi^e])_{\calE_h}
& \lesssim h^2\|\psi\|_{H^4(\Gamma)}\|\phi\|_{H^3(\Gamma)}. \label{250728-4}
\end{align}
\end{lemma}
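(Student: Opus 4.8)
The plan is to handle \eqref{250728-3} and \eqref{250728-4} by the same philosophy: expand the geometric error, observe that all but one contribution is already pointwise $\calO(h^2)$, and extract the extra power of $h$ from the single $\calO(h)$ term either through Lemma \ref{lem:Phbn} or through a tangentiality cancellation.

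For \eqref{250728-3} I would begin with the chain-rule expansion \eqref{LapChain} of $(\Delta_\Gamma\phi)^e-\Delta_{\Gamma_h}\phi^e$. Every summand carrying a factor $d$ or $d^2$ is pointwise $\calO(h^2)$ because $\|d\|_{L^\infty(\Gamma_h)}\lesssim h^2$ by \eqref{eqn:GeomApprox}, and is dispatched by Hölder's inequality and the norm correspondence \eqref{250921-1}. The term $\bn_h^\intercal(\nab_\Gamma^2\phi)^e\bn_h$ is likewise $\calO(h^2)$: since $\nab_\Gamma^2\phi$ is tangential we have $(\nab_\Gamma^2\phi)^e\bn=0$, so it equals $({\bf P}\bn_h)^\intercal(\nab_\Gamma^2\phi)^e({\bf P}\bn_h)$, and $|{\bf P}\bn_h|=|{\bf P}(\bn_h-\bn)|\lesssim h$. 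The only genuinely $\calO(h)$ contribution is $-2(\bn\cdot\bn_h)\bn_h^\intercal{\bf H}(\nab_\Gamma\phi)^e$; using ${\bf H}\bn=0$ it becomes $-2(\bn\cdot\bn_h)\,({\bf P}\bn_h)\cdot[{\bf H}(\nab_\Gamma\phi)^e]$. Here I would replace $\bn\cdot\bn_h$ by $1$ and the ambient ${\bf H}$ by its surface value, each at the cost of an $\calO(h^3)$ remainder, so that pairing with $(\Delta_\Gamma\psi)^e$ leaves exactly $\int_{\Gamma_h}({\bf P}\bn_h)\cdot\bchi^e$ with $\bchi=-2(\Delta_\Gamma\psi)\,{\bf H}\nab_\Gamma\phi$ on $\Gamma$. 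Estimate \eqref{250909-2} of Lemma \ref{lem:Phbn} then bounds this by $h^2\|\bchi\|_{W^{2,1}(\Gamma)}$, and a Leibniz-plus-Cauchy--Schwarz estimate of $\|\bchi\|_{W^{2,1}(\Gamma)}$ (placing at most four derivatives on $\psi$ and three on $\phi$) gives the stated bound.

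The estimate \eqref{250728-4} is the harder one, because a term-by-term Cauchy--Schwarz over $\calE_h$ loses a factor $h^{-1/2}$ per edge trace and cannot reach $h^2$. My plan is therefore to leave the edges by integration by parts. Since $\phi\in H^3(\Gamma)\hookrightarrow C^1(\Gamma)$, the exact jump $[\nab_\Gamma\phi]$ vanishes and $\{\Delta_\Gamma\psi\}=\Delta_\Gamma\psi$ is single valued; using that $\bmu_h\cdot[\nab_{\Gamma_h}\phi^e]$ is precisely the sum of the two one-sided co-normal derivatives, the edge sum equals $\sum_{K}\int_{\p K}(\Delta_\Gamma\psi)^e\,\nab_{\Gamma_h}\phi^e\cdot\bmu_{\p K}$, which the piecewise-flat surface divergence theorem turns into $(\nab_{\Gamma_h}(\Delta_\Gamma\psi)^e,\nab_{\Gamma_h}\phi^e)_{\calK_h}+((\Delta_\Gamma\psi)^e,\Delta_{\Gamma_h}\phi^e)_{\calK_h}$. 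I would then subtract and add the exact-surface counterpart $(\nab_\Gamma\Delta_\Gamma\psi,\nab_\Gamma\phi)_{\calK_h^\ell}+(\Delta_\Gamma\psi,\Delta_\Gamma\phi)_{\calK_h^\ell}$, which equals $\int_\Gamma(\nab_\Gamma\Delta_\Gamma\psi\cdot\nab_\Gamma\phi+\Delta_\Gamma\psi\,\Delta_\Gamma\phi)=0$ by global integration by parts on the closed surface $\Gamma$.

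It then remains to bound two surface differences. After a change of variables producing a harmless factor $1-\mu_h=\calO(h^2)$, the Laplacian pairing $((\Delta_\Gamma\psi)^e,\Delta_{\Gamma_h}\phi^e)_{\calK_h}-(\Delta_\Gamma\psi,\Delta_\Gamma\phi)_{\calK_h^\ell}$ is controlled directly by the already-proven \eqref{250728-3}. For the gradient pairing $(\nab_{\Gamma_h}(\Delta_\Gamma\psi)^e,\nab_{\Gamma_h}\phi^e)_{\calK_h}-(\nab_\Gamma\Delta_\Gamma\psi,\nab_\Gamma\phi)_{\calK_h^\ell}$ I would insert the gradient chain rule \eqref{250914-1}: because both $\nab_\Gamma\Delta_\Gamma\psi$ and $\nab_\Gamma\phi$ are tangential, the leading $({\bf P}_h-{\bf P})$ discrepancy collapses to $-({\bf P}\bn_h\cdot(\nab_\Gamma\Delta_\Gamma\psi)^e)({\bf P}\bn_h\cdot(\nab_\Gamma\phi)^e)$, a product of two $\calO(h)$ factors, so the integrand is pointwise $\calO(h^2)$ and a plain Cauchy--Schwarz over the two-dimensional $\calK_h$ finishes the bound. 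The main obstacle is exactly this second estimate: one must recognize that the edge integral has to be converted to surface integrals before estimating, and that its exact-surface analogue vanishes identically; without these two observations the lossy $h^{-1/2}$ edge traces prevent reaching second order. A secondary, purely technical point is to verify in \eqref{250728-3} that after isolating the ${\bf P}\bn_h$ direction the field $\bchi$ is a genuine extension whose $W^{2,1}(\Gamma)$-norm respects the $\|\psi\|_{H^4(\Gamma)}\|\phi\|_{H^3(\Gamma)}$ budget.
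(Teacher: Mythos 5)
Your argument for \eqref{250728-3} is essentially the paper's own proof: expand via \eqref{LapChain}, observe that the $d$-, $d^2$- and $\bn_h^\intercal(\nab_\Gamma^2\phi)^e\bn_h$-terms are pointwise $\calO(h^2)$, and reduce the single $\calO(h)$ curvature term to Lemma \ref{lem:Phbn} with $\bchi=\Delta_\Gamma\psi\,{\bf H}\nab_\Gamma\phi$; the only cosmetic difference is that the paper rewrites that term exactly as $-({\bf P}_h\bn)\cdot{\bf H}(\nab_\Gamma\phi)^e$ and invokes \eqref{250728-2}, whereas you pass to ${\bf P}\bn_h$ and invoke \eqref{250909-2} (your explicit accounting of the $\calO(h^3)$ cost of evaluating ${\bf H}$ off $\Gamma$ is a detail the paper glosses over).

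For \eqref{250728-4} your route is genuinely different from the paper's, and it is correct. The paper inserts the chain rule \eqref{250914-1} into the edge integral and splits it into a $d{\bf H}$-part, estimated directly on the edges using the co-normal comparison $\|{\bf P}\bmu_h-\bmu_\ell\|_{L^\infty(\p K)}\lesssim h^2$ from \cite{neilan2025c0}, and a projected part, which is converted into element integrals via identities imported from \cite[(C.33)--(C.46)]{LarssonLarson17} and finished with the tangentiality identity \eqref{250909-1} and Lemma \ref{lem:Phbn}. You instead integrate the full product $(\Delta_\Gamma\psi)^e\nab_{\Gamma_h}\phi^e$ by parts on each flat element — legitimate, since $\bmu_h\cdot[\nab_{\Gamma_h}\phi^e]$ is exactly the sum of the two one-sided co-normal derivatives and $(\Delta_\Gamma\psi)^e$ is single-valued — then subtract the exact-surface counterpart, which vanishes by the divergence theorem on the closed surface $\Gamma$. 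This leaves two surface comparisons: the Laplacian pairing, which is precisely \eqref{250728-3} up to a $(1-\mu_h)=\calO(h^2)$ change-of-variables factor, and the gradient pairing, where the tangentiality of $\nab_\Gamma\Delta_\Gamma\psi$ and $\nab_\Gamma\phi$ collapses the ${\bf P}_h$-versus-${\bf P}$ discrepancy to the product of two $\calO(h)$ factors $({\bf P}\bn_h\cdot\bm{a})({\bf P}\bn_h\cdot\bm{b})$, so plain Cauchy--Schwarz suffices. What your version buys: Lemma \ref{lem:Phbn} is needed only once (inside \eqref{250728-3}), the external estimates from \cite{neilan2025c0} and \cite{LarssonLarson17} are avoided entirely, and \eqref{250728-4} becomes a corollary of \eqref{250728-3}; what the paper's version buys is minimal new computation by delegating to those cited results. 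Your diagnosis of why a naive edge-by-edge estimate fails (each edge trace costs $h^{-1/2}$, leaving only $\calO(h)$) matches the limitation of Lemma \ref{251117-1}, and both proofs require the same regularity and land on the same bound $h^2\|\psi\|_{H^4(\Gamma)}\|\phi\|_{H^3(\Gamma)}$.
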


\begin{proof}
\textit{Estimate} \eqref{250728-3}: Recalling \eqref{LapChain}, we have
\begin{align*}
& ((\Delta_\Gamma\psi)^e, (\Delta_\Gamma\phi)^e - \Delta_{\Gamma_h}\phi^e)_{\calK_h} \\
& \lesssim h^2|\psi|_{H^2(\Gamma)}(|\phi|_{H^1(\Gamma)} + |\phi|_{H^2(\Gamma)}) + ((\Delta_\Gamma\psi)^e, (\bn \cdot \bn_h)\bn_h \cdot {\bf H}(\nab_\Gamma\phi)^e)_{\calK_h}.
\end{align*}
Applying \eqref{250728-2} with $\bchi = (\Delta_\Gamma\psi){\bf H}(\nab_\Gamma\phi)$ to the second term leads to
\begin{align*}
& ((\Delta_\Gamma\psi)^e, (\bn \cdot \bn_h)\bn_h \cdot {\bf H}(\nab_\Gamma\phi)^e)_{\calK_h} \\
& = -((\Delta_\Gamma\psi)^e, (\bn - (\bn \cdot \bn_h)\bn_h) \cdot {\bf H}(\nab_\Gamma\phi)^e)_{\calK_h} \\
& = -({\bf P}_n\bn, (\Delta_\Gamma\psi)^e{\bf H}(\nab_\Gamma\phi)^e)_{\calK_h} \\
& \lesssim h^2\|\Delta_\Gamma\psi{\bf H}\nab_\Gamma\phi\|_{W^{2, 1}(\Gamma)} \\
%
& \lesssim h^2\|\psi\|_{H^4(\Gamma)}\|\phi\|_{H^3(\Gamma)},
\end{align*}
which concludes the proof of \eqref{250728-3}. \\
\textit{Estimate} \eqref{250728-4}: The Sobolev embedding $H^2(\Gamma) \hookrightarrow L^\infty(\Gamma)$, and the assumed regularity of $\psi$ and $\chi$, imply $\Delta_\Gamma\psi$ and $\nab_\Gamma\chi$ are continuous, i.e., $\{\Delta\psi\} = \Delta\psi$ and $[\nab_\Gamma\phi] = 0$. Thus, due to \eqref{250914-1} we have
\begin{align}
\begin{split} \label{250830-1}
& ((\{\Delta_\Gamma\psi\})^e, (\bmu_\ell \cdot [\nab_\Gamma\phi])^e - \bmu_h \cdot [\nab_{\Gamma_h}\phi^e])_{\calE_h} \\
& = -((\Delta_\Gamma\psi)^e, \bmu_h \cdot [{\bf P}_h({\bf P} - d{\bf H})(\nab_\Gamma\phi)^e])_{\calE_h} \\
& = ((\Delta_\Gamma\psi)^e, \bmu_h \cdot [{\bf P}_hd{\bf H}(\nab_\Gamma\phi)^e])_{\calE_h} - ((\Delta_\Gamma\psi)^e, \bmu_h \cdot [{\bf P}_h(\nab_\Gamma\phi)^e])_{\calE_h} \\
& =: I_1 + I_2,
\end{split}
\end{align}
which we will estimate separately. \\
\textit{Estimate} $I_1$: By \eqref{trace-inequ-3}, \eqref{trace-inequ-1}, \eqref{250728-1}, and Holder's inequality, we have
\begin{align*}
 \|(\Delta_\Gamma\psi)^e\|_{L^1(\calE_h)} 
& \lesssim h^{-2}\|(\Delta_\Gamma\psi)^e\|_{L^1(\calT_h)} + h^{-1}\|\nab(\Delta_\Gamma\psi)^e\|_{L^1(\calT_h)} + \|\nab^2(\Delta_\Gamma\psi)^e\|_{L^1(\calT_h)} \\
& \lesssim h^{-1}\|\Delta_\Gamma\psi\|_{L^1(\Gamma)} + \|\Delta_\Gamma\psi\|_{W^{1, 1}(\Gamma)} + h\|\Delta_\Gamma\psi\|_{W^{2, 1}(\Gamma)} \\
& \lesssim h^{-1}\|\psi\|_{H^4(\Gamma)},
\intertext{and}
 \|(\nab_\Gamma\phi)^e\|_{L^1(\calE_h)}
& \lesssim h^{-2}\|(\nab_\Gamma\phi)^e\|_{L^1(\calT_h)} + h^{-1}\|\nab(\nab_\Gamma\phi)^e\|_{L^1(\calT_h)} + \|\nab^2(\nab_\Gamma\phi)^e\|_{L^1(\calT_h)} \\
& \lesssim h^{-1}\|\nab_\Gamma\phi\|_{L^1(\Gamma)} + \|\nab_\Gamma\phi\|_{W^{1, 1}(\Gamma)} + h\|\nab_\Gamma\phi\|_{W^{2, 1}(\Gamma)} \\
& \lesssim h^{-1}\|\phi\|_{H^3(\Gamma)}.
\end{align*}
Noting that (cf. \cite[(3.9)]{neilan2025c0})
\[
\|{\bf P}\bmu_h - \bmu_\ell\|_{L^\infty(\p K)} \lesssim h^2,
\]
and so by using
\[
{\bf P}(\bmu_{\p K}^+ + \bmu_{\p K}^-)
= ({\bf P}\bmu_{\p K}^+ - \bmu_{\p K^\ell}^+) + ({\bf P}\bmu_{\p K}^- - \bmu_{\p K^\ell}^-),
\]
\eqref{eqn:GeomApprox}, and the boundedness of ${\bf H}$, it follows that
\begin{align}
\begin{split} \label{250830-2}
I_1
& = ((\Delta_\Gamma\psi)^e, (\bmu_{\p K}^+ + \bmu_{\p K}^-) \cdot (d{\bf H}(\nab_\Gamma\phi)^e))_{\calE_h} \\
& \lesssim h^2(\|{\bf P}\bmu_{\p K}^+ - \bmu_{\p K^\ell}^+\|_{L^\infty(\calE_h)} + \|{\bf P}\bmu_{\p K}^- - \bmu_{\p K^\ell}^-\|_{L^\infty(\calE_h)})\|(\Delta_\Gamma\psi)^e\|_{L^1(\calE_h)}\|(\nab_\Gamma\phi)^e\|_{L^1(\calE_h)} \\
& \lesssim h^2\|\psi\|_{H^4(\Gamma)}\|\phi\|_{H^3(\Gamma)}.
\end{split}
\end{align}
\textit{Estimate} $I_2$: Before estimating this term, we set $\bchi := \Delta_\Gamma \psi \nab_\Gamma \phi$ and observe that $\bchi$ is tangent to $\Gamma$. Thus, $\bchi^e \cdot \bn = 0$ in a neighborhood of $\Gamma$. Consequently, the product rule and the identity ${\bf H} = \Grad\bn$ lead to $0 = \nab(\bchi^e \cdot \bn) = (\Grad\bchi^e)^\intercal\bn + {\bf H}\bchi^e$, which implies
\begin{align}
(\Grad \bchi^e)^\intercal\bn = -{\bf H}\bchi^e. \label{250909-1}
\end{align}
Based on \cite[(C.33)--(C.42)]{LarssonLarson17},
\begin{align}
\begin{split} \label{250830-3}
I_2
& = (({\rm div}_\Gamma\bchi)^e, 1)_{\calK_h} - d({\rm tr}({\bf Z}), 1)_{\calK_h} - ({\bf P}\bn_h, (\Grad\bchi^e)^\intercal\bn_h)_{\calK_h} \\
& =: I_{2,1} + I_{2,2} + I_{2,3},
\end{split}
\end{align}
where ${\bf Z}
= (\Grad_\Gamma\bchi{\bf H})^e + (\nab{\bf P}\bchi{\bf H})^e$, and $\nab{\bf P}\bchi$ is a $3 \times 3$ matrix with entries similar to \eqref{250909-3}. Since $II_1, II_2$ satisfy (cf. \cite[(C.43)--(C.46)]{LarssonLarson17})
\begin{align}
I_{2,1} \lesssim h^2\|\psi\|_{H^3(\Gamma)}\|\phi\|_{H^2(\Gamma)}, \qquad I_{2,2} \lesssim h^2\|\psi\|_{H^3(\Gamma)}\|\phi\|_{H^2(\Gamma)}, \label{250830-4}
\end{align}
we just need to focus on the term $I_{2,3}$. By virtue of \eqref{250909-1}, Holder’s inequality, \eqref{250909-2}, \eqref{trace-inequ-2}, and \eqref{250728-1},
\begin{align*}
I_{2,3}
& = - ({\bf P}\bn_h, (\Grad\bchi^e)^\intercal(\bn_h - \bn))_{\calK_h} - ({\bf P}\bn_h, (\Grad\bchi^e)^\intercal\bn)_{\calK_h} \\
& = - ({\bf P}\bn_h, (\Grad\bchi^e)^\intercal(\bn_h - \bn))_{\calK_h} + ({\bf P}\bn_h, {\bf H}\bchi^e)_{\calK_h} \\
& \lesssim \|{\bf P}\bn_h\|_{L^\infty(\Gamma_h)}\|\Grad\bchi^e\|_{L^1(\Gamma_h)}\|\bn_h - \bn\|_{L^\infty(\Gamma_h)} + h^2\|{\bf H}\bchi\|_{W^{2, 1}(\Gamma)} \\
& \lesssim h^2\|\Grad\bchi^e\|_{L^1(\Gamma_h)} + h^2\|\psi\|_{H^4(\Gamma)}\|\phi\|_{H^3(\Gamma)} \\
& \lesssim h^2(h^{-1}\|\Grad\bchi^e\|_{L^1(\calT_h)} + \|\Grad^2\bchi^e\|_{L^1(\calT_h)}) + h^2\|\psi\|_{H^4(\Gamma)}\|\phi\|_{H^3(\Gamma)} \\
& \lesssim h^2\|\bchi\|_{W^{1, 1}(\Gamma)} + h^3\|\bchi\|_{W^{2, 1}(\Gamma)} + h^2\|\psi\|_{H^4(\Gamma)}\|\phi\|_{H^3(\Gamma)} \\
& \lesssim h^2\|\psi\|_{H^4(\Gamma)}\|\phi\|_{H^3(\Gamma)}.
\end{align*}
Combining the above inequality with \eqref{250830-1}--\eqref{250830-4} completes the proof.
\end{proof}






\begin{lemma}
There holds for $v, w \in H^4(\Gamma)$,
\begin{align}
|a_h^\ell(v, w) - a_h(v^e, w^e)|
& \lesssim h^2\|v\|_{H^4(\Gamma)}\|w\|_{H^4(\Gamma)}. \label{250902-4}
\end{align}
\end{lemma}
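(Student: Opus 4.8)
The plan is to exploit the higher regularity of $v,w\in H^4(\Gamma)$ to first collapse $a_h^\ell(v,w)$ to a single term, and then to bound the remaining discrepancies using the sharpened geometric consistency estimates of Lemma~\ref{250902-2}. The key initial observation is that, by the Sobolev embedding $H^2(\Gamma)\hookrightarrow C^0(\Gamma)$, the quantities $\nab_\Gamma v,\nab_\Gamma w$ (which lie in $H^3$) and $\Delta_\Gamma v,\Delta_\Gamma w$ (in $H^2$) are all continuous across the interior edges $\calE_h^\ell$. Hence $[\nab_\Gamma v]=[\nab_\Gamma w]=0$ and $\{\Delta_\Gamma v\}=\Delta_\Gamma v$ on $\calE_h^\ell$, so every edge contribution in $a_h^\ell$ vanishes and we are left with $a_h^\ell(v,w)=(\Delta_\Gamma v,\Delta_\Gamma w)_{\calK_h^\ell}$. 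Subtracting $a_h(v^e,w^e)$ then produces exactly four contributions: the difference of the two bulk products, the consistency term $(\{\Delta_{\Gamma_h}v^e\},\bmu_h\cdot[\nab_{\Gamma_h}w^e])_{\calE_h}$ and its symmetric partner, and the penalty term $\tfrac{\sigma}{h}(\bmu_h\cdot[\nab_{\Gamma_h}v^e],\bmu_h\cdot[\nab_{\Gamma_h}w^e])_{\calE_h}$. I would bound each by $h^2\|v\|_{H^4(\Gamma)}\|w\|_{H^4(\Gamma)}$.

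For the bulk term, I would change variables via $\int_{K^\ell}=\int_K(\cdot)^e\mu_h$ and write $(\Delta_\Gamma v)^e=\Delta_{\Gamma_h}v^e+\delta_v$ with $\delta_v:=(\Delta_\Gamma v)^e-\Delta_{\Gamma_h}v^e$ (and analogously $\delta_w$). Expanding the product yields one factor $(\mu_h-1)(\Delta_\Gamma v)^e(\Delta_\Gamma w)^e$, controlled by $|\mu_h-1|=\calO(h^2)$ together with \eqref{250921-1}; two mixed terms $((\Delta_\Gamma v)^e,\delta_w)_{\calK_h}$ and $(\delta_v,(\Delta_\Gamma w)^e)_{\calK_h}$, which are precisely the objects bounded in \eqref{250728-3} (applied with $(\psi,\phi)=(v,w)$ and $(w,v)$, respectively); and the quadratic remainder $(\delta_v,\delta_w)_{\calK_h}$, which I would estimate by Cauchy--Schwarz using $\|\delta_v\|_{\calK_h}\lesssim h\tbar{v}_{*,2h}\lesssim h\|v\|_{H^4(\Gamma)}$ from \eqref{251117-2} and Lemma~\ref{251116-1}. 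Each piece is $\calO(h^2)$.

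The edge terms are where the $1/h$ penalty weight makes the estimate delicate, and this is the step I expect to be the \emph{main obstacle}. The resolution is that, since the exact jumps vanish, Lemma~\ref{251117-1} forces the discrete jumps themselves to be one order smaller than generic: applying \eqref{251117-4} with $(\bmu_\ell\cdot[\nab_\Gamma v])^e=0$ gives $\|\bmu_h\cdot[\nab_{\Gamma_h}v^e]\|_{\calE_h}\lesssim h^{3/2}\|v\|_{H^4(\Gamma)}$, and similarly for $w$. Thus the penalty term is $\lesssim h^{-1}\cdot h^{3/2}\cdot h^{3/2}\|v\|_{H^4(\Gamma)}\|w\|_{H^4(\Gamma)}=h^2\|v\|_{H^4(\Gamma)}\|w\|_{H^4(\Gamma)}$ by Cauchy--Schwarz. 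For the consistency term I would replace $\{\Delta_{\Gamma_h}v^e\}$ by $(\{\Delta_\Gamma v\})^e$: the resulting main term $((\{\Delta_\Gamma v\})^e,\bmu_h\cdot[\nab_{\Gamma_h}w^e])_{\calE_h}$ is exactly the quantity bounded in \eqref{250728-4} (using $(\bmu_\ell\cdot[\nab_\Gamma w])^e=0$), giving $\lesssim h^2\|v\|_{H^4(\Gamma)}\|w\|_{H^3(\Gamma)}$, while the correction pairs $\|\{\Delta_{\Gamma_h}v^e\}-(\{\Delta_\Gamma v\})^e\|_{\calE_h}\lesssim h^{1/2}\|v\|_{H^4(\Gamma)}$ (from \eqref{251117-3}) against the $h^{3/2}$ jump bound, again yielding $\calO(h^2)$. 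The symmetric consistency term is handled identically with $v$ and $w$ interchanged. Collecting the four bounds completes the estimate.
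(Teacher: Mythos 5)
Your proposal is correct and follows essentially the same route as the paper: the same four-way pairing of bulk, consistency, and penalty terms, the expansion via $\mu_h-1$ and the differences $(\Delta_\Gamma\cdot)^e-\Delta_{\Gamma_h}(\cdot)^e$ handled by Lemma~\ref{250902-2}, Lemma~\ref{251117-1}, and Lemma~\ref{251116-1}, and the crucial observation that the exact jumps vanish by Sobolev embedding so that \eqref{251117-4} yields the $h^{3/2}$ bound on the discrete jumps needed to absorb the $\sigma/h$ penalty weight. The paper compresses the edge terms into ``similar arguments,'' and your write-up simply supplies the details it omits.
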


\begin{proof}
All terms in $a_h^\ell(v, w) - a_h(v^e, w^e)$ can be paired and rewritten as
\begin{align*}
& |a_h^\ell(v, w) - a_h(v^e, w^e)| \\
& \le \left|(\Delta_{\Gamma}v, \Delta_{\Gamma}w)_{\calK^\ell_h} - (\Delta_{\Gamma_h}v^e, \Delta_{\Gamma_h}w^e)_{\calK_h}\right| \\
& \qquad + \left|(\{\Delta_{\Gamma}v\}, \bmu_{\ell} \cdot [\nab_{\Gamma}w])_{\calE_h^\ell} - (\{\Delta_{\Gamma_h}v^e\}, \bmu_h \cdot [\nab_{\Gamma_h}w^e])_{\calE_h}\right| \\
& \qquad + \left|(\{\Delta_{\Gamma}w\}, \bmu_{\ell} \cdot [\nab_{\Gamma}v])_{\calE_h^\ell} - (\{\Delta_{\Gamma_h}w^e\}, \bmu_h \cdot [\nab_{\Gamma_h}v^e])_{\calE_h}\right| \\
& \quad + \frac{\sigma}{h}\left|(\bmu_{\ell} \cdot [\nab_{\Gamma}v], \bmu_{\ell} \cdot [\nab_{\Gamma}w])_{\calE_h^\ell} - (\bmu_h \cdot [\nab_{\Gamma_h}v^e], \bmu_h \cdot [\nab_{\Gamma_h}w^e])_{\calE_h}\right| \\
& =: I_1 + I_2 + I_3 + I_4.
\end{align*}
By a change of the integration domain, adding and subtracting terms, we can rewrite $I_1$  as
\begin{align*}
I_1
& \lesssim \Big|((\mu_h - 1)(\Delta_{\Gamma}v)^e, (\Delta_{\Gamma}w)^e)_{\calK_h}\Big| \\
& \qquad + \Big|((\Delta_{\Gamma}v)^e - \Delta_{\Gamma_h}v^e, (\Delta_{\Gamma}w)^e)_{\calK_h}\Big| + \Big|((\Delta_{\Gamma}v)^e, (\Delta_{\Gamma}w)^e - \Delta_{\Gamma_h}w^e)_{\calK_h}\Big| \\
& \qquad + \Big| - ((\Delta_{\Gamma}v)^e - \Delta_{\Gamma_h}v^e, (\Delta_{\Gamma}w)^e - \Delta_{\Gamma_h}w^e)_{\calK_h}\Big|.
\end{align*}
We then use $|\mu_h - 1| = \calO(h^2)$, Lemma \ref{250902-2}, Lemma \ref{251117-1}, and Lemma \ref{251116-1} to obtain
\[
I_1
\lesssim h^2\|v\|_{H^4(\Gamma)}\|w\|_{H^4(\Gamma)}.
\]
Note that the Sobolev embedding $H^3(\Gamma) \hookrightarrow C^1(\Gamma)$ implies $(\bmu_\ell \cdot [\nab_\Gamma v])^e = 0$, and therefore the estimates for $I_2, I_3$, and $I_4$ follow similar arguments.
\end{proof}

\begin{lemma} \label{lem:GDiffer}
Let $\phi \in H^4(\Gamma)$ and let $\mathring{\pi}_h\phi^e \in V_{h, 0}$ be its interpolant given in Lemma \ref{lem:InterpFun}. Then there holds
\begin{equation} \label{eqn:GDiffABC}
\big|a_h(u_h, \mathring{\pi}_h\phi^e) - a_h^\ell(u_h^\ell, (\mathring{\pi}_h\phi^e)^\ell)\big|
\lesssim \big(h^2\|f_h\|_{\Gamma_h} + h\|f - F_h^\ell\|_\Gamma + h^2\|u\|_{H^4(\Gamma)}\big)\|\phi\|_{H^4(\Gamma)}.
\end{equation}
\end{lemma}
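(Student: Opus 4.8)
The plan is to exploit bilinearity and split each argument of the two forms into a smooth part and a small remainder, applying the \emph{sharp} geometric consistency estimate \eqref{250902-4} only to the fully smooth contribution and the cruder estimate \eqref{251117-16} to every contribution that carries a remainder. First I would write $\mathring{\pi}_h\phi^e = \phi^e + e_\phi$ with $e_\phi := \mathring{\pi}_h\phi^e - \phi^e$, and, using $(\phi^e)^\ell = \phi$ together with linearity of the lift,
\begin{align*}
a_h(u_h, \mathring{\pi}_h\phi^e) - a_h^\ell(u_h^\ell, (\mathring{\pi}_h\phi^e)^\ell)
&= \big(a_h(u_h, \phi^e) - a_h^\ell(u_h^\ell, \phi)\big)
+ \big(a_h(u_h, e_\phi) - a_h^\ell(u_h^\ell, e_\phi^\ell)\big).
\end{align*}

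For the remainder bracket I would apply \eqref{251117-16} with the pair $(u_h, e_\phi)$, giving a bound $\lesssim h\,\tbar{u_h^\ell}_{*,2h}\,\tbar{e_\phi^\ell}_{*,2h}$. The factor $\tbar{e_\phi^\ell}_{*,2h}\lesssim h\|\phi\|_{H^4(\Gamma)}$ is exactly the interpolation estimate \eqref{4.8}. For the other factor I would use the crude a priori bound coming directly from coercivity \eqref{3.11}, the discrete equation \eqref{disc-prob}, and Lemma \ref{lem:NormEstimates}:
\[
\|u_h\|_h^2 \lesssim A_h(u_h, u_h) = l_h(u_h) = (f_h, u_h)_{\Gamma_h} \le \|f_h\|_{\Gamma_h}\|u_h\|_{\Gamma_h} \lesssim \|f_h\|_{\Gamma_h}\|u_h\|_h,
\]
so that, using the norm equivalences of Lemmas \ref{lem:NormEstimates} and \ref{251116-1}, $\tbar{u_h^\ell}_{*,2h}\approx\tbar{u_h}_{2h}\approx\|u_h\|_h\lesssim\|f_h\|_{\Gamma_h}$. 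This bracket is therefore $\lesssim h^2\|f_h\|_{\Gamma_h}\|\phi\|_{H^4(\Gamma)}$, which is precisely the source of the $h^2\|f_h\|_{\Gamma_h}$ term in the statement.

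For the smooth bracket $a_h(u_h, \phi^e) - a_h^\ell(u_h^\ell, \phi)$ I would split the first argument as $u_h = u^e + e_u$ with $e_u := u_h - u^e$, obtaining
\begin{align*}
a_h(u_h, \phi^e) - a_h^\ell(u_h^\ell, \phi)
&= \big(a_h(u^e, \phi^e) - a_h^\ell(u, \phi)\big) + \big(a_h(e_u, \phi^e) - a_h^\ell(e_u^\ell, \phi)\big).
\end{align*}
The first term is handled by the sharp consistency estimate \eqref{250902-4} (applicable since $u,\phi\in H^4(\Gamma)$), yielding $\lesssim h^2\|u\|_{H^4(\Gamma)}\|\phi\|_{H^4(\Gamma)}$. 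The second term I would bound with \eqref{251117-16}, using $\tbar{\phi}_{*,2h}\lesssim\|\phi\|_{H^4(\Gamma)}$ from Lemma \ref{251116-1} and, for the error factor, $\tbar{e_u^\ell}_{*,2h}\approx\tbar{u^e - u_h}_{2h}\lesssim \|f - F_h^\ell\|_\Gamma + h\|u\|_{H^4(\Gamma)}$ from Lemma \ref{251116-1} and Corollary \ref{cor:2h}; this gives $\lesssim h\|f - F_h^\ell\|_\Gamma\|\phi\|_{H^4(\Gamma)} + h^2\|u\|_{H^4(\Gamma)}\|\phi\|_{H^4(\Gamma)}$. Summing the three contributions produces the asserted bound \eqref{eqn:GDiffABC}.

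The main obstacle, and the reason the two arguments are split asymmetrically, is that a direct application of \eqref{251117-16} to the full difference only produces an $O(h)$ estimate, since both $u_h$ and $\mathring{\pi}_h\phi^e$ are $O(1)$ in their respective energy norms. Quadratic accuracy must be recovered by isolating the single genuinely $O(1)$--$O(1)$ pairing $a_h(u^e, \phi^e) - a_h^\ell(u, \phi)$ and estimating it by the sharper $O(h^2)$ consistency \eqref{250902-4}; every other pairing already contains a factor that is $O(h)$ small (either the interpolation remainder $e_\phi$ or the discretization error $e_u$), so the lossy estimate \eqref{251117-16} is adequate there. The care required is bookkeeping: verifying that each resulting product lands at order $h^2$ or at order $h\|f - F_h^\ell\|_\Gamma$, and checking that $u^e$, $\phi^e$, $e_u$, $e_\phi$ all lie in $W$ so that Lemmas \ref{251116-1} and \ref{lem:aDiff1} genuinely apply.
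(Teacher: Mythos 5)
Your proposal is correct and takes essentially the same approach as the paper: your two-stage splitting produces exactly the same three paired differences that the paper labels $I_1$, $I_2$, $I_3$, namely $a_h(u_h, \mathring{\pi}_h\phi^e - \phi^e) - a_h^\ell(u_h^\ell, (\mathring{\pi}_h\phi^e)^\ell - \phi)$, $a_h(u_h - u^e, \phi^e) - a_h^\ell(u_h^\ell - u, \phi)$, and $a_h(u^e, \phi^e) - a_h^\ell(u, \phi)$. Each piece is then estimated with the same ingredients the paper uses: the crude geometric consistency bound \eqref{251117-16} together with the a priori bound $\tbar{u_h}_{2h}\lesssim\|f_h\|_{\Gamma_h}$, the interpolation estimate \eqref{4.8}, Corollary \ref{cor:2h}, and the sharp $O(h^2)$ estimate \eqref{250902-4} for the fully smooth pairing.
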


\begin{proof}
Adding and subtracting terms yield
\begin{align*}
a_h(u_h, \mathring{\pi}_h\phi^e) - a_h^\ell(u_h^\ell, (\mathring{\pi}_h\phi^e)^\ell)
& = \big(a_h(u_h, \mathring{\pi}_h\phi^e - \phi^e) - a_h^\ell(u_h^\ell, (\mathring{\pi}_h\phi^e)^\ell - \phi)\big) \\
& \qquad + \big(a_h(u_h - u^e, \phi^e) - a_h^\ell(u_h^\ell - u, \phi)\big) \\
& \qquad + \big(a_h(u^e, \phi^e) - a_h^\ell(u, \phi)\big) \\
& =: I_1 + I_2 + I_3.
\end{align*}

Note that by the coercivity of the bilinear form $A_h(\cdot,\cdot)$
and Lemma \ref{lem:NormEstimates}, there holds $\tbar{u_h}_{2h}\lesssim \|u_h\|_h\lesssim \|f_h\|_{\Gamma_h}$.
It follows from \eqref{251117-16}, Lemmas \ref{lem:NormEstimates}--\ref{251116-1}, Lemma \ref{lem:InterpFun},  Corollary \ref{cor:2h} 
and \eqref{250902-4} that
\begin{align*}
I_1
& \lesssim h\tbar{u_h^\ell}_{*, 2h}\tbar{(\mathring{\pi}_h\phi^e)^\ell - \phi}_{*, 2h} 
 \lesssim h\tbar{u_h}_{2h}\tbar{\mathring{\pi}_h\phi^e - \phi^e}_{2h} 
 \lesssim h^2\|f_h\|_{\Gamma_h}\|\phi\|_{H^4(\Gamma)}, \\
I_2
& \lesssim h\tbar{u_h^\ell - u}_{*, 2h}\tbar{\phi}_{*, 2h} 
 \lesssim h\tbar{u_h - u^e}_{2h}\|\phi\|_{H^4(\Gamma)} 
 \lesssim (h\|f - F_h^\ell\|_{\Gamma} + h^2\|u\|_{H^4(\Gamma)})\|\phi\|_{H^4(\Gamma)}, \\
I_3
& \lesssim h^2\|u\|_{H^4(\Gamma)}\|\phi\|_{H^4(\Gamma)},
\end{align*}
which completes the proof.
\end{proof}

\begin{theorem}\label{thm:L2}
Let $u$ be the exact solution, and let $u_h \in V_{h, 0}$ solve the $C^0$ IP method \eqref{disc-prob}. If $u \in H^4(\Gamma)$, then there holds
\begin{align*}
\|u - u_h^\ell\|_{\Gamma}
\lesssim \|f - F^\ell_h\|_{\Gamma} + h^2\|u\|_{H^4(\Gamma)} + h^2\|f_h\|_{\Gamma_h}. 
\end{align*}
\end{theorem}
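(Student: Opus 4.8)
The plan is to run an Aubin--Nitsche duality argument. Write $e := u - u_h^\ell$ and $\bar e := |\Gamma|^{-1}\int_\Gamma e$, and split $\|e\|_\Gamma \le \|e - \bar e\|_\Gamma + \|\bar e\|_\Gamma$. The mean value is harmless: since $\int_\Gamma u = 0$ and $\int_{\Gamma_h} u_h = 0$, the change-of-variables formula together with $|1 - \mu_h| = \calO(h^2)$ gives $|\int_\Gamma u_h^\ell| = |\int_{\Gamma_h}(\mu_h - 1)u_h| \lesssim h^2\|u_h\|_{\Gamma_h} \lesssim h^2\|f_h\|_{\Gamma_h}$, where the last step uses coercivity and Lemma \ref{lem:NormEstimates} to get $\|u_h\|_h \lesssim \|f_h\|_{\Gamma_h}$ and $\|u_h\|_{\Gamma_h} \lesssim \|u_h\|_h$. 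Hence $\|\bar e\|_\Gamma \lesssim h^2\|f_h\|_{\Gamma_h}$, and it remains to control the zero-mean part.

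For $\|e - \bar e\|_\Gamma$, let $\phi \in H^4(\Gamma)$ with $\int_\Gamma\phi = 0$ solve the dual surface biharmonic problem $\Delta_\Gamma^2\phi = e - \bar e$, so that elliptic regularity yields $\|\phi\|_{H^4(\Gamma)} \lesssim \|e - \bar e\|_\Gamma$. Since $\phi$ is smooth while $e \in W^\ell$ is only broken-$H^3$, the integration-by-parts computation behind Remark \ref{rem:Consistent}, now with the smooth factor in the first slot (so $[\nab_\Gamma\phi]=0$ and $\{\Delta_\Gamma\phi\} = \Delta_\Gamma\phi$), gives $\|e - \bar e\|_\Gamma^2 = (\Delta_\Gamma^2\phi, e)_\Gamma = a_h^\ell(\phi, e)$. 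Setting $\phi_h := \mathring{\pi}_h\phi^e \in V_{h,0}$, I would split $a_h^\ell(\phi, e) = a_h^\ell(\phi - \phi_h^\ell, e) + a_h^\ell(\phi_h^\ell, e)$. The approximation term is handled by the boundedness \eqref{bdd2}, the interpolation bound \eqref{4.8} (which yields $\tbar{\phi - \phi_h^\ell}_{*,h} \lesssim h\|\phi\|_{H^4(\Gamma)}$), and the energy estimate $\tbar{e}_{*,h}\approx\tbar{u^e-u_h}_h \lesssim \|f - F_h^\ell\|_\Gamma + h\|u\|_{H^4(\Gamma)}$ from Theorem \ref{H2ErrorThm} and Lemma \ref{251116-1}; together these give $\lesssim (h\|f - F_h^\ell\|_\Gamma + h^2\|u\|_{H^4(\Gamma)})\|\phi\|_{H^4(\Gamma)}$.

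The second term is where the discrete equation enters. Using symmetry of $a_h^\ell$ and consistency $a_h^\ell(u, \phi_h^\ell) = l(\phi_h^\ell)$, then inserting $a_h(u_h, \phi_h) = l_h(\phi_h) - s_h(u_h, \phi_h)$ from \eqref{disc-prob}, I would rewrite
\[
a_h^\ell(\phi_h^\ell, e) = \big(l(\phi_h^\ell) - l_h(\phi_h)\big) + s_h(u_h, \phi_h) - \big(a_h^\ell(u_h^\ell, \phi_h^\ell) - a_h(u_h, \phi_h)\big).
\]
The first bracket is bounded by Lemma \ref{251116-3} together with $\tbar{\phi_h}_h \lesssim \|\phi\|_{H^4(\Gamma)}$, contributing $\|f - F_h^\ell\|_\Gamma\|\phi\|_{H^4(\Gamma)}$, and the last bracket is exactly the geometric-consistency estimate \eqref{eqn:GDiffABC} of Lemma \ref{lem:GDiffer}, which produces the $h^2\|f_h\|_{\Gamma_h}$ and $h^2\|u\|_{H^4(\Gamma)}$ terms. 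Collecting everything gives $\|e - \bar e\|_\Gamma^2 \lesssim (\|f - F_h^\ell\|_\Gamma + h^2\|u\|_{H^4(\Gamma)} + h^2\|f_h\|_{\Gamma_h})\|\phi\|_{H^4(\Gamma)}$, and dividing by $\|\phi\|_{H^4(\Gamma)} \lesssim \|e - \bar e\|_\Gamma$ finishes the estimate.

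The main obstacle is the stabilization cross-term $s_h(u_h, \phi_h)$: a naive Cauchy--Schwarz bound $s_h(u_h,u_h)^{1/2}s_h(\phi_h,\phi_h)^{1/2} \lesssim \|f_h\|_{\Gamma_h}\cdot h\|\phi\|_{H^4(\Gamma)}$ would leave a spurious $O(h)$ term. The resolution is the structural observation that $u^e$ is smooth, so $[\nab u^e] = [\nab^2 u^e] = 0$ and therefore $s_h(u^e, \phi_h) = 0$; this lets me replace $s_h(u_h,\phi_h)$ by $s_h(u_h - u^e, \phi_h)$ and estimate it as $\le s_h(u_h - u^e, u_h - u^e)^{1/2}\,s_h(\phi_h,\phi_h)^{1/2} \lesssim \|u^e - u_h\|_h\,\tbar{\phi^e - \phi_h}_{2h} \lesssim (\|f - F_h^\ell\|_\Gamma + h\|u\|_{H^4(\Gamma)})\,h\|\phi\|_{H^4(\Gamma)}$, which is of the correct order. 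The remaining delicate input, the non-symmetric geometric discrepancy between $a_h$ and $a_h^\ell$ tested against the discrete solution, is already quantified by Lemma \ref{lem:GDiffer}, so I would simply invoke it rather than re-derive it.
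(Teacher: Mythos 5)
Your proposal is correct and follows essentially the same route as the paper's proof: the identical duality argument with the same mean-value splitting, the same decomposition $a_h^\ell(\phi,e)=a_h^\ell(\phi-\phi_h^\ell,e)+a_h^\ell(\phi_h^\ell,e)$, the same insertion of the discrete equation and of Lemma \ref{lem:GDiffer}, and the same smoothness trick $s_h(u^e,\cdot)=s_h(\cdot,\phi^e)=0$ to tame the stabilization cross-term. The only imprecision is the claim $\tbar{e}_{*,h}\approx\tbar{u^e-u_h}_h$: since $u^e-u_h\notin V_{h,0}$, the geometric norm equivalence (Lemma \ref{251116-1}) only gives $\tbar{e}_{*,h}\le\tbar{e}_{*,2h}\lesssim\tbar{u^e-u_h}_{2h}$, so you should invoke Corollary \ref{cor:2h} (rather than Theorem \ref{H2ErrorThm} alone) at that step, exactly as the paper does.
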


\begin{proof}
To prove the estimate we apply a duality argument. For $\psi \in C^0(\Gamma)$ with $\int_\Gamma \psi =0$, let $\phi \in H^2(\Gamma)$ satisfy $\Delta^2_\Gamma\phi = \psi$ and $\int_\Gamma \phi = 0$. By elliptic regularity, $\phi \in H^4(\Gamma)$ with
\begin{align}
\|\phi\|_{H^4(\Gamma)}
& \lesssim \|\psi\|_\Gamma. \label{251117-9}
\end{align}
Furthermore, by consistency of the $C^0$ IP method,
\begin{align}
a_h^\ell(v, \phi)
= (v, \psi)_\Gamma \qquad \forall v\in W^\ell 
\label{251117-10}
\end{align}
We decompose the error as
\begin{align}
\begin{split} \label{251117-11}
u - u_h^\ell
& = \Bigg(u - \left(u_h^\ell - \frac{1}{|\Gamma|}\int_\Gamma u_h^\ell\right)\Bigg) + \Bigg(\frac{1}{|\Gamma_h|}\int_{\Gamma_h}u_h - \frac{1}{|\Gamma|}\int_\Gamma u_h^\ell\Bigg) \\
& =: I_1 + I_2.
\end{split}
\end{align}
Consider $v = \psi = I_1$ in \eqref{251117-10} and note that $\int_\Gamma u_h^\ell / |\Gamma|$ is a constant. Therefore,
\begin{align}
\begin{split} \label{251117-12}
\|I_1\|_\Gamma^2
& = a_h^\ell(I_1, \phi) \\
& = a_h^\ell(u - u_h^\ell, \phi - (\mathring{\pi}_h\phi^e)^\ell) + a_h^\ell(u - u_h^\ell, (\mathring{\pi}_h\phi^e)^\ell) \\
& =: I_{1,1} + I_{1,2}.
\end{split}
\end{align}
Due to the Cauchy-Schwarz inequality, Lemma \ref{251116-1}, 
Lemma \ref{lem:InterpFun}, and Corollary \ref{cor:2h},
\begin{align}
\begin{split} \label{251117-13}
I_{1,1}
& \lesssim \tbar{u - u_h^\ell}_{*, 2h}\tbar{\phi - (\mathring{\pi}_h\phi^e)^\ell}_{*, 2h} \\
& \lesssim \tbar{u^e - u_h}_{2h}\tbar{\phi^e - \mathring{\pi}_h\phi^e}_{2h} \\
&\lesssim (h\|f-F_h^\ell\|_\Gamma+h^2 \|u\|_{H^4(\Gamma)}) \|I_1\|_\Gamma.
\end{split}
\end{align}
By adding and subtracting terms, we rewrite $I_{1,2}$ as
\begin{align}
\begin{split} \label{251117-14}
I_{1,2}
& = a_h^\ell(u, (\mathring{\pi}_h\phi^e)^\ell) - a_h^\ell(u_h^\ell, (\mathring{\pi}_h\phi^e)^\ell) \\
& = l((\mathring{\pi}_h\phi^e)^\ell) - a_h^\ell(u_h^\ell, (\mathring{\pi}_h\phi^e)^\ell) \\
& = l((\mathring{\pi}_h\phi^e)^\ell) - l_h(\mathring{\pi}_h\phi^e) + a_h(u_h, \mathring{\pi}_h\phi^e) - a_h^\ell(u_h^\ell, (\mathring{\pi}_h\phi^e)^\ell) \\
& \qquad + s_h(u_h, \mathring{\pi}_h\phi^e) \\
& = \Big(l((\mathring{\pi}_h\phi^e)^\ell) - l_h(\mathring{\pi}_h\phi^e)\Big) + \Big(a_h(u_h, \mathring{\pi}_h\phi^e) - a_h^\ell(u_h^\ell, (\mathring{\pi}_h\phi^e)^\ell)\Big) \\
& \qquad + s_h(u_h - u^e, \mathring{\pi}_h\phi^e - \phi^e) \\
& =: I_{1,2,1} + I_{1,2,2} + I_{1,2,3}.
\end{split}
\end{align}
It follows from \eqref{4.6}, \eqref{4.8}, and \eqref{251117-9} that
\begin{align}
\begin{split} \label{251117-15}
I_{1,2,1}
\lesssim \|f - F_h^\ell\|_\Gamma\tbar{\mathring{\pi}_h\phi^e}_h
\lesssim \|f - F_h^\ell\|_\Gamma\|\phi\|_{H^4(\Gamma)}
\lesssim \|f - F_h^\ell\|_\Gamma\|I_1\|_\Gamma.
\end{split}
\end{align}
Likewise, by \eqref{eqn:GDiffABC} and \eqref{251117-9}, we have
\begin{align}
I_{1,2,2}
& \lesssim (h\|f - F_h^\ell\|_{\Gamma} + h^2\|u\|_{H^4(\Gamma)})\|I_1\|_\Gamma. \label{251117-17}
\end{align}
Thanks to Cauchy-Schwarz inequality, \eqref{251019-2}, \eqref{4.8}, and \eqref{251117-9},
\begin{align*}
I_{1,2,3}
& \lesssim \tbar{u_h - u^e}_h\tbar{\mathring{\pi}_h\phi^e - \phi^e}_h 
 \lesssim (h\|f-F_h^\ell\|_{\Gamma} + h^2\|u\|_{H^4(\Gamma)})\|I_1\|_\Gamma.
\end{align*}
Combining \eqref{251117-14}--\eqref{251117-17} leads to
\begin{align}
I_{1,2}
& \lesssim (\|f-F_h^\ell\|_{\Gamma} + h^2\|u\|_{H^4(\Gamma)})\|I_1\|_\Gamma, \label{251117-18}
\end{align}
and so by  \eqref{251117-12}, \eqref{251117-13}, and \eqref{251117-18},
we have
\begin{align}
\|I_1\|_\Gamma
& \lesssim  \|f-F^\ell_h\|_{\Gamma} + h^2\|u\|_{H^4(\Gamma)}. \label{251117-19}
\end{align}

Next, the coercivity of $A_h(\cdot,\cdot)$ \eqref{3.11},
the Cauchy-Schwarz inequality, and \eqref{eqn:AllAtOnce} 
yield $\|u_h\|_h\lesssim \|f_h\|_{\Gamma_h}$.
Therefore by \cite[(6.39)]{burman2017cut} and \eqref{eqn:AllAtOnce}, we have
\begin{align}
\|I_2\|_\Gamma
\lesssim \left|\frac{1}{|\Gamma_h|}\int_{\Gamma_h}u_h - \frac{1}{|\Gamma|}\int_\Gamma u_h^\ell\right|
\lesssim h^2\|u_h\|_{\calK_h}
\lesssim h^2\|u_h\|_h
\lesssim h^2\|f_h\|_{\Gamma_h}. \label{251117-21}
\end{align}
Based on \eqref{251117-11}, \eqref{251117-19}, and \eqref{251117-21}, we conclude that
\[
\|u - u_h^\ell\|_\Gamma
\lesssim \|f-F^\ell_h\|_{\Gamma} + h^2\|u\|_{H^4(\Gamma)} + h^2\|f_h\|_{\Gamma_h}.
\]
\end{proof}

\section{Numerical Examples}\label{sec-numerics}
In this section, we perform 
some simple numerical examples to gauge
the theoretical results and determine whether the derived
error estimates are sharp. In the set of numerical examples,
we take $\Gamma$ to be the unit sphere, and take the data $f$
such that the exact solution is given by $u = \exp(x+y^2)\cos(z^3)$.
The resulting expression for $f$ is well-defined on $\bbR^3$,
and we use this expression in the form $l_h(\cdot)$.
In this case, simple arguments show $\|f-F_h^\ell\|_{\Gamma}\lesssim h^2$.
The set of numerical experiments are implemented using
Netgen/NGsolve \cite{ngsolve} with the add-on ngsxfem \cite{LHPvW21}, which supports unfitted
finite element discretizations.

In addition, to the $C^0$ IP method \eqref{disc-prob},
we solve the method with different
bilinear forms $s_h(\cdot,\cdot)$
to determine the minimal about of TraceFEM stabilizations
to ensure well-posedness and convergence of the scheme.
In particular, we consider three $C^0$ IP methods: Find $u_h\in V_{h,0}$ such that
\begin{align}\label{eqn:General-C0IP}
A^{(j)}_h(u_h, v)
& := a_h(u_h, w) + s_h^{(j)}(u_h, v)= l_h(v)\qquad \forall v\in V_{h,0},\quad j=0,1,2,
\end{align}
where $a_h(\cdot,\cdot)$ is given by \eqref{eqn:ahDef}, $s_h^{(0)}(\cdot,\cdot) = s_h(\cdot,\cdot)$
given by \eqref{eqn:shDef} (so that the case $j=0$ yields the scheme \eqref{disc-prob}), and 
\begin{align}
s_h^{(1)}(v, w)
& = \frac{\beta}{h^2}([\nab v], [\nab w])_{\calF_h} + \gamma([\nab^2v], [\nab^2w])_{\calF_h}, \label{5.1} \\
s_h^{(2)}(v, w)
& = \gamma
([\nab^2v], [\nab^2w])_{\calF_h}, \label{5.2}
\end{align}
with $\gamma,\beta>0$. In all numerical experiments, we take $\gamma = \beta = \sigma=10$.

We report the number of degrees of freedom (ndof), $L^2$, $H^1$, and Laplace-Beltrami $L^2$
errors in Tables \ref{tab:MethodJ0}--\ref{tab:MethodJ2}
for a sequence of uniform mesh refinements.
In all three cases, we observe
similar convergence rates across the different norms.
In particular,
both the $L^2$ errors and $H^1$ errors exhibit rates
approaching linear convergence 
with respect to the number of unknowns, which corresponds
to a quadratic rate of convergence with respect to the mesh parameter $h$.
For $j=0$, the observed $L^2$ convergence rates are consistent with
the theoretical results
given in Theorem \ref{thm:L2}. The results in Table \ref{tab:MethodJ2} (the case $j=2$) 
indicate that including the gradient jump in the stabilization $s_h(\cdot,\cdot)$
is not essential to achieve converage of the method.

The convergence behavior of the Laplace-Beltrami $L^2$ error
$\|\Delta_{\Gamma_h}(u^e - u_h)\|_{\calK_h}$ reported in
Tables \ref{tab:MethodJ0}--\ref{tab:MethodJ2}
is less clear. In the case $j=0$, Theorem \ref{H2ErrorThm} predicts
linear convergence of this term with respect to the mesh parameter 
or $\|\Delta_{\Gamma_h}(u^e - u_h)\|_{\calK_h} = O({\rm ndofs}^{-\frac12})$.
However, for all three variants of the $C^0$ IP method
we observe better rates of convergence than those guaranteed
by the theory.


\begin{table}[h]
\centering
\caption{\label{tab:MethodJ0}Errors and rates of convergence for method \eqref{disc-prob},
equivalent to method \eqref{eqn:General-C0IP} with $j=0$.}
\begin{tabular}{c c c c c c c}
\hline
ndof & $\|u^e - u_{h}\|_{\Gamma_{h}}$ & Rate  & $\|\nab_{\Gamma_h}(u^e - u_h)\|_{\Gamma_h}$ & Rate  & $\|\Delta_{\Gamma_h}(u^e - u_h)\|_{\calK_h}$ & Rate \\
\hline
3995	&1.57792	&	    &3.06759	&	     &11.45985	&\\
16802	&0.91473	&-0.380	&1.87263	&-0.344	&7.56334	&-0.289\\
67351	&0.33155	&-0.731	&0.72784	&-0.681	&3.44823	&-0.566\\
269697	&0.09524	&-0.899	&0.21967	&-0.863	&1.24509	&-0.734\\
1079322	&0.02513	&-0.960	&0.05973	&-0.939	&0.42899	&-0.768
\end{tabular}
\end{table}

\begin{table}[h]
\centering
\caption{\label{tab:MethodJ1}Errors and rates of convergence for method \eqref{eqn:General-C0IP} with $j=1$. }
\begin{tabular}{c c c c c c c}
\hline
 ndof & $\|u^e - u_{h}\|_{\Gamma_{h}}$ & Rate  &$\|\nab_{\Gamma_h}(u^e - u_h)\|_{\Gamma_h}$ & Rate &$\|\Delta_{\Gamma_h}(u^e - u_h)\|_{\calK_h}$ & Rate \\
\hline
3995	&2.0102	&	     &3.9875	&	&12.116	&\\
16802	&1.2615	&-0.324	&2.7521	&-0.258	&8.4952	&-0.247\\
67351	&0.5110	&-0.651	&1.2107	&-0.591	&4.0870	&-0.527\\
269697	&0.1552	&-0.859	&0.3869	&-0.822	&1.4776	&-0.733\\
1079322	&0.0418	&-0.945	&0.1069	&-0.927	&0.4931	&-0.791
\end{tabular}
\end{table}

\begin{table}[h]
\centering
\caption{\label{tab:MethodJ2}Errors and rates of convergence for method \eqref{eqn:General-C0IP} with $j=2$. }
\begin{tabular}{c c c c c c c }
\hline
ndof & $\|u^e - u_{h}\|_{\Gamma_{h}}$ & Rate & $\|\nab_{\Gamma_h}(u^e - u_h)\|_{\Gamma_h}$ & Rate & $\|\Delta_{\Gamma_h}(u^e - u_h)\|_{\calK_h}$ & Rate\\
\hline
3995	&1.37164	&	        &2.72862	&	     &11.02039	&\\
16802	&0.73762	&-0.432	     &1.57798	&-0.381	&7.03947	&-0.312\\
67351	&0.2619	      &-0.746	&0.61151	&-0.683	&3.20659	&-0.566\\
269697	&0.07499	&-0.901	&0.18621	&-0.857	&1.17518	&-0.724\\
1079322	&0.0198	    &-0.958	    &0.05112	&-0.932	&0.41288	&-0.754
\end{tabular}
\end{table}

\bibliographystyle{siam}
\bibliography{ref}

\appendix

\section{Proof of Lemma \ref{lem:NormEstimates}}\label{sec:ProofOfNormEstimates}

{\bf Step 1:}
Let ${\bm c} = \lambda_{\Gamma_h}(\nab v)$ and write
\begin{align*}
\|\nab v\|_{\calT_h}^2\lesssim \|\nab v - {\bm c}\|_{\calT_h}^2 +\|{\bm c}\|_{\calT_h}^2.
\end{align*}
Applying the same arguments as
in the proof of Lemma \ref{lem:72} (also see \cite[Lemma 5.3]{burman2017cut}) yields
\begin{align*}
\|{\bm c}\|_{\calT_h}^2 
&\lesssim h \|{\bm c}\|_{\calK_h}^2\lesssim h \|{\bm P_h}{\bm c}\|_{\calK_h}^2
\lesssim h\|\nab_{\Gamma_h} v\|_{\calK_h}^2 + h\|{\bm P_h}{\bm c}-\nab_{\Gamma_h}v\|_{\calK_h}^2
\lesssim h\|\nab_{\Gamma_h} v\|_{\calK_h}^2 + \|{\bm c}-\nab v\|_{\calT_h}^2.
\end{align*}
Thus we find, using \eqref{7.1} and the inverse inequality \eqref{inverse-est-3},
\begin{align}
\begin{split} \label{251110-7}
\|\nab v\|_{\calT_h}^2
& \lesssim \|\nab v-{\bm c}\|_{\calT_h}^2 + h \|\nab_{\Gamma_h} v\|_{\calK_h}^2 \\
& \lesssim h \|\Grad_{\Gamma_h} \nab v\|_{\calK_h}^2 + h^2 \|\nab^2 v\|_{\calT_h}^2 + h^{-1}\|[\nab v]\|_{\calF_h}^2+ h \|\nab_{\Gamma_h} v\|_{\calK_h}^2 \\
& \lesssim \|\nab^2 v\|_{\calT_h}^2 + h^{-1}\|[\nab v]\|_{\calF_h}^2+ h \|\nab_{\Gamma_h} v\|_{\calK_h}^2.
\end{split}
\end{align}
We then conclude from \eqref{lem-7.2} that
 \begin{align}
h\|\nab v\|_{\calT_h}^2
 & \lesssim h^2 \|\Delta_{\Gamma_h} v\|_{\calK_h}^2 + \|[\nab^2 v]\|_{\calF_h}^2+ 
 \|[\nab v]\|_{\calF_h}^2+ h^2 \|\nab_{\Gamma_h} v\|_{\calK_h}^2
 \lesssim \|v\|_{h}^2 + h^2 \|\nab_{\Gamma_h} v\|_{\calK_h}^2.
 \label{251110-5}
 \end{align}

{\bf Step 2:}
We apply \eqref{inverse-est-3}, \eqref{7.1}, and \eqref{251110-5} to obtain
\begin{align*}
\|v\|_{\calK_h}^2 + h\|v\|_{\p \calK_h}^2
\lesssim h^{-1}\|v\|_{\calT_h}^2
\lesssim \|\nab_{\Gamma_h}v\|_{\calK_h}^2 + h\|\nab v\|_{\calT_h}^2\lesssim \|v\|_h^2 +  \|\nab_{\Gamma_h}v\|_{\calK_h}^2,
\end{align*}
where we used the continuity of $v$ on $\Omega_h$.

{\bf Step 3:}
We extend $\bn_h$ by constant on each element
so that for each $T \in \calT_h$, ${\bf P}_{h}$ is constant and $\nab_{\Gamma_h}v$ is well-defined and is a piecewise polynomial with respect to $\calT_h$. By the inverse equalities \eqref{inverse-est-4}, \eqref{inverse-est-2}, the large intersection covering property, and \eqref{F-1}, we have
\begin{align}
\begin{split} \label{251110-3}
h\|\nab_{\Gamma_h}v\|_{\p\calK_h}^2
& \lesssim h^{-1}\|\nab_{\Gamma_h}v\|_{\calT_h}^2
\lesssim h^{-1}\sum_{x \in \calX_h}\|\nab_{\Gamma_h}v\|_{\calT_{h,x}}^2 \\
& \lesssim h^{-1}\sum_{x \in \calX_h}\|\nab_{\Gamma_h}v\|_{T_x}^2 + \|[\nab_{\Gamma_h}v]\|_{\calF_h}^2 + h^2\|[\Grad(\nab_{\Gamma_h}v)\bnu_F]\|_{\calF_h}^2 \\
& =: I + II + III.
\end{split}
\end{align}
We will estimate each term separately.

\textit{Estimate I:} Let $ \bg \in \mathbb{P}_0^{dc}(\calT_h)$ satisfy $\bg|_T = \frac{1}{|T|}\int_T\nab_{\Gamma_h}v$ for all $T \in \calT_h$. A standard Poincare inequality yields
\[
\|\nab_{\Gamma_h}v - \bg\|_T
\lesssim h\|\Grad(\nab_{\Gamma_h}v)\|_T,
\]
and moreover, the inverse inequality \eqref{inverse-est-3} implies
\[
\|\nab_{\Gamma_h}v - \bg\|_K^2
\lesssim h\|\Grad(\nab_{\Gamma_h}v)\|_T^2.
\]
Using these results with the large intersection covering property, we find
\begin{align*}
I
& \lesssim h^{-1}\sum_{x \in \calX_h}(\|\nab_{\Gamma_h}v - \bg\|_{T_x}^2 + \|\bg\|_{T_x}^2)
\lesssim h^{-1}\sum_{x \in \calX_h}(\|\nab_{\Gamma_h}v - \bg\|_{T_x}^2 + h\|\bg\|_{K_x}^2) \\
& \lesssim h^{-1}\sum_{x \in \calX_h}(\|\nab_{\Gamma_h}v - \bg\|_{T_x}^2 + h\|\bg - \nab_{\Gamma_h}v\|_{K_x}^2 + h\|\nab_{\Gamma_h}v\|_{K_x}^2) \\
& \lesssim h\|\Grad(\nab_{\Gamma_h}v)\|_{\calT_h}^2 + \|\nab_{\Gamma_h}v\|_{\calK_h}^2.
\end{align*}
Based on \cite[(2.5)]{neilan2025c0} and notice that $\bn_h$ is piecewise constant, we have
\begin{align}
\Grad(\nab_{\Gamma_h}v)
& = {\bf P}_h\nab^2v. \label{251110-2}
\end{align}
Therefore, the boundedness of ${\bf P}_h$ and \eqref{lem-7.2} lead to
\begin{align}
I
& \lesssim h^2\|\Delta_{\Gamma_h}v\|_{\calK_h}^2 + \|[\nab^2v]\|_{\calF_h}^2 + \|\nab_{\Gamma_h}v\|_{\calK_h}^2\lesssim \|v\|_h^2 + \|\nab_{\Gamma_h}^2 v\|_{\calK_h}^2. \label{251110-4}
\end{align}
\textit{Estimate II:} It follows from the similar arguments as in the proof of \eqref{251110-1} as well as \eqref{251110-5} that
\begin{equation}\label{IIBound}
\begin{split}
II
&\lesssim \|[{\bf P}_h]\{\nab v\}\|_{\calF_h}^2 + \|\{{\bf P}_h\}[\nab v]\|_{\calF_h}^2\\
&\lesssim \|[{\bf P}_h - {\bf P}]\{\nab v\}\|_{\calF_h}^2 + \|[\nab v]\|_{\calF_h}^2
\lesssim h\|\nab v\|_{\calT_h}^2 + \|[\nab v]\|_{\calF_h}^2
\lesssim \|v\|_{h}^2 + h^2 \|\nab_{\Gamma_h} v\|_{\calK_h}^2.
\end{split}
\end{equation}

\textit{Estimate III:} Using \eqref{251110-2}, \eqref{251110-1} and its proof again, also with \eqref{lem-7.2}, we have
\begin{align}
\begin{split} \label{251110-6}
III
& = h^2\|[{\bf P}_h\nab^2v\bnu_F]\|_{\calF_h}^2 \\
& \lesssim h^2(\|[{\bf P}_h\nab^2v]\{\bnu_F\}\|_{\calF_h}^2 + \|\{{\bf P}_h\nab^2v\}[\bnu_F]\|_{\calF_h}^2) \\
& \lesssim h^2\|[{\bf P}_h\nab^2v]\|_{\calF_h}^2 + h^2\|\nab^2v\|_{\p\calT_h}^2 \\
& \lesssim h^3\|\nab^2v\|_{\calT_h}^2 + h^2\|[\nab^2v]\|_{\calF_h}^2 + h\|\nab^2v\|_{\calT_h}^2 \\
& \lesssim h^2\|\Delta_{\Gamma_h}v\|_{\calK_h}^2 + \|[\nab^2v]\|_{\calF_h}^2\lesssim \|v\|_h^2.
\end{split}
\end{align}
Combining \eqref{251110-3}, \eqref{251110-4}, \eqref{IIBound}, and \eqref{251110-6} yields
\begin{equation}
\label{Step3}
h \|\nab_{\Gamma_h} v\|_{\p \calK_h}^2\lesssim \|v\|_h^2 + \|\nab_{\Gamma_h} v\|_{\calK_h}^2.
\end{equation}

{\bf Step 4:}
Combining Steps 1--3, we have shown
\begin{align}\label{eqn:PrelimAllAtOnce}
h\|\nab v\|_{\calT_h}^2+ \|v\|_{\calK_h}^2 +h \|v\|_{\p \calK_h}^2 
+ h \|\nab_{\Gamma_h} v\|_{\p \calK_h}^2 \lesssim \|v\|_h^2 + \|\nab_{\Gamma_h} v\|_{\calK_h}^2.
\end{align}

We integrate by parts, apply the Cauchy-Schwarz inequality, and
the estimate \eqref{eqn:PrelimAllAtOnce} to obtain
\begin{align*}
\|\nab_{\Gamma_h} v\|_{\calK_h}^2  
&= -(\Delta_{\Gamma_h} v,v)_{\calK_h}+([\nab_{\Gamma_h} v]\cdot \bmu_h,v)_{\calE_h}\\
&\lesssim  \|v\|_h \big( \|v\|_{\calK_h} + h^{1/2}\|v\|_{\p \calK_h}\big)\\
&\lesssim  \|v\|_h^2+ \|v\|_h \|\nab_{\Gamma_h} v\|_{\calK_h}.
\end{align*}
Thus, we have 
\begin{equation}\label{Step4}
\|\nab_{\Gamma_h} v\|_{\calK_h}^2  \lesssim \|v\|_h^2.
\end{equation}

{\bf Step 5:}
We use \eqref{inverse-est-2}--\eqref{inverse-est-4} and \eqref{lem-7.2}:
\begin{equation}\label{Step5}
h^2\|\nab_{\Gamma_h}^2v\|_{\calK_h}^2 + h^3\|\nab_{\Gamma_h}^2v\|_{\p\calK_h}^2
\lesssim h\|\nab^2v\|_{\calT_h}^2
\lesssim h^2\|\Delta_{\Gamma_h}v\|_{\calK_h}^2 + \|[\nab^2v]\|_{\calF_h}^2
\lesssim \|v\|_h^2.
\end{equation}
Finally, we combine \eqref{eqn:PrelimAllAtOnce}--\eqref{Step5} to complete the proof.

\end{document}